\numberwithin{equation}{section}
\theoremstyle{plain}
\newtheorem{theorem}{Theorem}[section]
\newtheorem{corollary}[theorem]{Corollary}
\newtheorem{lemma}[theorem]{Lemma}
\newtheorem{proposition}[theorem]{Proposition}
\theoremstyle{definition}
\newtheorem{definition}[theorem]{Definition}
\newtheorem{remark}[theorem]{Remark}
\newtheorem{example}[theorem]{Example}
\theoremstyle{remark}
\newcommand{\R}{\mathbb{R}}
\newcommand{\Q}{\mathbb{Q}}
\newcommand{\Z}{\mathbb{Z}}
\newcommand{\C}{\mathbb{C}}
\newcommand{\h}{\mathbb{H}}
\renewcommand{\H}{\mathbb{H}}
\renewcommand{\P}{\mathbb{P}}
\newcommand{\G}{\Gamma}
\newcommand{\g}{\gamma}
\newcommand{\la}{\lambda}
\def\t{\tau}
\newcommand{\back}{\backslash}
\newcommand{\V}{V(\Q)}
\newcommand{\zxz}[4]{\begin{pmatrix} #1 & #2 \\ #3 & #4 \end{pmatrix}}
\newcommand{\kzxz}[4]{\left(\begin{smallmatrix} #1 & #2 \\ #3 & #4\end{smallmatrix}\right) }
\newcommand{\re}{\operatorname{Re}}
\newcommand{\calE}{\mathcal{E}}
\newcommand{\calF}{\mathcal{F}}
\newcommand{\calI}{\mathcal{I}}
\newcommand{\calM}{\mathcal{M}}
\newcommand{\calQ}{\mathcal{Q}}
\newcommand{\frake}{\mathfrak e}
\newcommand{\eps}{\varepsilon}
\newcommand{\bs}{\backslash}
\newcommand{\CT}{\operatorname{CT}}
\newcommand{\vol}{\operatorname{vol}}
\newcommand{\tr}{\operatorname{tr}}
\newcommand{\sgn}{\operatorname{sgn}}
\newcommand{\erf}{\operatorname{erf}}
\newcommand{\Sl}{\operatorname{SL}}
\newcommand{\SL}{\operatorname{SL}}
\newcommand{\Spin}{\operatorname{Spin}}
\newcommand{\PSL}{\operatorname{PSL}}
\newcommand{\Mat}{\operatorname{Mat}}
\newcommand{\Mp}{\operatorname{Mp}}
\newcommand{\Orth}{\operatorname{O}}
\newcommand{\SO}{\operatorname{SO}}
\newcommand{\Ei}{\operatorname{Ei}}
\newcommand{\Ein}{\operatorname{Ein}}
\newcommand{\erfc}{\operatorname{erfc}}
\newcommand{\Iso}{\operatorname{Iso}}
\newcommand{\length}{\operatorname{length}}
\newcommand{\Stab}{\operatorname{Stab}}
\begin{document}

\title[Regularized theta liftings and periods of modular functions]{Regularized theta liftings and periods \\of modular functions}

\date{\today}

\author[Jan H.~Bruinier, Jens Funke, \"Ozlem Imamoglu]{Jan Hendrik Bruinier, Jens Funke, \"Ozlem Imamoglu}
\thanks{J. Bruinier is partially supported by DFG grant BR-2163/2-2.
J. Funke is partially supported by NSF grant   DMS-0710228.
\"O Imamoglu is partially supported by SNF grant  200021-132514.}
\address{Fachbereich Mathematik,
Technische Universit\"at Darmstadt, Schlossgartenstrasse 7, 64289
Darmstadt, Germany}
\email{bruinier@mathematik.tu-darmstadt.de}

\address{Department of Mathematical Sciences, Durham University, South Road,
Durham, DH1 3LE,
United Kingdom}
\email{jens.funke@durham.ac.uk}

\address{
Departement Mathematik,
ETH Z\"urich,
RŠmistrasse 101,
8092 Z\"urich,
Switzerland}
\email{ozlem@math.ethz.ch}

\begin{abstract}
In this paper, we use regularized theta liftings to construct weak Maass forms weight 1/2 as lifts of weak
Maass forms of weight 0. As a special case we give a new proof of some of recent results of Duke, Toth and Imamoglu
on cycle integrals of the modular $j$ invariant and extend these to
any congruence subgroup. Moreover, our methods allow us to settle the
open question of a geometric interpretation for periods of $j$ along
infinite geodesics in the upper half plane. In particular, we give the
`central value' of the (non-existing) `$L$-function' for $j$. The key to
the proofs is the construction of some kind of a simultaneous Green
function for both the CM points and the geodesic cycles, which is
of independent interest.
\end{abstract}

\maketitle

\tableofcontents

\section{Introduction}



\noindent{\it Generating series of traces of singular moduli}.
For a non-zero integer $d$, let $\calQ_{d}$ be the set of (positive definite if $d<0$) integral binary quadratic forms $Q=[a,b,c]$ of discriminant $d=b^2-4ac$. The natural action of $\Gamma(1)=\PSL_2(\Z)$ divides $\calQ_{d}$ into finitely many classes. For $d<0$ and $Q \in \calQ_{d}$, the root $z_Q=\frac{-b+\sqrt{d}}{2a}$ defines a CM point in the upper half plane $\H$. The values of the classical $j$-invariant at the CM points have been of classical interest. For $f \in M_0^!=\C[j]$, the space of weakly holomorphic functions of weight $0$ for $\Gamma(1)$, 
we define for $d<0$ the modular trace of $f$ by 
\begin{equation}\label{modtrace}
 \tr_d(f)  = \sum_{Q\in \G(1) \back \calQ_{d}} \frac{1}{|\Gamma(1)_Q|}
f(z_Q).
\end{equation}
Here $\Gamma(1)_Q$ denotes the finite stabilizer of $Q$. The theory of such traces has enjoyed renewed interest thanks to the work of Borcherds \cite{Bo1} and Zagier \cite{ZagierTr}, where connections between modular traces, automorphic infinite products and weakly holomorphic modular forms of half-integral weight are
established. In particular, a beautiful theorem of Zagier \cite{ZagierTr} shows for $j_1:= j-744$ that the generating series
\begin{equation}\label{Zagier1}
g_1(\tau): = -q^{-1} +2 + \sum_{d=1}^{\infty}  \tr_d(j_1)q^d= -q^{-1} +2 -248 q^3+492 q^4-4119q^7+ \cdots 
\end{equation}
is a weakly holomorphic modular form of weight $3/2$ for the Hecke subgroup $\G_0(4)$. Here $q = e^{2\pi i\tau}$ with $\tau= u+iv \in \h$.

On the other hand, an older result of Zagier \cite{Zagier}
on the Hurwitz-Kronecker class numbers $H(|d|) =\tr_d(1)$, states that
\begin{equation}\label{Zagier2}
g_0(\tau):= -\frac1{12} +\sum_{d=1}^{\infty}  \tr_d(1) q^d + \frac{1}{16 \pi\sqrt{v}} \sum_{n=- \infty}^{\infty} \beta_{3/2}(4\pi n^2 v) q^{-n^2}
\end{equation}
is a  harmonic  weak Maass form of weight $3/2$ for $\G_0(4)$. Here $\beta_{k}(s) = \int_1^{\infty} e^{-st}  t^{-k} dt$. 

Using the methods of \cite{FCompo}, in \cite{BFCrelle} Bruinier and Funke unified and generalized \eqref{Zagier1} and \eqref{Zagier2} to traces of arbitrary weakly holomorphic modular functions $f$ of weight zero on modular curves $\G \back \h$ for any congruence subgroup $\G$. The results in \cite{BFCrelle} are obtained by considering a theta lift
\begin{equation}\label{thetaintro}
I_{3/2}(\tau,f) = \int_{\G \back \h} f(z) \cdot \Theta_{L}(\tau,z,\varphi_{KM}) \, d\mu(z)
\end{equation}
of $f$ against a theta series associated to an even lattice $L$ of signature $(1,2)$ and the Kudla-Millson Schwartz function $\varphi_{KM}$ of weight $3/2$ \cite{KMI}. Here $d\mu(z) =
 \tfrac{dx\,dy}{y^2}$. The integral converges, since the decay of the theta kernel turns out to be faster than the exponential growth of $f$. For $f=j_1$ and $f=1$ and the appropriate choice of the lattice $L$ one obtains the generating series \eqref{Zagier1} and \eqref{Zagier2} above, while in addition giving a geometric interpretation to the non-positive Fourier coefficients.

 \medskip

\noindent{\it Cycle integrals of modular functions}.
In a different direction, turning to the natural question of the case of positive discriminants, Duke, Imamoglu and Toth \cite{DIT} recently studied the cycle integrals of modular functions as analogs of singular moduli. Their work gives an extention and generalization of the results of Borcherds and Zagier. For $d>0$, the two roots of $Q \in \calQ_{d}$ lie in $\mathbb{P}^1(\R)$, and we let $c_Q$ be the properly oriented geodesic in $\h$ connecting these roots. For non-square $d>0$, the stabilizer $\Gamma(1)_Q$ is infinitely cyclic, and we set $C_Q= \Gamma(1)_Q \back c_Q$. Then $C_Q$ defines a closed geodesic on the modular curve. For $f\in M_0^!$ and in analogy with (\ref{modtrace}) let
\begin{equation}\label{tr+}
 \tr_d(f)= \frac1{2\pi}\sum_{Q\in  \Gamma(1)\back \calQ_{d}} \int_{C_Q}  f(z)\frac{dz}{Q(z,1)}.
\end{equation}
One of the main results of \cite{DIT} realizes the generating series of traces of both the CM values and the cycle integrals for any $f \in M_0^!$ as a form of weight $1/2$. More precisely, for $f=j_1$ we have that
 \begin{equation}\label{DIT1}
h_1(\tau):=\sum_{d>0}\tr_d(j_1)\,q^d+2\sqrt{v} \beta^c_{\frac12}(-4 \pi v) q -8\sqrt{v}
 + 2\sqrt{v}  \sum_{d< 0} {\tr_d(j_1)}
\beta_{\frac12}(4\pi|d|v)q^{d}
 \end{equation}
is a  harmonic weak Maass form of weight 1/2 for $\G_0(4)$. Here $\beta^c_{1/2}(s) = \int_0^1 e^{-st}t^{-1/2} dt$ is the `complementary' function to $\beta_{1/2}(s)$. The analog of \eqref{Zagier2} for $f=1$ is that
\begin{equation}\label{DIT2}
  h_0(\tau):=\sum_{d>0}\tr_d(1)\,q^d+\frac{\sqrt{v}}{3}+2\sqrt{v} \sum_{ d < 0  }{\tr_d(1)}\beta_{\frac12}(4\pi|d|v)q^{d} + \sum_{n \neq 0}
  \alpha(4 n^2 v)q^{n^2}-\frac{1}{4\pi}\log v
  \end{equation}
is a weak Maass form of weight 1/2 for $\G_0(4)$. Here $\alpha(s)= \tfrac{\sqrt{s}}{4\pi} \int_0^{\infty} \log(1+t) e^{-\pi st} t^{-1/2} dt$. 

Duke, Imamoglu and Toth prove their results by first constructing an explicit basis for the space of  harmonic weak Maass forms of weight 1/2 constructed out of Poincar\'e series. The construction of such a basis is quite delicate, due in part to the residual spectrum. Then (\ref{DIT1}) is proved by explicitly computing the cycle integrals of weight zero non-holomorphic Poincar\'e series in terms of exponential sums and then relating these to Kloosterman sums. The construction of $h_0(z)$ is similar using a Kronecker limit type formula for the weight $1/2$ Eisenstein series.

The functions obtained in \cite{Zagier} and \cite{ ZagierTr} and the ones in \cite{DIT} are related via the differential operator $\xi_{1/2} =  2iv^{1/2} \overline{\tfrac{\partial}{\partial \bar{\tau}}}$, which maps forms of weight $1/2$ to the dual weight $3/2$. We have
\begin{equation}\label{intro:xi-rel}
\xi_{1/2}( h_1 )= -2 g_1 \qquad \text{and} \qquad \xi_{1/2}( h_0 )= -2 g_0.
\end{equation}
In this way, the results of Duke, Imamoglu and Toth contain (for $\SL_2(\Z)$) the previous work on modular traces.

\medskip

\noindent{\it The coefficients of square index}.
For {\it square} discriminants $d$, no definition for the modular trace $\tr_d$ in \eqref{tr+} is given in \cite{DIT}, and hence the geometric meaning of the corresponding coefficients $\tr_d(j_1)$ and $\tr_d(1)$ in \eqref{DIT1} and \eqref{DIT2} is left open. Rather, for $d$ a square, these terms represent in \cite{DIT} only the unknown $d$-th Fourier coefficient of the residual Poincar\'e series which define the generating series. Analytically, the Fourier coefficients of square index $d$ are exactly where the weight $1/2$ Poincar\'e series have poles and residual terms have to be subtracted to obtain $h_1$ and $h_0$ (see \cite{DIT} Lemma 3, (2.26) and (2.27)). This makes them rather intractable to compute. Geometrically, the issue is that the stabilizer $\Gamma(1)_Q$ is trivial for square $d$ and hence the cycle $C_Q$ corresponds to an infinite geodesic in the modular curve. Therefore the integral of a modular function over $C_Q$ does not converge. This represents the principle obstacle to a geometric definition of the trace analogous to \eqref{tr+}. In fact, the authors of \cite{DIT} raise the questions whether their results can be approached using theta lifts as in \cite{BFCrelle} and whether one can give more insight to the mysterious nature of the square coefficients.

\medskip

In this paper, we indeed use the theta correspondence to study the traces and periods of modular functions. In particular, we succeed in computing the coefficients of square index in \eqref{DIT1} and \eqref{DIT2} and to give a geometric interpretation for those terms. In fact, we consider the modular traces and periods for any
 (harmonic) weak Maass form on any modular curve $\G \back \h$ defined by a congruence subgroup $\G$.


\medskip

\noindent{\it The central $L$-value of the $j$-invariant}.
We first explain how to define the modular trace for $\G(1)$ for square index $d$. In view of \eqref{tr+} it suffices to regularize the period $\int_{C_Q} f(z)\frac{dz}{Q(z,1)}$ whenever $C_Q$ is an infinite geodesic. We will do this here only when $Q=[0,\sqrt{d},0]$ such that $Q(z,1) = \sqrt{d}z$ and $C_Q$ is (the image of) the imaginary axis. In fact, for $d=1$, we have $C_1=C_Q$. Hence the problem reduces to define the 'central value' of the (non-existing) $L$-function for $f$. Note that if there were  a cusp form $f(z) = \sum_{n>0} a(n) e^{2\pi in z}$ of weight $0$, then the cycle integral of $f$ over $C_Q$ would converge and equal the value of its $L$-function at $s=0$ which is given by
 \begin{equation}\label{cuspformperiod }
 \int_{C_Q}f(z)\tfrac{dz}{z}=2\int_1^\infty f(iy)\tfrac{dy}{y}=2\sum_{n\neq 0}a(n)\int_{2\pi n}^{\infty} e^{-t}\tfrac{dt}{t}.
 \end{equation}
In analogy to this, for $f\in M_0^!$, we define
\begin{equation} \label{d=1}
 \int_{C_Q}^{reg} f (z)\tfrac{dz}{z} :=  2\sum_{n\neq 0} a(n)  \calE\calI (2\pi n),
 \end{equation}
where $\calE\calI (w)$ is related to the exponential integrals defined in \cite{AbSt}, 5.1.1/2 by
\[
\calE\calI (w) := \int_{w}^{\infty} e^{-t} \frac{dt}{t} =
\begin{cases}
E_1(w) & \text{if $w >0$ } \\
-\Ei(-w) & \text{if $w <0 $}.
\end{cases}
 \]
Here in the second case the integral is defined as the Cauchy principal value.

A more geometric characterization for the regularized period is

\begin{theorem}\label{Intro:spec}
Let $f \in M^!_0$ be a modular function with vanishing constant coefficient. Then for $C_Q$ the imaginary axis as above we have for any $T>0$ that
\begin{align*}
\int^{reg}_{C_Q}  f(z) \frac{dz}{z} = 2\int_{i}^T f(z) \frac{dz}{z} - \int^{iT+1}_{iT}
f(z) \left(\psi(z) + \psi(1-z) \right)dz.
\end{align*}
Here $\psi(z)=\frac{\Gamma'(z)}{\Gamma(z)}$ is the Euler Digamma function.
\end{theorem}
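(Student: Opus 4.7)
Starting from the Fourier expansion $f(z)=\sum_{n\neq 0}a(n)e^{2\pi i nz}$ (using $a(0)=0$), I would split the regularized period term by term and compare each contribution $\calE\calI(2\pi n)$ to a cutoff integral at $y=T$. A direct substitution $t=2\pi ny$ gives
$$
\calE\calI(2\pi n)\;=\;\int_{1}^{T}e^{-2\pi ny}\,\frac{dy}{y}\;+\;R_n(T),
$$
where $R_n(T)=E_1(2\pi nT)$ for $n>0$ and $R_n(T)=-\Ei(2\pi|n|T)$ for $n<0$; the principal value in the definition of $\calE\calI$ plays no role at this step because the endpoints $2\pi n$ and $2\pi nT$ lie on the same side of the origin. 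Summing over $n\neq 0$, the convergent integrals assemble into $2\int_{i}^{T}f(z)\,\frac{dz}{z}$, so the theorem reduces to proving
$$
\int_{iT}^{iT+1}\! f(z)\bigl(\psi(z)+\psi(1-z)\bigr)\,dz \;=\; -2\sum_{n\neq 0}a(n)R_n(T).
$$

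To establish this identity I would parametrize the horizontal segment by $z=iT+x$ with $x\in[0,1]$ and exploit the periodicity of $e^{2\pi inx}$ to reduce the matter to computing the Fourier coefficients
$$
c_n \;:=\; \int_0^1 e^{2\pi inx}\bigl(\psi(iT+x)+\psi(1-iT-x)\bigr)\,dx, \qquad n\in\Z.
$$
I would then substitute Gauss's integral representation $\psi(z)=-\gamma+\int_0^\infty\frac{e^{-t}-e^{-zt}}{1-e^{-t}}\,dt$ for both digamma values, interchange the orders of integration, and carry out the $x$-integrals of the exponentials $e^{-(iT+x)t}$ and $e^{-(1-iT-x)t}$ in closed form. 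A fortuitous cancellation of the $(1-e^{-t})$ denominator then occurs for $n\neq 0$, leaving
$$
c_n \;=\; -\int_0^\infty\!\left(\frac{e^{-iTt}}{t-2\pi in}+\frac{e^{iTt}}{t+2\pi in}\right)dt.
$$

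The two remaining integrals are handled by contour deformation: the substitution $u=iT(t\mp 2\pi in)$ converts each into an integral of $e^{-u}/u$ along a vertical ray starting at $\pm 2\pi nT$ on the real axis. For $n>0$ both starting points are positive real, the contours sweep freely onto the positive real axis with vanishing large quarter-arcs in the first/fourth quadrant, and one obtains $c_n=-2e^{2\pi nT}E_1(2\pi nT)$. For $n<0$ the starting points are negative real, and the contours must be deformed across the pole of $e^{-u}/u$ at $u=0$; the two legs of $c_n$ indent the origin on opposite sides so that their $\pm i\pi$ residue contributions cancel, producing $c_n=2e^{-2\pi|n|T}\Ei(2\pi|n|T)$. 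Multiplying by $a(n)e^{-2\pi nT}$ and summing over $n$ reproduces exactly $-2\sum_{n\neq 0}a(n)R_n(T)$, completing the proof. The main technical point is the contour manipulation for $n<0$: the Cauchy principal value built into the definition of $\calE\calI$ must be matched, via the precise cancellation of the two $i\pi$ residues, to the behaviour of $\psi(z)+\psi(1-z)$ across the real axis; the consistency check that $T$-independence of the right-hand side of the theorem holds (both sides differentiate to $0$ in $T$) provides a useful sanity test.
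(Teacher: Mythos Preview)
Your proof is correct, but it proceeds by a genuinely different route than the paper's. The paper (see the proof of Theorem~3.2) argues in two stages: first it shows $T$-independence of the right-hand side directly by Cauchy's theorem together with the recursion $\psi(z+1)=\psi(z)+1/z$, and then it evaluates at one fixed $T$ using the partial-fraction expansion $\psi(w)=-\gamma+\sum_{n\ge 0}\bigl(\tfrac{1}{n+1}-\tfrac{1}{n+w}\bigr)$. Periodicity of $f$ then telescopes $\int_{iT}^{iT+1}f(z)(\psi(z)+\psi(1-z))\,dz$ into the two horizontal half-line integrals $-\int_{iT}^{iT+\infty}f(z)\tfrac{dz}{z}+\int_{iT}^{iT-\infty}f(z)\tfrac{dz}{z}$, which are evaluated term by term via the known formulas \cite[5.1.30/31]{AbSt} for $\int_{ic}^{ic+\infty}e^{2\pi inz}\tfrac{dz}{z}$.

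Your approach instead computes the Fourier coefficients $c_n$ of $\psi(iT+x)+\psi(1-iT-x)$ via Gauss's integral representation and then contour-deforms the resulting $t$-integrals to recover the exponential integrals for every $T$ at once. This is perfectly valid; the cancellation of the $(1-e^{-t})$ denominator and the matching of the $\pm i\pi$ half-residues for $n<0$ both check out. What each buys: the paper's argument is more elementary (no contour deformation, only series manipulations and one citation to \cite{AbSt}), while yours is more self-contained in that it rederives the relevant exponential-integral identities and handles all $T$ uniformly without a separate independence step. One minor point worth tightening in your write-up: the interchange of the $x$- and $t$-integrals after inserting Gauss's formula is not absolutely convergent, so you should justify it (e.g., insert a factor $e^{-\varepsilon t}$, interchange, and let $\varepsilon\to 0$, or restrict to $x\in[\delta,1-\delta]$ where the convergence is uniform and then pass to the limit).
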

This formula is strikingly similar to the one in \cite{FMspec}, Lemma 4.3 and Theorem 4.4, where the critical $L$-values of a modular form (not necessarily cuspidal) are interpreted as cohomological periods of holomorphic $1$-forms with values in a local system over certain closed `spectacle' cycles. We will discuss an analogous cohomological interpretation of Theorem~\ref{Intro:spec} elsewhere.

For $T=1$ and using that $j_1$ has real Fourier coefficients we obtain the following beautiful formula for the regularized integral of $j_1$ along the imaginary axis. We have
 \begin{equation}\label{jLvalue}
 \int_{C_Q}^{reg} j_1(z)\frac{dz}{z}= -2\Re\left(\int_i^{i+1} j_1(z)\psi(z) dz \right).
 \end{equation}
In fact, this formula was first suggested to us by D. Zagier, who obtained \eqref{jLvalue} based on heuristic arguments and verified numerically that defining $\tr_1(j_1)$ using \eqref{jLvalue} gives the correct value for $\tr_1(j_1)$ in \eqref{DIT1}.

\medskip

\noindent{\it The main result}.
Before we describe the theta lift we employ, we first state our main result in a special case. Let $p$ be a prime (or $p=1$). We consider the set $\calQ_{d,p}$ of quadratic forms $[a,b,c]\in \calQ_{d}$ such that $a\equiv 0\pmod{p}$. The group $\G_0^{\ast}(p)$, the extension of the Hecke group $\Gamma_0(p)\subset\Gamma(1)$ with the Fricke involution $W_p=\kzxz{0}{-1}{p}{0}$, acts on $\calQ_{d,p}$ with finitely many orbits. Let $f \in M^!_0(\Gamma^{\ast}_0(p))$ be a weakly holomorphic modular function of weight $0$ for $\Gamma^{\ast}_0(p)$.
We define the modular trace of $f$ of index $d\ne 0$ by
\begin{equation}\label{modtracecm}
\tr_d(f) =
\begin{cases}
\sum_{Q\in \Gamma^{\ast}_0(p) \back \calQ_{d,p}} \frac{1}{|\Gamma^{\ast}_0(p)_Q|}
f(\alpha_Q) & \text{if $d<0$}, \\
\frac{1}{2\pi} \sum_{Q\in \Gamma^{\ast}_0(p)\back \calQ_{d,p}}  \int^{reg}_{\Gamma^{\ast}_0(p)_Q \back c_Q} f(z)\tfrac{dz}{Q(z,1)} & \text{if $d>0$}.
\end{cases}
\end{equation}

 \begin{theorem}\label{th:intro}
Let $f(z)=\sum_{n\gg-\infty}a(n)e(nz)  \in M^!_0(\Gamma^{\ast}_0(p))$ with $a(0)=0$. Then
 \begin{multline*}\label{H-intro}
 H(\tau,f) :=
\sum_{d>0}\tr_d(f)\,q^d+2\sqrt{v}\sum_{m>0}\sum_{n<0}  a(mn) \beta^c_{\frac12}(-4 \pi m^2v) q^{m^2} -2\sqrt{v} \tr_{0}(f) \\ + 2\sqrt{v}  \sum_{d< 0} {\tr_d(f)}
\beta_{\frac12}(4\pi|d|v)q^{d}
\end{multline*}
is a  harmonic weak Maass form of weight $1/2$ for $\Gamma_0(4p)$. Here
\[
\tr_{0}(f) = -\frac{1}{2\pi}  \int_{\Gamma^{\ast}_0(p) \back \h}^{reg} f(z) d\mu(z) = 4   \sum_{n > 0} a(-n)\sigma_1(n).
\]
is the suitably regularized average value of $f$ on $\Gamma^{\ast}_0(p) \back \h$.
\end{theorem}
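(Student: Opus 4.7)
The plan is to realize $H(\tau,f)$ as a regularized theta lift, in parallel with the weight $3/2$ lift of \cite{BFCrelle}, but with a Schwartz function of weight $1/2$ in place of the Kudla--Millson form. For the signature $(1,2)$ quadratic space $V$ attached to $\Gamma^{\ast}_0(p)$ and its lattice $L$, one uses the Millson companion $\varphi_M \in S(V(\R))$ to $\varphi_{KM}$, characterized by the fact that its theta kernel $\Theta_L(\tau,z,\varphi_M)$ is $\Gamma^{\ast}_0(p)$-invariant in $z$ and transforms with weight $1/2$ in $\tau$ under the Weil representation of $\Gamma_0(4p)$. The compatibility $\xi_{1/2}\Theta_L(\tau,z,\varphi_M) \propto \Theta_L(\tau,z,\varphi_{KM})$ is what links this construction to \eqref{intro:xi-rel} and forces the exact shape of $H$ relative to the weight $3/2$ lift.

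Define
\[
I_{1/2}(\tau,f) := \int^{reg}_{\Gamma^{\ast}_0(p)\bs\h} f(z)\,\Theta_L(\tau,z,\varphi_M)\,d\mu(z),
\]
regularized in Borcherds' fashion: truncate the fundamental domain at height $T$ in each cusp, insert a convergence factor $y^{-s}$, analytically continue to $s=0$, take the limit $T\to\infty$, and extract the constant term. Since $\varphi_M$ does not decay on the majorant of $L$ as strongly as $\varphi_{KM}$, and since $f$ has exponential growth at every cusp, this regularization is genuinely needed. Modular invariance of $I_{1/2}(\tau,f)$ in $\tau$ for $\Gamma_0(4p)$ follows from the Weil representation, and weight $1/2$ harmonicity follows from the fact that $\Theta_L(\tau,z,\varphi_M)$ is annihilated by the weight $1/2$ $\tau$-Laplacian, which commutes with the regularized $z$-integration.

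One then computes the Fourier expansion of $I_{1/2}(\tau,f)$ by unfolding orbit-by-orbit. For $d\ne 0$, only vectors $\lambda\in L'$ of norm proportional to $-d/4p$ contribute to the $q^d$-coefficient. If $d<0$ such $\lambda$ have positive norm with finite cyclic stabilizer, the inner integral localizes at the CM point $\alpha_Q$, and summing over orbits produces the expected $2\sqrt{v}\tr_d(f)\beta_{\frac12}(4\pi|d|v)q^d$. If $d>0$ is non-square, $\lambda$ has negative norm with infinite cyclic stabilizer, and unfolding to $\Gamma^{\ast}_0(p)_Q\bs c_Q$ yields the closed cycle integral $\tr_d(f)q^d$.

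The decisive step is the handling of $d>0$ square and of $d=0$, where the orbit stabilizers become trivial or parabolic and the relevant geodesics run into cusps, so naive unfolding diverges. One must prove that the Borcherds regularization of $I_{1/2}(\tau,f)$ reproduces the principal-value regularization \eqref{d=1}: after reducing to $Q=[0,\sqrt{d},0]$, the analytic continuation at $s=0$ combined with the limit $T\to\infty$ should deliver precisely the expression of Theorem~\ref{Intro:spec}, together with the non-holomorphic companion $2\sqrt{v}\sum_{n<0} a(mn)\beta^c_{\frac12}(-4\pi m^2 v)q^{m^2}$ arising from the interaction of the cusp truncation with the isotropic Fourier modes of $\Theta_L(\tau,z,\varphi_M)$. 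For $d=0$ the same mechanism identifies the constant term $-2\sqrt{v}\tr_0(f)$ with the regularized average of $f$, and the closed-form identity $\tr_0(f)=4\sum_{n>0}a(-n)\sigma_1(n)$ follows from evaluating the regularized integral against the Fourier expansion of a weight $2$ non-holomorphic Eisenstein series via a residue argument. The emergence of the digamma kernel $\psi(z)+\psi(1-z)$ of Theorem~\ref{Intro:spec} out of the $s$-expansion of the truncated theta integral near the cusps will be the main technical obstacle.
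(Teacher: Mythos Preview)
Your overall strategy---realize $H(\tau,f)$ as a regularized weight $1/2$ theta lift and read off the Fourier coefficients---is the paper's strategy, and the Schwartz function you call $\varphi_M$ is in the paper simply the standard Gaussian $\varphi_0$ on the signature $(2,1)$ space. But your argument for harmonicity is wrong: the kernel $\Theta_L(\tau,z,\varphi_0)$ is \emph{not} annihilated by $\Delta_{1/2,\tau}$. What one has instead is the intertwining relation $\Delta_\tau\varphi_0=\tfrac14\Delta_z\varphi_0$ (Lemma~\ref{Laplace}), and harmonicity of the lift follows only after moving $\Delta_z$ across the regularized integral via Stokes' theorem; the boundary contribution is a linear combination of unary theta series weighted by the constant terms $a_\ell(0)$ of $f$ at the cusps (Theorem~\ref{thm:lapl}, Proposition~\ref{prop:reg3}). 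This is exactly where the hypothesis $a(0)=0$ enters, and without it $H(\tau,f)$ is not harmonic.

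The more substantial gap is in the Fourier computation. Your phrases ``the inner integral localizes at the CM point'' and ``unfolding to $\Gamma^*_0(p)_Q\backslash c_Q$ yields the closed cycle integral'' are not justified by any direct unfolding: for a single $X$ with $Q(X)\neq 0$ there is no obvious way to evaluate $\int_{\Gamma_X\backslash D} f(z)\varphi_0(X,\tau,z)\,d\mu(z)$ in closed form. The paper's key device is an explicit Green function $\eta(X,\tau,z)$ satisfying $dd^c\eta=\varphi_0\,d\mu$, with a logarithmic singularity at $D_X$ when $Q(X)<0$ and a jump of $\partial\eta$ across $c_X$ when $Q(X)>0$ (Section~\ref{sec:Green}). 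The current equations for $\eta$ are what convert the orbital integrals into CM values and cycle integrals. For square $d$, the same $\eta$ is used: one writes $\int_{M_T}\sum_\gamma f\varphi_0=\text{(cycle piece)}+\int_{\partial M_T}f\sum_\gamma\partial\eta+\cdots$, and the asymptotics of the boundary term as $T\to\infty$ are computed via Poisson summation on $\sum_\gamma\partial\eta$, which is precisely where the digamma function and the complementary trace $\tr^c$ emerge (Lemma~\ref{l:key-formula}, Proposition~\ref{key-split-hyperbolic}). Without constructing $\eta$ or an equivalent tool, your outline has no mechanism to carry out any of the coefficient computations, including the non-square ones.
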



For $p=1$ and $f=j_1$ we recover $h_1(\tau)$. However, now with explicit geometric formulas for the square coefficients in the generating series. For $f=1$ we have a similar theorem which generalizes $h_0$.
The statements for any congruence subgroup are formulated in terms of quadratic spaces of signature (2,1).

\medskip

\noindent{\it The regularized theta lift}.
To prove our results we study the theta integral
\begin{equation}\label{theta0}
I_{1/2}(\tau,f) = \int_{\G \back \h} f(z) \cdot \Theta_{L}(\tau,z,\varphi_0) \, d\mu(z).
\end{equation}
Here the kernel function $ \Theta_{L}(\tau,z,\varphi_0)$ is the Siegel theta series associated to the standard Gaussian $\varphi_0$ of weight $1/2$ for a rational quadratic space of signature $(2,1)$. It is related to the kernel of \eqref{thetaintro} via
\begin{equation}\label{intro:xi-0KM}
\xi_{1/2}( \Theta_{L}(\tau,z,\varphi_0))=  -\frac1{\pi} \Theta_{L}(\tau,z,\varphi_{KM}),
\end{equation}
and we obtain formally the same relation for the theta lifts $I_{1/2}$ and $I_{3/2}$, which matches (up to a constant) the relations given in \eqref{intro:xi-rel}. When $f$ is a Maass cusp form the integral (\ref{theta0}) converges, and this lift has been previously studied by Maass \cite{Maass}, Duke \cite{Duke}, and Katok and Sarnak \cite{KS} among others. However, the theta kernel $\Theta_{L}(\tau,z,\varphi_0)$ in contrast to the one used for $I_{3/2}$ in \cite{BFCrelle} is now moderately {\it increasing}. Hence when the input function $f$ is not a cusp form, the integral does not converge (even for $f=1$) and has to be regularized.

We analyze in detail two different approaches
to regularize \eqref{theta0} for any weak Maass form $f(z)$ of weight $0$ for $\G$ with eigenvalue $\la$ under the Laplace operator $\Delta_z$. The case $\la=0$ is the most interesting, which we now describe. First, following an idea of Borcherds \cite{Bo1} and Harvey-Moore \cite{HM}, we regularize the integral by integrating over a truncated fundamental domain $\calF_T$ for $\G \back \h$ and taking a limit. More precisely, for a complex variable $s$ we consider
\begin{equation}\label{intro:reg1}
 \lim_{T \to \infty} \int_{\calF_T} f(z) \cdot \Theta_{L}(\tau,z,\varphi_0) \,y^{-s} d\mu(z),
\end{equation}
where $\calF_T$ is a suitable truncated fundamental domain for $\Gamma$.
For the real part of $s$ sufficiently large, the limit converges and admits a meromorphic continuation to the whole $s$-plane. Then we regularize \eqref{theta0} by taking the constant term in the Laurent expansion of \eqref{intro:reg1} at $s=0$.

The second approach uses differential operators in the spirit of the regularized Siegel-Weil formula of Kudla and Rallis \cite{KR}. Using Eisenstein and Poincar\'e series of weight $0$ one can construct a `spectral deformation' of $f$, that is, a family of functions $f_s(z)$ such that $ \Delta_z f_s = s(1-s)f$ and $f_1=f$, and then we consider
\begin{equation}\label{intro:reg2}
\frac{1}{s(1-s)}  \int_{\G \back \h} f_s(z) \cdot \Delta_z \Theta_{L}(\tau,z,\varphi_0) \,d\mu(z).
\end{equation}
The point is that $ \Delta_z \Theta_{L}(\tau,z,\varphi_0)$ is of very rapid decay (like the kernel for \eqref{thetaintro}) and hence the integral converges. Furthermore, by the adjointness of the Laplace operator we see that \eqref{intro:reg2} formally equals \eqref{theta0}. Then we can regularize \eqref{theta0} to be the constant term in the Laurent expansion of \eqref{intro:reg2} at $s=1$. We show

\begin{theorem}
Let $f$ be a  harmonic weak Maass form of weight $0$ for a congruence subgroup $\G$. If the constant terms of the Fourier expansion of $f$ at all cusps of $\G$ vanish, then the two regularizations of \eqref{theta0} coincide. Otherwise, they differ by an explicit linear combination of holomorphic unary Jacobi theta series of weight $1/2$.
\end{theorem}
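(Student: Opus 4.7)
The strategy is to compare both regularizations by reducing each to a common truncated integral $\int_{\mathcal{F}_T} f \cdot \Theta_L(\tau,z,\varphi_0)\,d\mu(z)$ plus explicit cuspidal contributions, and then matching those contributions. For the second regularization, the natural tool is Green's formula applied on a truncated fundamental domain $\mathcal{F}_T$. Since $\Delta_z f_s = s(1-s) f_s$ and $\Delta_z$ is self-adjoint modulo boundary terms along $\partial \mathcal{F}_T$, one obtains
\[
\int_{\mathcal{F}_T} f_s \cdot \Delta_z \Theta_L(\tau,z,\varphi_0)\, d\mu(z) \;=\; s(1-s) \int_{\mathcal{F}_T} f_s \cdot \Theta_L(\tau,z,\varphi_0)\, d\mu(z) + B_T(\tau,s),
\]
where $B_T(\tau,s)$ is a sum, over the cusps $\kappa$ of $\Gamma$, of integrals along the horocycle at height $T$. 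Because $\Delta_z \Theta_L$ decays rapidly at every cusp by \eqref{intro:xi-0KM}, the left-hand side converges as $T \to \infty$ to the integral that defines the interior of the second regularization. Dividing by $s(1-s)$ and taking the constant term at $s=1$ thus expresses $\Phi^{(2)}(\tau,f)$ as the $s=1$ constant term of a truncated bulk integral $\int_{\mathcal{F}_T} f_s \Theta_L d\mu$ plus the boundary correction $B_T(\tau,s)/(s(1-s))$, up to errors vanishing as $T\to\infty$.

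For the first regularization, the insertion of $y^{-s}$ isolates the cuspidal divergence directly. At each cusp $\kappa$, Fourier-expand $f$ and $\Theta_L$ in the local $x$-coordinate. The $x$-constant part of $f$ has the shape $a_\kappa + b_\kappa y + O(e^{-cy})$, while Poisson summation applied to the splitting of $L$ along the isotropic line defining $\kappa$ yields
\[
\Theta_L(\tau,z,\varphi_0) \;=\; c_\kappa\, y^{1/2}\, \theta_\kappa(\tau) \;+\; (\text{higher $x$-modes of exponential decay in } y),
\]
where $\theta_\kappa(\tau)$ is a holomorphic unary theta series of weight $1/2$ attached to the positive-definite rank-$1$ quotient at $\kappa$ and $c_\kappa$ is an explicit scalar. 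The nonconstant $x$-modes pair with the principal part of $f$ in an absolutely convergent way, so the divergence of $\int_{\mathcal{F}_T} f\cdot\Theta_L\cdot y^{-s}\,d\mu$ as $T\to\infty$ comes entirely from the constant-constant pairing at each cusp, which contributes integrals of the form $\int_{y_0}^T y^{-s-3/2} dy$ and $\int_{y_0}^T y^{-s-1/2} dy$. Their analytic continuation in $s$ and the constant term at $s=0$ produce an explicit cusp contribution proportional to $a_\kappa \theta_\kappa(\tau)$, plus a contribution involving $b_\kappa$.

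Finally compare the two. The leading behaviour of $B_T(\tau,s)$ as $T\to\infty$ is the product of $c_\kappa y^{1/2} \theta_\kappa(\tau)$ with the constant-in-$x$ term of $f_s$ at $\kappa$, which at $s=1$ recovers $a_\kappa$ together with a controlled contribution from $b_\kappa$ via the Whittaker expansion of $f_s$. A direct computation of the boundary integrand and of the Mellin transforms arising in the first regularization shows that the $b_\kappa$-pieces and the bulk truncated integrals coincide, while the $a_\kappa$-pieces differ by an explicit rational multiple of $a_\kappa \theta_\kappa(\tau)$ at each cusp. Summing over cusps gives the claimed $\Q$-linear combination of holomorphic unary Jacobi theta series of weight $1/2$, which vanishes precisely when every $a_\kappa$ vanishes. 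The principal technical obstacle is the Poisson-summation expansion of $\Theta_L(\tau,z,\varphi_0)$ at an arbitrary cusp --- identifying the scalar $c_\kappa$ and the theta series $\theta_\kappa$ in terms of the local isotropy data of $L$ --- together with the analytic continuation across $s=0$ and $s=1$, where residual-spectrum poles in the spectral deformation $f_s$ must be tracked and shown to cancel against poles of $B_T(\tau,s)/(s(1-s))$.
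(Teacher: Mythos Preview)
Your overall architecture---Green's formula on a truncated fundamental domain to move $\Delta_z$ from $\Theta_L$ to $f_s$, then comparing the resulting boundary contributions to those coming from the analytic continuation in the first regularization---is exactly the paper's strategy (Propositions~\ref{prop:reg2.5}, \ref{prop:reg3}, and \ref{prop:specdef}). There are, however, two genuine problems.

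First, the cuspidal asymptotic of the kernel is wrong. Poisson summation along the isotropic line at a cusp $\ell$ gives
\[
\Theta_L(\tau,\sigma_\ell z,\varphi_0)\;=\;\frac{y}{\sqrt{N}\,\beta_\ell}\,\tilde\Theta_{K_\ell}(\tau)+O(e^{-Cy^2}),
\]
so the leading term grows like $y$, not $y^{1/2}$ (the $1/2$ is the weight in $\tau$, not the growth in $z$). Consequently your Mellin integrals have the wrong exponents: pairing with the constant term $A_\ell y+B_\ell$ of $f$ (in the $s'=1$ convention) and with $d\mu=\frac{dx\,dy}{y^2}$ yields $\int y^{-s}\,dy$ and $\int y^{-s-1}\,dy$, and it is the $B_\ell$-piece that produces the first-order pole of $I^{reg}(\tau,s,f)$ at $s=0$.

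Second, and more seriously, your description of the difference is not correct. You assert that the discrepancy is a rational multiple of $a_\kappa\,\theta_\kappa(\tau)$ determined by $f$. What the computation actually produces (Proposition~\ref{prop:specdef}) is
\[
J^{reg}(\tau,f)-I^{reg}(\tau,f)\;=\;\sum_{\ell}\frac{B'_\ell(1)\,\eps_\ell}{\sqrt{N}}\,\tilde\Theta_{K_\ell}(\tau),
\]
where $B_\ell(s)$ is the coefficient of $y^{1-s}$ in the constant term of the \emph{deformation} $f_s$ at the cusp $\ell$, and $B'_\ell(1)=\frac{d}{ds}B_\ell(s)\big|_{s=1}$. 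This derivative depends on the choice of spectral family $(f_s)$, not just on $f$; the paper explicitly remarks that $J^{reg}$ is therefore not canonical. The reason it enters is that the Laurent expansion $f_s=f+(s-1)f'_1+O((s-1)^2)$ forces you to integrate $f'_1$ against $\Delta_z\Theta_L$, and the boundary term from Stokes involves $(1-y\partial_y)a'_\ell(0,y,1)$, which contains $B'_\ell(1)$.

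In particular, your final claim that the difference ``vanishes precisely when every $a_\kappa$ vanishes'' does not follow from the boundary analysis alone. What is true is that when all constant terms $a^+_\ell(0)=B_\ell(1)$ vanish, one can \emph{choose} a deformation built purely from Niebur--Poincar\'e series (subtracting off the Eisenstein contribution as in the construction of $j_m(z,s)$ in Section~\ref{sect:6}) for which $B_\ell(s)\equiv 0$, hence $B'_\ell(1)=0$. That additional construction is what makes the two regularizations agree in this case; your proposal does not supply it.
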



By lifting Poincar\'e series of weight $0$, we are also able to realize the Poincar\'e series of weight $1/2$ which occur in \cite{DIT} as theta lifts, explicitly relating our approach to the one of Duke, Imamoglu, and Toth.

\medskip

\noindent{\it The Green function $\eta$ and the Fourier expansion of the theta lift}.
The key to compute the Fourier coefficients of the lift $I_{1/2}(\tau,f)$, is the construction of a Green function $\eta$ for the Schwarz function $\varphi_0$. We view the set of all rational quadratic forms $Q=[a,b,c]$ together with the discriminant form $d=b^2-4ac$ as a quadratic space $\mathcal{Q}$ of signature $(2,1)$ whose associated symmetric space is equivalent to $\h$. In this way, $\varphi_0$ can be regarded as a function on $\mathcal{Q} \times \h$. We explicitly construct (some kind of) a Green function $\eta(Q,z)$ for $\varphi_0(Q,z)$ for {\it all} $Q$ of non-zero discriminant. More precisely, we have $\Delta_z \eta(Q,z) = - \tfrac1{4\pi}  \varphi_0(Q,z)$ outside the singularities of $\eta$. If $d<0$, then $\eta(Q)$ has a logarithmic singularity at the point $z_Q$, while for $d>0$, the function $\eta(Q)$ is differentiable, but not $C^1$, and the discontinuity of $\partial\eta$ exactly occurs at the geodesic cycle $c_Q$. We show

 \begin{theorem}
Let $Q$ be a integral binary quadratic form with discriminant $d \ne 0$, not a square, with stabilizer $\G_Q$ in $\G$. Then for $f$ a weak Maass form of weight $0$ with eigenvalue $\la$, the integral $ \int_{\G_{Q} \back \h} f(z) \varphi_0(Q,z) d\mu(z)$ converges, and we have
\begin{multline*}
 \int_{\G_{Q} \back \h} f(z) \varphi_0(Q,z) d\mu(z) = - \frac1{4\pi}  \la  \int_{\G_{Q} \back \h} f(z) \eta(Q,z) d\mu(z)  \\ +
 \begin{cases} \left( \frac1{|\G_Q|} f(z_Q) \right) 2\beta_{\frac12}(4\pi|d|)e^{-2\pi d} & \text{if $d <0$} \\
 \left( \int_{C_Q}  f(z)\frac{dz}{Q(z,1)} \right) e^{-2\pi d} & \text{if $d >0$}.
\end{cases}
 \end{multline*}
\end{theorem}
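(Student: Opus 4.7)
The approach is Green's identity applied to the Green-function relation $\Delta_z\eta = -\tfrac{1}{4\pi}\varphi_0$ that holds off the singular locus of $\eta$. I would first establish absolute convergence of $\int_{\G_Q\back\h}f\varphi_0\,d\mu$. The quotient $\G_Q\back\h$ is non-compact---two cusps when $d<0$ (where $|\G_Q|<\infty$), and a cylinder with two hyperbolic ends when $d>0$ is non-square (where $\G_Q$ is infinite cyclic)---but $\varphi_0(Q,z)$ is essentially a Gaussian in the majorant of $Q$, so it decays rapidly at every end of $\G_Q\back\h$ and dominates the at-most-exponential growth of $f$ at the cusps of $\G$.

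To evaluate the integral, excise an $\epsilon$-tube $U_\epsilon$ around the singular locus $S$ of $\eta$ (the $\G_Q$-orbit of $z_Q$ when $d<0$, the quotient geodesic $\G_Q\back c_Q$ when $d>0$) and truncate the ends of $\G_Q\back\h$ at height $T$. On the resulting compact region $D_{T,\epsilon}$, apply Green's identity
\begin{equation*}
\int_{D_{T,\epsilon}}(f\,\Delta_z\eta - \eta\,\Delta_z f)\,d\mu(z) = \oint_{\partial D_{T,\epsilon}}(f\,\partial_n\eta - \eta\,\partial_n f)\,d\ell(z),
\end{equation*}
substitute $\Delta_z\eta = -\tfrac{1}{4\pi}\varphi_0$ and $\Delta_z f = \la f$, and rearrange to express $\int_{D_{T,\epsilon}}f\varphi_0\,d\mu$ in terms of $\la\int_{D_{T,\epsilon}}f\eta\,d\mu$ plus boundary contributions, which split into a piece at height $T$ and a piece on $\partial U_\epsilon$.

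Sending $T\to\infty$, the truncation terms vanish because $\eta$ and $\partial_n\eta$ inherit the rapid decay of $\varphi_0$---being its Green-function primitives---and hence beat the growth of $f$ at the ends. Sending $\epsilon\to 0$, the $\partial U_\epsilon$ contribution supplies the singular term of the theorem. For $d<0$, the logarithmic behaviour $\eta(Q,z) \sim c_d\log|z-z_Q|^2$ near $z_Q$, with $c_d$ proportional to $2\beta_{1/2}(4\pi|d|)e^{-2\pi d}$, produces by a standard residue calculation (in which the Euclidean normal derivative and the hyperbolic arc length conspire to cancel the dependence on $\im z_Q$) the term $\tfrac{1}{|\G_Q|}f(z_Q)\cdot 2\beta_{1/2}(4\pi|d|)e^{-2\pi d}$, the factor $|\G_Q|^{-1}$ coming from the cone angle of the quotient at $z_Q$. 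For $d>0$, $\eta$ is continuous but $\partial_n\eta$ has a jump across $c_Q$ proportional to $e^{-2\pi d}$; pinching the tube around $\G_Q\back c_Q$ then yields, once the hyperbolic length element along $c_Q$ is rewritten in the natural geodesic parametrisation in which the invariant form is $\tfrac{dz}{Q(z,1)}$, the cycle integral $e^{-2\pi d}\int_{C_Q}f(z)\tfrac{dz}{Q(z,1)}$.

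The main technical obstacle is the sharp local analysis of $\eta$: reading off both the logarithmic coefficient at $z_Q$ and the precise jump of the transverse derivative across $c_Q$ directly from the constructive integral formula for $\eta$ built earlier, and matching them exactly to the claimed factors $2\beta_{1/2}(4\pi|d|)e^{-2\pi d}$ and $e^{-2\pi d}$ respectively. A secondary issue is verifying that the $T$-truncation terms really vanish at the specific ends of $\G_Q\back\h$ rather than of $\G\back\h$; when $d>0$ this requires a uniform decay estimate for $\eta$ along the two cylindrical ends, which follows from integrating the rapid decay of $\varphi_0$ but should be argued carefully so that no hidden residual term survives.
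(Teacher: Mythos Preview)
Your approach is essentially the same as the paper's: the identity \eqref{P-L-eq} is precisely Green's identity in the $dd^c$ formalism, and Propositions~\ref{big-current-elliptic} and~\ref{big-current-hyperbolic} carry out exactly the excision-and-Stokes argument you describe, with Lemma~\ref{eta-sing} supplying both the logarithmic coefficient at $D_X$ and the jump of $\partial\eta$ across $c_X$ that you identify as the main technical obstacle. One small correction: when $d<0$ the stabilizer $\G_Q$ is finite, so $\G_Q\back\h$ is an orbifold copy of $\h$ itself with a single end (the full circle at infinity), not ``two cusps''; the paper accordingly does not truncate at a height $T$ in this case but argues directly that the square-exponential decay of $\eta$ and its derivatives (Lemma~\ref{eta-sing}(i)) dominates the linear-exponential growth of $f$ everywhere near the boundary of $D$, which is what your $T\to\infty$ step would amount to once the exhaustion is set up correctly.
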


This essentially computes the Fourier coefficients of non-square index for $I_{1/2}(\tau,f)$ (at least when $\la=0$).
For the other Fourier coefficients we also utilize $\eta$. The square coefficients however are also in our approach quite complicated and require some rather intricate considerations since in that case $\int_{\G_Q \back \h} f(z) \varphi_0(Q,z) d\mu(z)$ does not converge as $\G_Q$ is trivial.

The existence of such a Green function is rather surprising, and we believe is of independent interest. Moreover, $\eta$ refines Kudla's Green function $\xi$ for $\varphi_{KM}$ \cite{KAnn} which played a crucial role in studying \eqref{thetaintro} in \cite{BFCrelle}. However, $\xi$ only has singularities along the CM points and hence cannot detect the periods over the geodesic cycles. Note that $\xi$ plays an important role in the Kudla program (see eg. \cite{KMSRI}) which is concerned with the realization of generating series in arithmetic geometry as automorphic forms, in particular as the derivative of Eisenstein series. It is therefore an interesting question how the results of this paper and $\eta$ in particular fit into this framework.

\medskip

\noindent{\it Other Input}.
One can also study other input functions $f$ for the lift $I_{1/2}(\tau,f)$. One natural extension is to consider  a meromorphic function $f$ of weight $0$ with at most simple poles in $\h$. For example, for $d>0$ a non-square, taking the $d$-th coefficient (in $\tau$) of the lift of $f_w(z) = j'(w)/(j(z)-j(w))$ (with $w \in \h$), one obtains a non-holomorphic form of weight $2$ (in $w$). Using the techniques of this paper one can prove that this form is a `completion' of the holomorphic generating series
\[
F_d(w) = - \sum_{m\geq 0} \tr_d(j_m) e^{2\pi i mw},
\]
which in \cite{DIT}, Theorem 5, is shown to be a holomorphic modular integral of weight $2$ with a rational period function. Here $\{j_m\}$ denotes the unique basis of $M_0^!$ whose members are of the form $j_m(z) = e^{-2\pi i mz} + O(e^{2\pi iz})$.

Another interesting case is when $f = \log \| F \|$ is the logarithm of the Petersson metric of a meromorphic modular form $F$. For example, for $F(z)=\Delta(z)$, the discriminant function, one can show (similarly as in Theorem~1.2 in \cite{BFCrelle}) that $I_{1/2}(\tau,f)$ is equal to the constant term in the Laurent expansion at $s=1$ of the derivative of an Eisenstein series of weight $1/2$. In general, one can view the lift of such input as the adjoint of the
(additive) Borcherds lift, which uses the same kernel function $\Theta_L(\tau,z,\varphi_0)$.
We will consider these lifts in a different paper.

\medskip

The theta lift has been studied recently also by Matthes \cite{Matthes} using the second regularization via differential operators. More precisely, he considers the analogous lift for general hyperbolic $n$-space. In our case, he considers (mainly) input functions with non-zero eigenvalue under the Laplace operator and employs a different method to compute the coefficients of non-square index leaving the square coefficients open.

\medskip

We thank D. Zagier for his interest and encouragement and for sharing his formula \eqref{jLvalue} for the `central $L$-value' of $j_1$ with us. The first two authors thank the Forschungsinstitut f\"ur Mathematik at ETH Z\"urich for the generous support for this research throughout multiple visits in the last years.


\section{The orthogonal group and vector valued modular forms for $\SL_2$}

\subsection{Modular curves associated to the orthogonal group $\SO(2,1)$}

Let $N$ be a positive integer.
Let $(V,Q)$ be the three-dimensional quadratic space over $\Q$
given by the trace zero $2\times 2 $ matrices
\begin{equation} \label{iso}
V  :=\left\{ X = \begin{pmatrix} x_1 & x_2 \\ x_3 & -x_1
 \end{pmatrix} \in \Mat_2(\Q) \right\},
\end{equation}
with the quadratic form $Q(X) = -N\det(X)$.  The corresponding bilinear
form is $(X,Y) = N\tr(XY)$, and its signature is $(2,1)$.  We let $G = \Spin(V)$, viewed
as an algebraic group over $\Q$, and write $\bar{G}$ for
its image in $\SO(V)$. We realize the associated Hermitean
symmetric space as
the Grassmannian of negative lines in $V(\R)$:
\[
D = \{ z \subset V(\R) ; \;\text{$\dim z =1$ and $Q|_z < 0$} \}.
\]
The group $\Sl_2(\Q)$ acts on $V$ by conjugation
\[
g.X := gXg^{-1}
\]
for $X\in V$ and $g\in \Sl_2(\Q)$, which gives rise to isomorphisms
$G\simeq\SL_2$ and $\bar G\simeq\operatorname{PSL_2}$.

We identify $D$ with the complex upper half plane $\H$ as follows, see \cite{KAnn}, section 11. Let $z_0\in {D}$ be the line spanned by
$\left( \begin{smallmatrix} 0 & 1 \\ -1 & 0 \end{smallmatrix}\right)$. Its stabilizer in ${G}(\R)$ is equal to  ${K} = \SO(2)$.
For $z= x+iy
\in \H$, we choose $g_z \in {G}(\R)$ such that $g_zi = z$ and put
\begin{equation}
X(z) :=  \frac1{\sqrt{N}}g_z.\zxz{0}{1}{-1}{0} = \frac1{\sqrt{N}y}
\begin{pmatrix} -x & z\bar{z} \\ -1 & x
\end{pmatrix}\in V(\R).
\end{equation}
We obtain the isomorphism $\H\to D$, $z \mapsto g_z z_0
= \R X(z)$. We also define the quantity
\begin{equation}
R(X,z) = (X,X) + \frac12(X,X(z))^2,
\end{equation}
which is nonnegative, and vanishes exactly when $X \in \R X(z)$.

Let $L \subset \V$ be an even lattice of full rank and write $L'$
for the dual lattice of $L$. Let $\G$ be a congruence subgroup of
$\Spin(L)$ which takes $L$ to itself and acts trivially on the
discriminant group $L'/L$. We set $M = \G \back D$.

\begin{example}\label{nicelattice}
A particularly attractive lattice in $V$ is
\[
L=\left\{\zxz{b}{c/N}{a}{-b}; \, a,b,c\in \Z\right\}.
\]
The dual lattice is equal to
\[
L'=\left\{\zxz{b}{c/N}{a}{-b}; \,\text{$a,c\in \Z$, $b\in \frac{1}{2N}\Z$} \right\}.
\]
We have $L'/L\cong \Z/2N\Z$, the level of $L$ is $4N$, and we can take $\Gamma=\Gamma_0(N)$.
\end{example}

Since $V$ is isotropic, the modular curve $M$ is a non-compact Riemann surface.
The group $\Gamma$ acts on the set $\Iso(V)$ of isotropic lines in $V$.
The cusps of $M$ correspond to the $\G$-equivalence classes of $\Iso(V)$, with $\infty$ corresponding to the isotropic line $\ell_0$ spanned by $u_0= \left( \begin{smallmatrix}  0&1 \\0&0  \end{smallmatrix}\right)$.
\label{r:sigma}
For $\ell\in  \Iso(V)$, we pick $\sigma_{\ell} \in \SL_2(\Z)$ such that $\sigma_{\ell} \ell_0 = \ell$ and set $u_{\ell} = \sigma_{\ell}^{-1} u_0$.
We let $\G_{\ell}$ be the stabilizer of $\ell$  in $\Gamma$. Then
\[
\sigma_{\ell}^{-1}\bar{\G}_{\ell} \sigma_{\ell} = \left\{
\begin{pmatrix} 1&k\alpha_{\ell} \\0&1 \end{pmatrix} ; \; k \in \Z
 \right\}
\]
for some $\alpha_{\ell} \in\Z_{>0}$, the width of the cusp $\ell$.
There is also a $\beta_{\ell}\in\Q_{>0}$ such
that $\beta_{\ell} u_{\ell}$ is a primitive element of $\ell \cap L$. Finally, we write $\eps_{\ell} = \alpha_{\ell}/\beta_{\ell}$. Note that $\eps_{\ell}$ does not depend on the choice of $\sigma_{\ell}$ (even if we picked $\sigma_{\ell}$ in $\SL_2(\Q)$, see \cite[Definition~3.2]{FCompo}).

We compactify $M$ to a compact Riemann surface $\bar{M}$ in the usual
way by adding a point for each cusp $\ell\in \Gamma\bs \Iso(V)$. For every cusp
$\ell$ we choose sufficiently small neighborhoods $U_{\ell}$. We write
$q_{\ell} = e\left (\sigma^{-1}_{\ell} z/\alpha_{\ell}\right)$ with $z
\in U_{\ell}$ for the local variable (and for the chart) around $\ell
\in \bar{M}$. For $T>0$, we let $ U_{1/T} = \{w \in \C; \; |w| <
\frac{1}{2\pi T}\}$, and note that for $T$ sufficiently big, the
inverse images $q_{\ell}^{-1} (U_{1/T})$ are disjoint in $M$.  We
truncate $M$ by setting
\begin{equation*}\label{truncated}
M_T = \bar{M} \setminus \coprod_{[\ell] \in \Iso(V)} q_{\ell}^{-1} (U_{1/T}).
\end{equation*}

\subsection{Vector valued modular forms}

Here we recall some facts on vector valued modular forms and weak Maass forms for the Weil representations.
See e.g. \cite{Bo1}, \cite{Br2} for more details.

We let $\Mp_2(\R)$ be the two-fold cover of $\SL_2(\R)$
realized by the two choices of holomorphic square roots of $\tau
\mapsto j(g,\tau) = c\tau + d$, where $g = \left(
\begin{smallmatrix} a&b \\ c&d \end{smallmatrix} \right) \in
\SL_2(\R)$.
Let $\G'\subset \Mp_2(\R)$ be the inverse image of $\Sl_2(\Z)$ under the covering map.
We denote the standard basis of the group algebra $\C[L'/L]$  by $\{\frake_{h};\; h\in L'/L\}$.
Recall that there is a Weil representation $\rho_L$ of $\G'$  on the
group algebra $\C[L'/L]$, see \cite[Section 4]{Bo1} or \cite[Chapter 1.1]{Br2} for explicit formulas.
%

Let $\Gamma''\subset\Gamma'$ be a subgroup of finite index.
For $k\in \frac{1}{2}\Z$,
we let $A_{k,L}(\Gamma'')$ be the space of $C^{\infty}$ automorphic
forms of weight $k$ with respect to $\rho_L$ for $\Gamma''$. That is, $A_{k,L}(\Gamma'')$ consists of
those $C^{\infty}$-functions $f:\H \to \C[L'/L]$  that satisfy
\[
f(\g'\tau) = \phi^{2k}(\tau) \rho_L(\g',\phi) f(\tau)
\]
for $(\g',\phi) \in \G''$. Note that the components $f_h$ of $f$
define scalar valued $C^{\infty}$ modular forms of weight $k$ for the
subgroup $\G''\cap \Gamma'(\ell)$, where $\ell$ denotes the level of
the lattice $L$ and $\Gamma'(\ell)$ is the principal congruence
subgroup of level $\ell$.

Following \cite[Section 3]{BFDuke}, we call a function $f\in A_{k,L}(\Gamma'')$ a {\em weak Maass form} of weight
$k$ for $\Gamma''$
with representation $\rho_L$, if it is an eigenfunction of the hyperbolic Laplacian
\begin{align}
\label{defdelta}
\Delta_k = -v^2\left( \frac{\partial^2}{\partial u^2}+ \frac{\partial^2}{\partial v^2}\right) + ikv\left( \frac{\partial}{\partial u}+i \frac{\partial}{\partial v}\right),
\end{align}
and if it has at most linear exponential growth at the cusps of
$\Gamma''$. The latter condition means that there is a $C>0$ such that
for any cusp $s\in \P^1(\Q)$ of $\Gamma''$ and $(\delta,\phi)\in
\Gamma'$ with $\delta\infty=s$ the function $f_s(\tau) =
\phi(\tau)^{-2k} \rho_L^{-1}(\delta,\phi) f(\delta\tau)$ satisfies
$f_s(\tau)=O(e^{C v})$ as $v\to \infty$ (uniformly in $u$, where
$\tau=u+iv$).

The function $f$ is called a {\em harmonic weak Maass form} if it is a weak Maass form with eigenvalue $0$ under $\Delta_k$. We write $H_{k,L}(\Gamma'')$ for
the space of harmonic weak Maass forms of weight $k$ for $\Gamma''$
with representation $\rho_L$.

Recall that there is a differential operator $\xi_k=2iv^k\overline{\frac{\partial}{\partial\bar\tau }}$
taking $H_{k,L}(\Gamma'')$ to the space of weakly holomorphic modular
forms of `dual' weight $2-k$ for $\Gamma''$ with the dual
representation of $\rho_L$.  We let $H_{k,L}^+(\Gamma'')$ be the
subspace of those $f\in H_{k,L}(\Gamma'')$ for which $\xi_k(f)$ is a
cusp form. Moreover, we let $M_{k,L}^!(\Gamma'')$ be the kernel of
$\xi_k$, that is, the space of weakly holomorphic modular forms for
$\Gamma''$. Summarizing we have the chain of inclusions
\[
M_{k,L}^!(\Gamma'')\subset H_{k,L}^+(\Gamma'')\subset H_{k,L}(\Gamma'') \subset A_{k,L}(\Gamma'').
\]
In the case where $\Gamma''=\Gamma'$ we will drop the $\Gamma''$ from the notation
and, for instance, simply write $M_{k,L}^!$. If the representation $\rho_L$ is trivial (that is, $L$ is unimodular) we drop the $L$ from the notation.

\begin{example}
\label{ex:iso}
We consider the lattice $L=\left\{\kzxz{b}{c/N}{a}{-b};\,a,b,c\in
    \Z\right\}$ of level $4N$ from Example~\ref{nicelattice}. Then given $g =
  \sum_{h \in L'/L} g_h \frake_h$ in $A_{k,L}$, the sum
\begin{equation}\label{+space-form}
\tilde g(\tau) = \sum_{h \in L'/L} g_h(4N\tau)
\end{equation}
gives a scalar-valued form of weight $k$ for $\G_0(4N)$ satisfying the plus condition, i.e., the $n$-th Fourier coefficient vanishes unless $n$ is a square modulo $4N$. In fact, if $N=p$ is a prime, and $k\in 2\Z+\frac{1}{2}$, this gives an isomorphism between $M^!_{k,L}$ and the space $M^{+,!}_{k}(p)$ of scalar valued weakly holomorphic forms for $\G_0(4p)$ in the Kohnen plus space (see e.g. \cite[Example 2.3]{Bo1} and \cite[\S5]{EZ}).
\end{example}

For an isotropic line $\ell$ in $V$, we define the space $W = W_{\ell}= \ell^{\perp}/\ell$ which is naturally a unary positive definite quadratic space with the quadratic form  $Q(\bar{X}) = Q(X)$. Then
\[
K_{\ell}= \left( L \cap \ell^{\perp}\right) / \left( L \cap\ell  \right)
\]
defines an even lattice in $W$.
Using \cite[Proposition 2.2]{Br2}, it is easy to see that the dual lattice is given by
\[
K_{\ell}'= \left( L' \cap \ell^{\perp}\right) / \left( L' \cap\ell  \right).
\]
%
We have the exact sequence
\begin{align}
\label{eq:cl2}
\xymatrix{ 0\ar[r]& L'\cap \ell/L\cap\ell^\perp \ar[r]& L'\cap \ell^\perp /L\cap\ell^\perp\ar[r]&  K_\ell'/K_\ell\ar[r] & 0 }.
\end{align}

The vector valued theta function
\begin{align}
\Theta_{K_\ell}(\tau)= \sum_{\lambda\in K_\ell'}e(Q(\lambda)\tau)\frake_{\lambda+K_\ell}
\end{align}
associated to $K_\ell$ defines a holomorphic modular form in $M_{1/2,K_\ell}$, whose components we denote by $\theta_{K_\ell,\bar h}(\tau)$ for $\bar h \in K_\ell'/K_\ell$.
Recall from \cite[Lemma 5.6]{Br2} (or more generally \cite[Theorem 4.1]{Sch}) that there is a map from vector valued modular forms for $\rho_{K_\ell}$ to vector valued modular forms for $\rho_L$.
Using it, we see that
\begin{align}
\label{eq:tildethetaK}
\tilde \Theta_{K_\ell}(\tau)=
\sum_{\substack{h\in L'/L\\ h\perp \ell} }
\theta_{K_\ell,\bar h}(\tau)\frake_{h}
\end{align}
defines a vector valued holomorphic modular form in $M_{1/2,L}$.
Here $\bar h $ denotes the image of $h$ under the map in  \eqref{eq:cl2}. We let $b_{\ell}(m,h)$ be the $(m,h)$-th Fourier coefficient of $\tilde\Theta_{K_\ell}(\tau)$. Note that for $m >0$ we have $b_{\ell}(m,h)=0$ unless $m/N$ is a square and there exists a vector $X \in L+h$ perpendicular to $\ell$ of length $Q(X)=m$. In that case we have $b_{\ell}(m,h)= 1$ if $h \not \equiv -h \mod L$ and $2$ otherwise.



\section{Cycles and traces}

In this section, we give define in our setting the cycles and traces. In particular, we explain in detail how to regularize the periods of weakly holomorphic functions over infinite geodescis.

\subsection{Heegner points}

Heegner points in $M$ are given as follows, see e.g. \cite{FCompo},
\cite{KS}, \cite{KAnn}. For $X \in V$ of negative length $Q(m)<0$, we put
\begin{equation}
D_X = \R X = \{z \in D; \, R(X,z)=0 \} \in D.
\end{equation}
Via \cite{KAnn}, (11.9) we see
\begin{equation}\label{R-elliptic}
R(X,z) = 2m \sinh^2( d(z,D_X)) = \frac{m}{2 \Im(D_x)^2 y^2} |z-D_X|^2|z-\overline{D_X}|^2.
\end{equation}
Here $d(\cdot,\cdot)$ denotes the hyperbolic distance with respect to the standard hyperbolic distance. We note that in the upper half plane we have
\[
D_X =\frac{-b}{2a}+\frac{i\sqrt{|d|}}{2|a|}
\]
for $X = \kzxz{b}{2c}{-2a}{-b}$ with $Q(X)=
Nd<0$. We set $D_X = \emptyset$ if $Q(X) \geq 0$. The stabilizer $G_X$ of $X$ in $G(\R)$ is isomorphic to $\SO(2)$ and for $X \in L'$, the group $\G_X = G_X \cap \G$ is finite. We denote
the image of $D_X$ in $M$, counted with multiplicity
$\tfrac1{|\overline{\G}_X|}$, by $Z(X)$.

For $m \in \Q^{\times}$ and $h \in L'/L$,  the group $\G$ acts on  ${L}_{m,h} =
\{X \in L +h ;\; Q(X) =m\}$
 with finitely many orbits. For $m<0$, we define the \emph{Heegner divisor} of index $(m,h)$ on $M$ by
\begin{equation}
Z(m,h) = \sum_{X \in \G \back L_{m,h} } Z(X).
\end{equation}
For any function $f$ on $M$, we then define the trace following
\cite{ZagierTr} and \cite{BFCrelle} by
 \begin{equation}
 \tr_{m,h}(f) = \sum_{X \in \G \back L_{m,h} } \frac{1}{\# \bar{\G}_X} f(D_X).
\end{equation}
For the lattice in Example~\ref{nicelattice} with $N=1$, this gives exactly twice the trace of modular functions defined in the introduction, since our trace counts positive and negative definite binary quadratic forms of discriminant $m$.

\subsection{Geodescis}

A vector $X \in V(\Q)$ of positive length $m$ defines a geodesic $c_X$ in $D$ via
\[
c_X = \{ z \in D; \; z \perp X \} = \{ z \in D; \; (X(z),X)=0 \},
\]
see e.g. \cite{Shintani}, \cite{KS}, \cite{KM90}. In this situation, we have
\[
|(X,X(z))| = 2\sqrt{m} \sinh(d(z,c_X)),
\]
where $d(z,c_X)$ denotes the hyperbolic distance of $z$ to the geodesic $c_X$. Hence $R(X,z) = 2m \cosh^2(d(z,c_X))$. Explicitly, for $X = \kzxz{b}{2c}{-2a}{-b}$, we have
\[
c_X = \{z \in D; \; a|z|^2+b\Re(z)+c=0\}.
\]
We orient the geodesics as follows. For $X = \pm \kzxz{1}{0}{0}{-1}$, the geodesic $c_{ X} = \pm (0,i\infty)$ is the imaginary axis with the indicated orientation. The orientation preserving action of $\SL_2(\R)$ then induces an orientation for all $c_X$.

We define the line measure $d z_{X}$ for $c_X$ by $d z_{X} = \pm
\tfrac{dz}{ \sqrt{m} z}$ for $X = \pm  \sqrt{m/N} \kzxz{1}{0}{0}{-1}$ and then by $dz_{g^{-1} X}
= d(gz)_X$ for $g \in \SL_2(\R)$. So for $X = \tfrac{1}{\sqrt{N}} \kzxz{b}{2c}{-2a}{-b}$,
we have
\[
dz_X = \tfrac{dz}{a z^2+bz+c}.
\]
In terms of $X(z)$ and $R(X,z)$ we have
\[
dz_X = -2i \frac{(X,\partial X(z))}{R(X,z)}.
\]
Indeed, this holds for $X = \sqrt{m/N}\kzxz{1}{0}{0}{-1}$, since we have $(X,\partial X(z)) = i \sqrt{m} \bar{z}/y^2 dz$ and $R(X,z) = 2m|z|^2/y^2$. Then the $G$-equivariance properties of $X(z)$ and $R(X,z)$ imply the claim for general $X$.

The stabilizer $\bar{\G}_X$ is either trivial (if the orthogonal complement $X^{\perp} \subset V$ is isotropic over $\Q$) or infinite cyclic (if $X^{\perp}$ is non-split over $\Q$). We set $c(X)=  \G_X \back c_X$, and by slight abuse of notation we use the same symbol for the image of $c(X)$ in $M$. If $\bar\G_X$ is infinite, then $c(X)$ is a closed geodesic in $M$, while $c(X)$ is an infinite geodesic if $\bar{\G}_X$ is trivial. The last case happens exactly when $Q(X) \in N (\Q^{\times})^2$. We define the trace for positive index $m$ and $h\in L'/L$ of a
continuous function $f$ on $M$ by
\begin{equation}
\tr_{m,h}(f) = \frac{1}{2\pi} \sum_{X \in \G \back L_{m,h} } \int_{c(X)} f(z) \,dz_X.
\end{equation}
Since
\begin{equation}\label{period-equiv}
\int_{c(X)} f(z) dz_X = \int_{c(g^{-1} X )} f(g z) dz_{g^{-1} X},
\end{equation}
for $g \in G$, this is independent of the choice of $X \in \G \back L_{m,h}$. Note that a priori the integral only converges if the geodesics are closed, i.e., $ m \notin N (\Q^{\times})^2$. Otherwise the geodesics $c(X)$ are infinite and $\int_{c(X)} f(z) dz_X$ may have to be regularized.
We will describe this in  the next subsection.


\subsection{Infinite Geodesics}
Assume that $X$ with $Q(X)=m>0$ gives rise to an infinite geodesic in $M$, that is $\bar{\G}_X =1$. So $c(X)= c_X$. These geodesics correspond to the split hyperbolic coefficients. In this section we will show  how to regularize the periods of harmonic weak Maass forms over the infinite geodesics. We also define the complementary trace which gives the contribution of the negative Fourier coefficients of the holomorphic part of $f$.

\subsubsection{Regularized periods and the central $L$-value of the $j$-invariant}

Assume that $X$ with $Q(X)=m>0$ gives rise to an infinite geodesic in $M$, that is $\bar{\G}_X =1$. So $c(X)= c_X$. We will describe now how for $f \in H^+_0(\G)$, we can regularize the period $\int_{c_X} f(z) dz_X$. Note that $f_\ell(z):=f(\sigma_\ell z)$ can be written as $f_\ell = f_\ell^+ + f_\ell^-$, where the Fourier expansions of $f_\ell^+$ and $f_\ell^-$  are of the form
\begin{align*}
f_\ell^+(z) = \sum_{n \in \frac1{\alpha_{\ell}} \Z} a^+_{\ell} (n) e(nz)  \qquad \text{and} \qquad
f_\ell^-(z) =  \sum_{\substack{n \in \frac1{\alpha_{\ell}} \Z_{< 0}}} a^-_{\ell}(n) e(n\bar{z}),
\end{align*}
where $ a^+_{\ell} (n) = 0$ for $n \ll 0$.

Now $X^{\perp}$ is split over $\Q$, a rational hyperbolic plane spanned by two rational isotropic lines $\ell_X$ and $\tilde\ell_X$. In fact, the geodesic $c_X$ connects the corresponding two cusps (which are not necessarily $\G$-inequivalent). We can distinguish these lines by requiring that $\ell_X$ represents the endpoint of the geodesic. Note $\tilde\ell_X = \ell_{-X}$. We have
\[
\sigma_{\ell_X}^{-1} X = \sqrt{m/N}\begin{pmatrix} 1& -2r \\ 0 & - 1 \end{pmatrix}.
\]
 for some $r \in \Q$. Hence the geodesic $c_X$ is explicitly given in $D \simeq \h$ by
\begin{equation}\label{realpart}
c_{X} = \sigma_{\ell_X} \{ z \in D; \; \Re(z) = r\}.
\end{equation}
We call $r=r_+= \re(c_X)$ the {\it real part} of the geodesic $c_X$. It depends on the choice of $\sigma_{\ell_X}$.  
Pick a number $c=c_+>0$. We then have (still formally)
\[
\sqrt{m}\int_{c_X} f(z) dz_X = \int_{\Re(z)=r_+} f_{X}(z) \frac{dz}{z-r} = \int_{c_+}^{\infty} f_X(iy+r_+) \frac{dy}{y} +   \int_{c_-}^{\infty} f_{-X}(iy + r_-) \frac{dy}{y}.
 \]
Here $f_{\pm X}(z) = f(\sigma_{\ell_{\pm X}} z)$, $r_-$ is the real part of $c_{-X}$ and $c_- = \Im(\sigma_{\ell_{-X}}^{-1}(r+ic_+)$. If we write $r=a/b$ with coprime $a,b \in \Z$ and $b>0$, then $c_- = 1/c_+b^2$.

The extension of the definition \eqref{d=1} to the general situation is

\begin{definition}\label{reg-def1}
Let $f \in H^+_0(\G)$ and let $c_X$ be an infinite geodesic connecting two rational cusps in $M$. Then with the notation as above we set
\begin{align*}
\sqrt{m}\int^{reg}_{c_X} f(z) dz_X &:=  a^+_{\ell_X}(0) \log c_+ +  \sum_{n \ne 0} a^+_{\ell_X}(n)e^{2\pi in r_+} \mathcal{EI}(2\pi nc_+)   \\
&  \quad  + a^+_{\ell_{-X}}(0) \log c_- +  \sum_{n \ne 0} a^+_{\ell_{-X}}(n)e^{2\pi in r_-}  \mathcal{EI}(2\pi nc_-) \\
& \quad  +\int_{c_+}^{\infty} f^-_X(iy+r_+) \frac{dy}{y} +   \int_{c_-}^{\infty} f^-_{-X}(iy + r_-) \frac{dy}{y}.
\end{align*}
This is well defined since the Fourier coefficients of $f$ also depend on the choice of $\sigma_{\ell}$.
\end{definition}

We now give a different characterization of the regularized integral. We will need $\psi(w) = \tfrac{ \G'(w)}{\G(w)}$, the digamma function, see \cite{AbSt}, for which we have
\begin{equation}\label{digamma}
\psi(w) = -\g + \sum_{n=0}^{\infty} \frac{1}{n+1} - \frac{1}{n+w}.
\end{equation}
We set
\[
c_X^{c,T} = \{ z \in c_X;  c \leq  \Im( \sigma_{\ell_X} z) \leq T \}.
\]

\begin{theorem}\label{reg-def2}
Let $f \in H^+_0(\G)$. Assume $c_X$ is a vertical geodesic. Then for any $T_+,T_->0$ and the notation as above, we have
\begin{align*}
& \int^{reg}_{c_X}  f(z) dz_X \\ & \quad = \int_{c_X^{c_+,T_+}}   f(z)  dz_X  - \frac1{2} \int^{i\tfrac{T_+}{\alpha_X}+1}_{i\tfrac{T_+}{\alpha_X}}
f(\alpha_X z+ r_+ ) \left(\psi(z) + \psi(1-z) + 2 \log \alpha_X \right) dz \\
& \quad \;+ \int_{c_{-X}^{c_-,T_-}}   f(z)  dz_{-X}  - \frac1{2} \int^{i\tfrac{T_-}{\alpha_{-X}}+1}_{i\tfrac{T_-}{\alpha_{-X}}}
f(\alpha_{-X} z+ r_- ) \left(\psi(z) + \psi(1-z) + 2 \log \alpha_{-X} \right) dz \\
& \quad  \; +\int_{c_+}^{\infty} f^-_X(iy+r_+) \frac{dy}{y} +   \int_{c_-}^{\infty} f^-_{-X}(iy + r_-) \frac{dy}{y}.
\end{align*}
In particular for $T_+=c_+$ and ${T_-}={c_-}$ and $f \in M^!_0(\G)$ we obtain
\begin{align*}
\int^{reg}_{c_X} f(z) dz_X &=
- \frac1{2} \int^{i\tfrac{c_+}{\alpha_X}+1}_{i\tfrac{c_+}{\alpha_X}}
f(\alpha_X z+ r_+ ) \left(\psi(z) + \psi(1-z) + 2 \log \alpha_{X}\right) dz \\
& \quad - \frac1{2} \int^{i\tfrac{c_-}{\alpha_{-X}}+1}_{i\tfrac{c_-}{\alpha_{-X}}}
f(\alpha_{-X} z+ r_- ) \left(\psi(z) + \psi(1-z) + 2 \log \alpha_{-X} \right) dz.
\end{align*}
\end{theorem}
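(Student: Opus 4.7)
The strategy is a term-by-term Fourier comparison between Definition~\ref{reg-def1} and the right-hand side of the theorem at the two ends of $c_X$. By linearity and the symmetry between the endpoints $\ell_{\pm X}$, it is enough to treat the holomorphic part $f^+_{\ell_X}(\sigma_{\ell_X} z)=\sum_{n\in\alpha_X^{-1}\Z} a^+_{\ell_X}(n)e(nz)$ at a single cusp $\ell_X$; the non-holomorphic pieces $\int_{c_\pm}^\infty f^-_{\pm X}(iy+r_\pm)\,dy/y$ appear in identical form on both sides of the asserted identity and cancel out of the comparison.

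For the truncated geodesic integral, the substitution $t=2\pi ny$ (as a Cauchy principal value when $n<0$, in line with the definition of $\calE\calI$) gives the term-wise evaluation
\begin{equation*}
\sqrt{Q(X)}\int_{c_X^{c_+,T_+}} f^+\,dz_X \;=\; a^+_{\ell_X}(0)\bigl(\log T_+-\log c_+\bigr) \;+\; \sum_{n\ne 0}a^+_{\ell_X}(n)e^{2\pi inr_+}\bigl(\calE\calI(2\pi nc_+)-\calE\calI(2\pi nT_+)\bigr).
\end{equation*}
The $\calE\calI(2\pi nc_+)$ and $\log c_+$ contributions are precisely those appearing in Definition~\ref{reg-def1}, so the theorem reduces to showing that the $\psi$-integral on the right produces exactly the remaining $T_+$-dependent terms $\sum_{n\ne 0}a^+_{\ell_X}(n)e^{2\pi inr_+}\calE\calI(2\pi nT_+)$ together with the $\log T_+$-contribution.

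The core computation is thus the evaluation of
\begin{equation*}
I_m(T,\alpha) \;:=\; \int_{iT/\alpha}^{iT/\alpha+1} e^{2\pi imz}\bigl(\psi(z)+\psi(1-z)+2\log\alpha\bigr)\,dz, \qquad m\in\Z,
\end{equation*}
where the index $m=n\alpha_X$ records the $n$-th Fourier mode of $f^+_{\ell_X}$ after the rescaling $z\mapsto\alpha_X z+r_+$. For $m=0$ the antiderivative $\int\psi(z)\,dz=\log\Gamma(z)$ together with $\Gamma(w+1)=w\Gamma(w)$ gives $I_0(T,\alpha)=\log(iT/\alpha)+\log(-iT/\alpha)+2\log\alpha=2\log T$. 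For $m\ne 0$ one inserts the Mittag--Leffler expansion $\psi(z)+\psi(1-z)=-2\gamma+\sum_{k\ge 0}\bigl[\tfrac{2}{k+1}-\tfrac{1}{k+z}-\tfrac{1}{k+1-z}\bigr]$: the constants and $2\log\alpha$ integrate to zero against $e^{2\pi imz}$ on an interval of unit length by periodicity, while the substitutions $w=z+k$ and $w=k+1-z$ telescope the two pole sums into
\begin{equation*}
-\int_{iT/\alpha}^{iT/\alpha+\infty}\frac{e^{2\pi imw}}{w}\,dw \;-\; \int_{-iT/\alpha}^{\infty-iT/\alpha}\frac{e^{-2\pi imw}}{w}\,dw.
\end{equation*}
For $m>0$ both integrals converge absolutely; deforming each to the positive real axis (no poles of $1/w$ in the intervening quadrant, and the quarter-circles around the origin contribute purely imaginary constants that cancel between the two pieces) and substituting $u=\mp 2\pi imw$ identifies both half-line integrals with $E_1(2\pi mT/\alpha)$, yielding $I_m(T,\alpha)=-2\,\calE\calI(2\pi mT/\alpha)$. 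The case $m<0$ follows by the reflection $z\mapsto 1-z$ combined with complex conjugation, with the Cauchy principal value arising from the origin correction.

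The main technical obstacle is the rigorous justification of the telescoping step for $m\ne 0$: the individual pole-term integrals are only conditionally convergent in $k$ along the horizontal segment, so the interchange of summation and integration requires either an Abel-type summation argument or a direct tail estimate on the Mittag--Leffler series uniform in the contour. Once $I_m(T,\alpha)$ is established, the stated identity follows by summing over all Fourier modes at both cusps, and the specialization to $T_\pm=c_\pm$ for $f\in M_0^!(\G)$ is immediate since both the truncated geodesic integrals and the $f^-$-integrals vanish in that limit.
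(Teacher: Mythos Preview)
Your approach is essentially the same as the paper's: both insert the Mittag--Leffler expansion \eqref{digamma} of $\psi$ into the horizontal integral, telescope the resulting pole sum into half-line integrals, and identify those with the exponential integrals $\mathcal{EI}$. Two organizational differences are worth noting.

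First, the paper separates the argument into two steps: it proves independence of $T$ directly via Cauchy's theorem together with the functional equation $\psi(z+1)=\psi(z)+\tfrac{1}{z}$ (see \eqref{eq:T22}--\eqref{eq:T33}), and only then evaluates the $\psi$-integral at $T=c$. You compute at arbitrary $T$ mode-by-mode. Second, and more to the point of the technical obstacle you flag: the paper applies the telescoping not to a single exponential $e^{2\pi imz}$ but to $f_0=f-a_{\ell_X}(0)$ as a whole (equation \eqref{psi-principle}). Since $f_0$ has vanishing constant term, each segment integral $\int_{ic/\alpha+k}^{ic/\alpha+k+1}\frac{f_0(\alpha z)}{z}\,dz$ is $O(1/k^2)$ (subtract the constant $1/(ic/\alpha+k)$ from $1/z$ and use $\int f_0=0$), so the telescoped sum converges absolutely and the interchange is immediate. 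This cleanly dissolves the conditional-convergence issue you identify in the mode-by-mode version.

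One genuine slip: your claim that ``for $m>0$ both integrals converge absolutely'' is false. On the horizontal line $\Im(w)=T/\alpha$, the factor $e^{2\pi imw}$ has constant modulus $e^{-2\pi mT/\alpha}$, so $\int_{iT/\alpha}^{iT/\alpha+\infty}\frac{e^{2\pi imw}}{w}\,dw$ behaves like $\int^\infty\frac{dx}{x}$ in absolute value and converges only conditionally. The identification with $E_1$ still holds---the paper simply quotes \cite[5.1.30/31]{AbSt} rather than performing a contour deformation---but your justification needs to be adjusted. Your description of ``quarter-circles around the origin'' is also off: the contours stay at height $\pm T/\alpha$ and never approach the pole of $1/w$.
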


\begin{proof}

For simplicity, we assume $c_X$ is the imaginary axis and also $f \in M^!_0(\G)$. We first show the independence of $T=T_+$. For that we also assume for the moment $\alpha_X=1$. Pick another $T_1>0$. By Cauchy's theorem we have
\begin{equation}\label{eq:T22}
 - \int_{iT}^{iT+1} f(z) \psi(z) dz =
- \int_{iT_1+1}^{iT+1}  f(z) \psi(z) dz - \int_{iT_1}^{iT_1+1}  f(z) \psi(z) dz
+ \int_{iT_1}^{iT}  f(z) \psi(z) dz.
\end{equation}
For the first integral on the right hand side we see using $\psi(z+1) = \psi(z) + 1/z$ that
\begin{equation}\label{eq:T33}
- \int_{iT_1+1}^{iT+1}  f(z) \psi(z) dz = -\int_{iT_1}^{iT}  f(z) \psi(z) dz -  \int_{iT_1}^{iT}f(z) \frac{dz}{z}.
\end{equation}
We also have
\begin{equation}\label{eq:T11}
 \int_{c_X^{c_+,T}}   f(z)  dz_X = \int_{ic_+}^{iT} f(z) \frac{dz}{z}.
\end{equation}
Then by \eqref{eq:T11}, \eqref{eq:T22}, and  \eqref{eq:T33} we conclude
\begin{equation*}
\frac12 \left(\int_{c_X^{c_+,T}}   f(z)  dz_X -\int_{iT}^{iT+1} f(z) \psi(z) dz \right) = \frac12 \left(
\int_{c_X^{c_+,T_1}}   f(z)  dz_X - \int_{iT_1}^{iT_1+1} f(z) \psi(z) dz \right).
\end{equation*}
The same holds for $\psi(z)$ replaced by $\psi(1-z)$. This shows the independence of $T$. Now assume $T=c_+=c$. With $\alpha = \alpha_X$ we first have
\begin{align*}
- a_{\ell_X}(0) \int_{ic/\alpha}^{ic/\alpha+1} \left( \psi(z) +  \psi(1-z) \right) dz &= - a_{\ell_X}(0) \left[ \log\left( \tfrac{\G(1+ic/\alpha)}{\G(ic/\alpha)}\right) - \log\left(\tfrac{\G(1-(1+ic/\alpha))}{ \G(1-ic/\alpha)}\right)\right]  \\ &=-2 a_{\ell_X}(0)  \log(c/\alpha).
 \end{align*}
 Now consider $f_0(z)= f(z)-a_{\ell_X}(0)$. Then from \eqref{digamma} we see
\begin{equation}
\label{psi-principle}
 \int_{ic/\alpha}^{ic/\alpha+1} f_0(\alpha z) (\psi(z) + \psi(1-z)) dz = -\int_{ic/\alpha}^{ic/\alpha+ \infty} f_0(\alpha z) \frac{dz}{z} +  \int_{ic/\alpha}^{ic/\alpha - \infty} f_0(\alpha z) \frac{dz}{z}.
\end{equation}
Plugging in the Fourier expansion of $f_0$ we obtain
\begin{multline*}
-\int_{ic/\alpha}^{ic/\alpha + \infty} f_0(\alpha z) \frac{dz}{z} +  \int_{ic/\alpha}^{ic/\alpha - \infty} f_0(\alpha z) \frac{dz}{z} \\ =-  \sum_{n \ne 0} a_{\ell_X}(n) \left( \int_{ ic}^{ic+ \infty}
 e^{2 \pi i n z} \frac{dz}{z} + \overline{\int_{ ic}^{ic+ \infty}e^{2 \pi i n z} \frac{dz}{z}} \right).
\end{multline*}
According to \cite[Equations 5.1.30/31]{AbSt}, we have
\[
\int_{ ic}^{ic+ \infty} e^{2 \pi i n z} \frac{dz}{z}=
\begin{cases}
E_1(2\pi nc ) & \text{if $n>0$} \\
-\Ei(2\pi |n|c) - i \pi & \text{ if $n<0$}.
\end{cases}
\]
So, finally,
\[
-  \frac12 \int_{ic}^{ic+1} f(z) (\psi(z) + \psi(1-z)+2 \log \alpha) dz = - a_{\ell_X}(0) \log(c) + \sum_{n \ne 0} a_{\ell_X}(n) \mathcal{EI}(2\pi nc).
\]
Carrying out the same analysis for the other cusp $\ell_{-X}$ completes the proof of the theorem.
\end{proof}

\begin{remark}
We consider $j_1(z) \in M_0^!(\SL_2(\Z))$. Then applying Theorem~\ref{reg-def2} gives
\[
\int_0^{\infty,reg} j_1(iy) \frac{dy}{y} = - \int_{i}^{i+1} j_1(z) \left(\psi(z) + \psi(1-z)\right) dz.
\]
This is exactly Zagier's regularization for the `central $L$-value of $j_1$'. He arrived to this formula by  the following heuristic considerations. We need to give a meaning to the expression
\[
2 \int_0^ij_1(z)\frac{dz}{z}.
\]
We deform the path of integration to the semicircle to the left (respectively right) of the imaginary axis starting at $0$ and ending at $i$. Under the transformation $z \to -1/z$ this path becomes the horizontal half line in the upper half plane beginning at $\infty$ (respectively $-\infty$) ending at $i$. Hence we obtain
\[
 \int_{i}^{i+\infty} j_1(z)\frac{dz}{z} +  \int_{i}^{i-\infty} j_1(z)\frac{dz}{z}
\]
But now these integrals converge, and by \eqref{psi-principle} we obtain
\[
-\int_{i}^{i+1} j_1(z) \left(\psi(z) + \psi(1-z)\right) dz = -2 \Re\left(  \int_{i}^{i+1} j_1(z) \psi(z) dz \right).
\]
Here we used that the Fourier coefficients of $j_1$ are real.
In fact, one can show that this is equal to
$
-2 \Re  \int_{\rho^2}^{{\rho}} j_1(z) \psi(z) \,dz
$,
where $\rho = e^{2\pi i/6}$.
\end{remark}

\begin{remark}
We work out the trace $\tr_{m^2,0}(1)$ for the constant function $1$. We have
\[
\tr_{m^2,0}(1) =\frac{1}{\pi m} \sum_{k=1}^{2m \eps_\ell} \log \frac{(k\beta_{\ell},2m)}{2m}.
\]
\end{remark}

\subsubsection{Complementary trace}

Assume that $X$ with $Q(X)$ gives rise to an infinite geodesic, that is $\bar{\G}_X =1$.
Let $f \in H^+_0(\G)$ be a weak Maass form with holomorphic Fourier coefficients $a_{\ell}^+(n)$. Then we define its {\em complementary trace} for $m \in N (\Q^{\times})^2$ and $h\in L'/L$ by
\[
\tr^c_{m,h}(f) = \sum_{ X \in \G \back L_{m,h}} \sum_{n   <0} a^+_{\ell_X}(n) e^{2\pi i \re(c(X))n } +  \sum_{n <0} a^+_{\ell_{-X}}(n) e^{2\pi i \re(c(-X))n }.
\]
Note that in \cite{BFCrelle} this quantity is denoted by $\tr_{m,h}(f)$. We have (see \cite{BFCrelle}, Proposition~4.7)
\begin{multline*}
\tr^c_{m,h}(f) =
2 \sqrt{m/N}  \sum_{\ell \in \G \back \Iso(V)} \eps_{\ell}  \\ \times \left[\delta_{\ell}(m,h) \sum_{n \in \tfrac{2}{\beta_{\ell}}\sqrt{m/N} \Z_{<0} } a^+_{\ell}(n) e^{2\pi i  r_+ n }   +  \delta_{\ell}(m,-h)
\sum_{n  \in \tfrac{2}{\beta_{\ell}}\sqrt{m/N}\Z_{<0} }
a^+_{\ell}(n) e^{2\pi i  r_- n } \right].
\end{multline*}
Here $\delta_{\ell}(m,h) = 1$ if the $(m,h)$-th Fourier coefficient $b_{\ell}(m,h)$ of $\tilde\Theta_{K_\ell}(\tau)$ is nonzero, that is, if there exists a vector $X \in L_{m,h}$ such that $c_X$ ends at the cusp $\ell$. In that case $r_{\pm}$ is the real part of any such $X$. In particular, $ \tr^c_{m,h}(f) = 0$ for  $m \gg 0$.

\subsection{Average values of harmonic weak Maass forms}

We define the regularized `average value' of a (suitable) function $f$ on $M$ by
\[
 \int_M^{reg}f(z) d\mu(z) =  \lim_{T \to \infty} \int_{M_T} f(z) d\mu(z)
\]
as in \cite{BFCrelle}, (4.6). By Remark~4.9 in \cite{BFCrelle} we have for weakly holomorphic $f$ that
\begin{equation}\label{reg-F-value}
\int^{reg}_{M} f(z) d\mu(z)
=-8\pi \sum_{\ell \in \G \back \Iso(V)}   \alpha_\ell
\sum_{\substack{N\in \Z_{\geq 0}}} a_\ell(-N)\sigma_1(N).
\end{equation}
Here $\sigma_0(0) = -1/24$. The formula also holds for $f \in H_0^+(\G)$. Indeed, we let $\calE_2(z)=-\frac{3}{\pi y}-24\sum_{n=0}^\infty \sigma_1(n)e^{2\pi i nz}$ be the (non-holomorphic) Eisenstein series $\calE_2(z)$ of weight $2$ for $\Sl_2(\Z)$. Then $\bar\partial (\calE_{2}(z)dz)= - \tfrac{3}{\pi} d\mu(z)$. Hence by Stokes's theorem we obtain
\[
\int^{reg}_{M} f(z) d\mu(z) = - \frac{\pi}{3} \lim_{T \to \infty}\int_{\partial M_T} f(z) \calE_2(z)dz + \int_{M} (\bar \partial f(z)) \calE_{2}(z)dz,
\]
The first term gives \eqref{reg-F-value}, while the second vanishes as the Petersson scalar product of the cusp form $\xi_0(f)$ against an Eisenstein series.

We define as in \cite{BFCrelle} the trace of index $(0,h)$ by
\[
\tr_{0,h}(f) = - \delta_{h,0} \frac{1}{2\pi}  \int_M^{reg}f(z) d\mu(z).
\]
Here $\delta_{h,0}$ is Kronecker delta. For $f=1$ we also write $\vol(M) =  - \frac{1}{2\pi}  \int_M d\mu(z)$.

\section{The main result}\label{sec:Main-results}

We are now ready to state the main result of this paper. It will be proved using the regularized theta lift in Sections~\ref{sec:Green} and \ref{sec:Fourier}.

\begin{theorem}\label{th:Main-Maass}
Let $h \in L'/L$. Let $f \in H^+_0(\G)$ be a weak Maass form and assume that the constant coefficients $a_{\ell}^+(0)$ vanish at all cusps $\ell$.
Then the generating series
\begin{align*}
H_h(\tau,f):= &-2 \sqrt{v} \tr_{0,h}(f) \\
&+  \sum_{m<0}\tr_{m,h}(f) \frac{\erfc(2\sqrt{\pi |m|v}) }{2\sqrt{|m|}}e(m{\tau})\\
& + \sum_{m>0}  \tr_{m,h} (f) e(m\tau) \\
& +2 \sum_{m>0} \tr^c_{Nm^2,h}(f) \left( \int_0^{\sqrt{v}} e^{4\pi Nm^2 w^2} dw \right) e(Nm^2\tau)
\end{align*}
defines the $h$-component of a weak Maass form of weight $1/2$ for the representation $\rho_L$.
If $f$ has non-zero constant coefficients then one has to add
\begin{multline*}
-  \frac1{\sqrt{N}\pi }  \sum_{\ell \in \G \back \Iso(V)} a^+_{\ell}(0)\eps_{\ell} \biggl[ \frac12\left( \log(4 \beta_{\ell}^2 \pi v) +\g + \psi(k_{\ell}/\beta_{\ell})+\psi(1-k_{\ell}/\beta_{\ell})\right) \\
+ \sum_{m>0} b_{\ell}(Nm^2,h) \mathcal{F}(2 \sqrt{\pi v N}m) e(Nm^2\tau) \biggr]
\end{multline*}
to the generating series. Here $b_{\ell}(m,h)$ denotes the $(m,h)$-th coefficient of $\tilde\Theta_{K_\ell}(\tau)$,
and
$k_{\ell}$ is defined by $ \ell \cap (L+h) = \Z \beta_{\ell} u_{\ell} + k_{\ell}u_{\ell}$ and $0 \leq k_{\ell} < \beta_{\ell}$. Furthermore, we (formally) set $\psi(0)=-\g$, which is justified since $-\g$ is the constant term of the Laurent expansion of $\psi$ at $0$. Finally, we have set
\[
\mathcal{F}(t):= \log t - \sqrt{\pi} \int_0^t e^{w^2} \erfc(w) dw  + \tfrac12 \log(2) + \tfrac14 \g,
\]
where $\erfc(w) = \tfrac{2}{\sqrt{\pi}} \int_w^{\infty} e^{-t^2} dt$ is the complementary error function. In particular, we have
\[
\Delta_\tau H_h(\tau,f) =
-\sum_{\ell\in \Gamma\bs \Iso(V)}\frac{a^+_\ell(0) \eps_\ell }{4\sqrt{N}\pi }\theta_{K_{\ell,\bar{h}}}(\tau).
\]

\end{theorem}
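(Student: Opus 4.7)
The plan is to realize $H_h(\tau,f)$ as the $h$-component of the regularized theta lift
\[
I_{1/2}(\tau,f) = \int_{\Gamma\backslash\mathbb{H}}^{reg} f(z)\,\Theta_L(\tau,z,\varphi_0)\,d\mu(z),
\]
which is introduced in the introduction via either the Borcherds/Harvey--Moore truncation or the Kudla--Rallis spectral regularization (the two are shown to agree, up to unary Jacobi theta series attached to the cusp constant terms). Once the identification $H_h = I_{1/2}(\cdot,f)_h$ is established, the transformation law of weight $1/2$ for $\rho_L$ follows formally from the known transformation law of $\Theta_L$ in $\tau$, provided one verifies that the regularization procedure is $\Gamma$-equivariant and the cut-off $\calF_T$ depends only on $z$. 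The linear exponential growth condition at $\infty$ follows from polynomial bounds on Fourier coefficients of $\Theta_L$ and the regularization limits.

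The bulk of the argument is to compute the Fourier expansion of $I_{1/2}(\tau,f)$ and match it with the explicit series $H_h$. For this I would unfold $\Theta_L(\tau,z,\varphi_0)$ over $\Gamma$-orbits in $L_{m,h}$. For $m<0$ and for $m>0$ non-square, each orbit gives a convergent integral $\int_{\Gamma_X\backslash \mathbb{H}} f(z)\varphi_0(X,z)\,d\mu(z)$, which is evaluated via the Green function $\eta(X,z)$ using the theorem cited in the introduction. Because $f\in H_0^+(\Gamma)$ satisfies $\Delta_z f = 0$ (with $\xi_0(f)$ a cusp form), the Laplace eigenvalue term in that theorem is absent, and one reads off that the $e(m\tau)$-coefficient is $\tr_{m,h}(f)$ for $m>0$ non-square, while for $m<0$ the Gaussian factor combined with the $\beta_{1/2}$-weight produces the $\erfc$-coefficient $\tfrac{1}{2\sqrt{|m|}}\erfc(2\sqrt{\pi|m|v})\,\tr_{m,h}(f)$.

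The main obstacle is the square index $m=Nk^2$. Here the stabilizer $\Gamma_X$ may be trivial and the naive orbit integral diverges, so one must analyze the truncated integral $\int_{\calF_T} f(z)\varphi_0(X,z)\,d\mu(z)$ directly. Splitting $f = f^+ + f^-$ at each cusp and using the unfolding of $\varphi_0$ near $\ell$, the non-holomorphic $f^-$-part yields the complementary trace $\tr^c_{Nk^2,h}(f)$ together with its associated error-type factor $\int_0^{\sqrt{v}} e^{4\pi Nk^2 w^2}\,dw$, while the $f^+$-part gives the regularized geodesic periods of Definition~3.1/Theorem~3.2, which by definition assemble into $\tr_{Nk^2,h}(f)$. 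It is in this step that the explicit form of $\mathcal{EI}$ and the $\psi$-based regularization proven in Theorem~3.2 enters, so that the square coefficients fit seamlessly with the non-square ones. I expect the bookkeeping of boundary limits to be the genuinely delicate point.

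Finally, if some $a^+_\ell(0)\neq 0$, the constant component of $f$ near $\ell$ generates an additional contribution to $I_{1/2}$ which, by the unfolding of the theta kernel against the isotropic line $\ell$, is governed by the boundary theta series $\tilde\Theta_{K_\ell}$. A calculation analogous to the one yielding \eqref{reg-F-value} (via $\bar\partial(\calE_2(z)dz)$ and Stokes) produces the stated $(\log v + \psi + \g)$ correction at $v=0$ together with the series $\sum_{m>0} b_\ell(Nm^2,h)\mathcal{F}(2\sqrt{\pi vN}m)e(Nm^2\tau)$, with the function $\mathcal{F}$ arising from the indefinite integral of an incomplete Gamma kernel. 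The Laplacian identity at the end then follows by applying $\Delta_\tau$ to the explicit expression: the regular part of $H_h$ is annihilated by $\Delta_\tau$ (it is built out of $\xi_{1/2}$-preimages of the Kudla--Millson lift), and the only non-harmonic contribution comes from the $\log v$ and $\mathcal{F}$-terms, whose Laplacian collapses to $-\tfrac{a^+_\ell(0)\eps_\ell}{4\sqrt{N}\pi}\,\theta_{K_\ell,\bar h}(\tau)$ summed over cusps. This matches $\Delta_\tau H_h(\tau,f)$ as claimed.
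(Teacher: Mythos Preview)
Your overall architecture is right and matches the paper: one shows $H_h(\tau,f)=I^{reg}_h(\tau,f)$ by computing the Fourier expansion of the regularized lift orbit by orbit, and the weight-$1/2$ transformation law and growth come for free from the kernel. The elliptic ($m<0$) and non-split hyperbolic ($m>0$, non-square) cases are exactly as you describe, via the current equations for $\eta$.

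There is one factual slip and one substantive gap.

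\textbf{The slip.} The complementary trace $\tr^c_{Nk^2,h}(f)$ is \emph{not} produced by the non-holomorphic piece $f^-$. By definition it is built from the negative-index coefficients $a^+_\ell(n)$, $n<0$, of the holomorphic part; in the paper's computation the $a^-_\ell(n)$ contributions to the boundary term actually vanish in the limit $T\to\infty$, while the surviving terms with $a^+_\ell(n)$ for $n<0$ give the $\int_0^{\sqrt{v}}e^{4\pi Nk^2 w^2}\,dw$ factor. So your attribution of the two pieces is reversed.

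\textbf{The gap.} For square index the paper does \emph{not} ``unfold $\varphi_0$ near $\ell$'' and then invoke Definition~3.1/Theorem~3.2 to evaluate; rather, it applies the Green function $\eta$ once more. Via Stokes one gets
\[
\int_{M_T} f(z)\sum_{\gamma\in\bar\Gamma}\varphi_0(X,\tau,\gamma z)\,d\mu(z)
= \tfrac{1}{2}e(m\tau)\int_{c_X^T} f\,dz_X
+\tfrac{1}{2\pi i}\int_{\partial M_T} f\sum_{\gamma}\partial\eta(X,\tau,\gamma z)
+\tfrac{1}{2\pi i}\int_{\partial M_T}(\bar\partial f)\sum_{\gamma}\eta,
\]
and the entire difficulty is the asymptotics of the middle boundary term. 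The paper computes the periodized form $\sum_{\gamma\in\bar\Gamma_{\ell_X}}\partial\eta(X,\tau,\gamma z)$ by introducing an auxiliary $s$-parameter, splitting $\int_1^\infty=\int_0^\infty-\int_0^1$, evaluating the first piece via a Hurwitz-zeta sum (whence the digamma terms $\psi(z/\alpha)+\psi(1-z/\alpha)$ appear \emph{naturally}), and handling the second piece by Poisson summation. Only \emph{after} these terms emerge does one invoke Theorem~3.2 to recognize the combination of $\int_{c_X^T} f\,dz_X$ with the $\psi$-integral over the horizontal segment as the regularized period $\int^{reg}_{c_X}f\,dz_X$. Your outline hides this mechanism, and without it you have no way to see why the specific function $\mathcal{F}(t)=\log t-\sqrt{\pi}\int_0^t e^{w^2}\erfc(w)\,dw+\tfrac12\log 2+\tfrac14\gamma$ appears, nor why the $\log T$ divergence cancels exactly against the regularization counterterm $\frac{\log T}{\sqrt{N}}\sum_\ell a^+_\ell(0)\eps_\ell b_\ell(m,h)$.

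The same $\eta$-plus-Stokes method, not the $\calE_2$ trick you cite, handles the $m=0$ coefficient in the paper; the $\calE_2$ argument only gives the scalar $\int_M^{reg}f\,d\mu$. Finally, the Laplacian identity is not obtained in the paper by differentiating the explicit Fourier series: it is proved earlier and independently from $\Delta_\tau\Theta_L=\tfrac14\Delta_z\Theta_L$ together with a Stokes computation on $\int_M f\,\Theta_L(\tau,z,\Delta_z\varphi_0)\,d\mu$, which directly yields $-\sum_\ell \frac{a^+_\ell(0)\eps_\ell}{\sqrt{N}}\tilde\Theta_{K_\ell}$ without any reference to $\mathcal{F}$ or $\log v$.
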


As a special case we consider the constant function $f=1$.

\begin{theorem}\label{th:Main-con}
Let $h \in L'/L$. Then
\begin{align*}
H_h(\tau,1)=&-2 \sqrt{v} \vol(M) \\
&+\sum_{m<0} \deg Z(m,h)  \frac{\erfc(2\sqrt{\pi |m|v})}{2\sqrt{|m|}}{e(m{\tau})}\\
& + \sum_{\substack{m>0\\ m \notin N (\Q^{\times})^2} } \left(\sum_{X \in
\G \back L_{m,h}} \length(c(X)) \right) {e(m{\tau})}  \\
& + \sum_{m>0} \tr_{Nm^2,h}(1) q^{Nm^2} \\
&-  \frac1{\sqrt{N}\pi}  \sum_{\ell \in \G \back \Iso(V)} \eps_{\ell} \sum_{m>0} b_{\ell}(Nm^2,h)
 ( \mathcal{F}(2 \sqrt{\pi v N}m)) e(Nm^2\tau) \\
&-  \frac1{2\sqrt{N} \pi}  \sum_{\ell \in \G \back \Iso(V)} \eps_{\ell}  \left[  \log(4 \beta_{\ell}^2 \pi v) +\g + \psi(k_{\ell}/\beta_{\ell})+\psi(1-k_{\ell}/\beta_{\ell})\right].
\end{align*}
defines the $h$-component of a weak Maass form for $\rho_L$ of weight $1/2$.
\end{theorem}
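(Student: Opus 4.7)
The plan is to derive Theorem~\ref{th:Main-con} as an immediate specialization of Theorem~\ref{th:Main-Maass} to the constant function $f\equiv 1$, regarded as a harmonic weak Maass form in $H^+_0(\G)$ whose Fourier expansion at every cusp $\ell$ is $a^+_\ell(0)=1$, $a^+_\ell(n)=0$ for $n\ne 0$, and whose non-holomorphic part $f^-$ vanishes identically.

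First I would unwind each trace in the formula of Theorem~\ref{th:Main-Maass} under $f=1$. By definition $\tr_{0,h}(1)=\delta_{h,0}\vol(M)$, producing the leading $-2\sqrt v\,\vol(M)$ contribution (understood as being supported at $h=0$). For $m<0$ the definition of the Heegner trace gives directly $\tr_{m,h}(1)=\sum_{X\in\G\bs L_{m,h}}|\bar\G_X|^{-1}=\deg Z(m,h)$. For $m>0$ with $m\notin N(\Q^{\times})^2$ the stabilizer $\bar\G_X$ is infinite cyclic, $c(X)$ is a compact geodesic, and parametrizing a fundamental segment of $\bar\G_X\bs c_X$ identifies $\int_{c(X)}dz_X$ with an explicit multiple of the hyperbolic length $\length(c(X))$; this yields the non-square positive coefficients in the stated formula.

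Second, for square index $m=Nk^2$ the geodesic $c(X)$ is infinite and $\tr_{Nk^2,h}(1)$ must be computed via the regularized period of Definition~\ref{reg-def1}. Since $f=1$ has trivial $f^-$-part and all Fourier coefficients $a^+_{\ell_{\pm X}}(n)$ with $n\ne 0$ vanish, the only surviving terms in $\int^{reg}_{c_X}dz_X$ are the two logarithmic contributions, giving $a^+_{\ell_X}(0)\log c_++a^+_{\ell_{-X}}(0)\log c_-=\log c_++\log c_-$. Summing over $X\in\G\bs L_{Nk^2,h}$ and reindexing by cusps via the pair $(\beta_\ell,k_\ell,\eps_\ell)$ gives a closed-form expression for $\tr_{Nk^2,h}(1)$. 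Simultaneously, the complementary trace $\tr^c_{Nm^2,h}(1)$ in Theorem~\ref{th:Main-Maass} vanishes identically, since it is a sum over negative holomorphic Fourier coefficients of $f$, so the entire term carrying the factor $\int_0^{\sqrt v}e^{4\pi Nm^2 w^2}dw$ drops out.

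Third, the correction contribution in Theorem~\ref{th:Main-Maass} from non-vanishing constant terms applies with $a^+_\ell(0)=1$ uniformly over all cusps $\ell$, which directly produces both the $\mathcal{F}$-series indexed by $m>0$ and the $\tau$-independent piece $-\frac{1}{2\sqrt N\pi}\sum_\ell\eps_\ell\bigl[\log(4\beta_\ell^2\pi v)+\g+\psi(k_\ell/\beta_\ell)+\psi(1-k_\ell/\beta_\ell)\bigr]$ asserted in Theorem~\ref{th:Main-con}.

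The main obstacle is not analytic but combinatorial: one must reorganize the sum over $X\in\G\bs L_{Nk^2,h}$ of the logarithmic contributions $\log c_\pm$ at square index as a sum over cusps $\ell$, and verify that the multiplicities match the $(Nm^2,h)$-th coefficient $b_\ell(Nm^2,h)$ of $\tilde\Theta_{K_\ell}$. The natural dictionary is the assignment sending $X\in L_{Nk^2,h}$ to the pair of cusps $(\ell_X,\ell_{-X})$ at which $c_X$ terminates, with multiplicities controlled by $\beta_\ell$ and $\eps_\ell$. Once this matching is carried out, everything assembles into the generating series asserted by Theorem~\ref{th:Main-con}.
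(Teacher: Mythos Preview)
Your approach is correct and is exactly what the paper intends: Theorem~\ref{th:Main-con} is introduced in the text with ``As a special case we consider the constant function $f=1$,'' and is not given a separate proof. Specializing Theorem~\ref{th:Main-Maass} with $a^+_\ell(0)=1$ and $a^+_\ell(n)=0$ for $n\neq 0$ immediately yields $\tr^c_{Nm^2,h}(1)=0$, kills the $\int_0^{\sqrt v}e^{4\pi Nm^2w^2}dw$ term, identifies the negative and non-square positive coefficients with $\deg Z(m,h)$ and the geodesic lengths, and reproduces the $\mathcal{F}$- and $\psi$-correction from the added ``non-zero constant term'' block.

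However, the ``main obstacle'' you describe is not actually present. Theorem~\ref{th:Main-con} leaves the square-index coefficient as the abstract symbol $\tr_{Nm^2,h}(1)$; it does \emph{not} claim a closed-form evaluation in terms of cusps. The $b_\ell(Nm^2,h)\mathcal{F}(\cdot)$ terms and the $\psi(k_\ell/\beta_\ell)$ terms come directly from the correction block of Theorem~\ref{th:Main-Maass} upon setting $a^+_\ell(0)=1$; they are not obtained by reorganizing $\tr_{Nm^2,h}(1)$ and need no multiplicity-matching with $b_\ell$. So the reindexing argument you sketch in the final paragraph is unnecessary for this theorem (the cusp--vector dictionary you allude to is the content of Lemma~8.2 and \cite[Lemma~3.7]{FCompo}, used inside the proof of Theorem~\ref{th:Main-Maass} itself, and is already absorbed there). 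The explicit evaluation of $\tr_{m^2,0}(1)$ appears only in a separate remark after Theorem~\ref{reg-def2}, not as part of the proof of Theorem~\ref{th:Main-con}.
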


\begin{remark}
The special functions $\mathcal{F}$ above and $\alpha$ in \eqref{DIT2} differ by a constant, since both map to $\beta_{3/2}$ under $\xi_{1/2}$. This constant is absorbed by the undefined term $\tr_{Nm^2}(1)$ in \eqref{DIT2}.
\end{remark}

\begin{remark}
Applying $\xi_{1/2}$ to $H_h(\tau,f)$ we recover the generating series for the traces of modular functions over CM points obtained in \cite{BFCrelle} (and \cite{FCompo} for $f=1$) which in turn generalized the results of Zagier \cite{Zagier,ZagierTr}.
\end{remark}

\begin{example}
We recover the theorems in the introduction by considering the lattice of Example \ref{nicelattice} for $N=p$ using  Example \ref{ex:iso}.
Alternatively, one can consider for $N=1$ the lattice
\[
L =\left\{\zxz{b}{2c}{-2ap}{-b}; \, a,b,c\in \Z \right\}
\]
and employ the same arguments as in \cite[Section~6]{BFCrelle}.
\end{example}

\begin{example}
We consider for $N=1$ the lattice $L$ of Example~\ref{nicelattice}. Let $h\in L'/L\cong\Z/2\Z$ be the non-trivial element.
Then for $m\in \Z_{>0}$ we have
\[
\tr_{m,L}(1) =2 H(4m) \qquad \text{and} \qquad\tr_{-m/4,L}(1)+ \tr_{-m/4,L+h}(1)= 2 H(m),
\]
where $H(m)$ is the Kronecker-Hurwitz class number, see e.g.~\cite[Section 3]{FCompo}. We let $r_3(m)$ be the representation number of $m$ as the sum of three squares. Then the famous class number relation states
$
r_3(m) = 12 \left(H(4m) - 2H(m) \right)$.
Hence if we define
\[
H(\tau):=6 \big( 2H_{L}(4\tau,1)+ 2H_{L+h}(4\tau,1) -H_L(\tau,1)\big),
\]
we obtain
\[
\xi_{1/2} H(\tau) = \theta^3(\tau).
\]

\end{example}

\section{Theta series and the regularized theta lift}\label{sec: reg-theta}

In this section we define regularized theta lifts of automorphic functions with singularities on $M$ against the theta function $\Theta_L(\tau,z,\varphi_0)$.

\subsection{Some Schwartz functions}

We consider the standard Gaussian $\varphi_0$ on $V(\R)$,
\begin{equation}
\varphi_0(X,z) = e^{-\pi (X,X)_z},
\end{equation}
where $(X,X)_z$ is the majorant associated to $z \in D$ which is given by
\begin{equation}\label{Rformel}
(X,X)_z= (X,X)+ (X,X(z))^2.
\end{equation}
Hence $(X,X)_z = -(X,X)+2R(X,z)$. Recall that the Schwartz function $\varphi_0$ has weight $1/2$ under the Weil representation acting on the space of Schwartz functions on $V(\R)$, see e.g. \cite{Shintani}. Accordingly, for $\tau = u+ iv \in \h$, we define
\begin{align}
\varphi_0(X,\tau,z) = \sqrt{v} \varphi_0(\sqrt{v}X,z) e^{\pi i (X,X) u} =  \sqrt{v}
e^{\pi i (X,X)_{\tau,z}},
\end{align}
where
\begin{equation}
(X,X)_{\tau,z} = u(X,X) + iv(X,X)_z
= (X,X)\bar{\tau} + 2ivR(X,z).
\end{equation}
To distinguish between the Laplacians acting on functions in the two
variables $z\in D$ and $\tau\in \H$, we often write $\Delta_z=\Delta_{0,z} = -y^2
\left( \tfrac{\partial^2}{\partial x^2} + \tfrac{\partial^2}{\partial
    y^2}\right)$ for the hyperbolic Laplacian of weight $0$ on $D$,
and write $\Delta_{\tau} =\Delta_{1/2,\tau} $
for the hyperbolic Laplacian on $\h$ of weight
 $1/2$ as in \eqref{defdelta}. We have
\[
- \Delta_{1/2,\tau} = L_{5/2} R_{1/2} +\frac12 =
R_{-3/2}L_{1/2},
\]
where $R_{k} = 2i \tfrac{\partial}{\partial \tau} +kv^{-1}$ and $L_k
= -2i v^2 \frac{\partial}{\partial \bar{\tau}}$ are the weight $k$
raising and lowering operators acting on functions on $\h$.
The following lemma is well known, see e.g. \cite{Shintani}, \cite[p. 205, eqn. 2.10]{KS}.

\begin{lemma}\label{Laplace}
We have
\begin{equation*}
\Delta_{\tau} \varphi_0(X,\tau,z) = \frac14\Delta_z \varphi_0(X,\tau,z).
\end{equation*}
\end{lemma}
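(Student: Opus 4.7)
The plan is to compute both sides directly and reduce to two pointwise identities for the function $R(X,z)$. With $Q:=(X,X)$ and $R:=R(X,z)$ we have $\varphi_0 = v^{1/2}\exp(\pi iQu + \pi vQ - 2\pi vR)$, in which the only $z$-dependence is through $R$ and the only $\tau$-dependence comes from $u$ and $v$. On the $\tau$-side I would use the factorization $-\Delta_{1/2,\tau} = R_{-3/2}L_{1/2}$. A short computation using $\partial_{\bar\tau} v = i/2$ shows that the $Q$-terms cancel in $\partial_{\bar\tau}\varphi_0$, leaving $L_{1/2}\varphi_0 = \bigl(\tfrac{v}{2} - \pi v^2(X,X(z))^2\bigr)\varphi_0$. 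Applying $R_{-3/2} = 2i\partial_\tau - 3/(2v)$ and substituting $(X,X(z))^2 = 2(R-Q)$ yields
\[
\Delta_\tau \varphi_0 = \bigl(4\pi^2 v^2 R(Q-R) + \pi v(3R - 2Q)\bigr)\varphi_0.
\]

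On the $z$-side, since only $R$ depends on $z$, a direct calculation gives
\[
\Delta_z\varphi_0 = \bigl(-2\pi v\,\Delta_z R - 4\pi^2 v^2\, y^2(R_x^2 + R_y^2)\bigr)\varphi_0.
\]
Matching the coefficients of $v$ and $v^2$ in the desired identity $\Delta_\tau\varphi_0 = \tfrac14\Delta_z\varphi_0$ reduces the lemma to the two pointwise identities
\begin{equation*}
\Delta_z R(X,z) = 4Q - 6R \qquad\text{and}\qquad y^2\bigl(R_x^2 + R_y^2\bigr) = 4R(R-Q).
\end{equation*}

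Both sides of each identity are invariant under the diagonal action of $G(\R) = \SL_2(\R)$ on $V(\R)\times \H$: $R$ and $Q$ are $G$-invariant functions, $\Delta_z$ is the hyperbolic Laplacian, and $y^2(R_x^2+R_y^2)$ is the squared hyperbolic gradient norm of $R$. It therefore suffices to verify the identities at the base point $z=i$, where $X(i) = N^{-1/2}\kzxz{0}{1}{-1}{0}$ and $R(X,i) = Q + \tfrac{N}{2}(x_3-x_2)^2$ for $X = \kzxz{x_1}{x_2}{x_3}{-x_1}$. This is a short explicit computation in coordinates. The main obstacle is careful book-keeping of signs, factors of $\pi$, and powers of $v$; conceptually the identity reflects that $\varphi_0$ is a joint eigenfunction for the Casimirs of the dual pair $(\mathfrak{sl}_2,\mathfrak{o}(V))$ in the Weil representation, as alluded to in the cited references \cite{Shintani} and \cite{KS}.
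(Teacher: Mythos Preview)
Your proof is correct. The paper does not actually prove this lemma; it simply records it as well known and cites \cite{Shintani} and \cite[p.~205]{KS}. Your approach---computing $\Delta_\tau\varphi_0$ via $-\Delta_{1/2}=R_{-3/2}L_{1/2}$, computing $\Delta_z\varphi_0$ by the chain rule through $R$, and reducing to the two $G$-invariant identities $\Delta_z R = 4Q-6R$ and $y^2(R_x^2+R_y^2)=4R(R-Q)$ which can be checked at the basepoint $z=i$---is a clean, self-contained argument that the paper does not supply. One small point worth making explicit: to evaluate $\Delta_z R$ and the gradient norm at $z=i$ you need $R(X,z)$ (equivalently $(X,X(z))=\tfrac{\sqrt{N}}{y}(x_3|z|^2-2x_1x-x_2)$) in a neighborhood of $i$, not just the value $R(X,i)$; with this in hand both identities are short polynomial checks in $x_1,x_2,x_3$ and do go through. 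Also note that your $Q$ is the paper's $(X,X)=2Q(X)$, a harmless relabeling.
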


We define another Schwartz function by
\begin{equation}\label{KM-def}
\varphi_{1}(X,\tau,z) := -\frac1{\pi} L_{\frac12} \varphi_0(X,\tau,z).
\end{equation}
Hence $\varphi_1$ has weight $-3/2$.
\begin{lemma}\label{BFDuke}
We have
\[
\Delta_z \varphi_0(X,\tau,z) = 4\pi R_{-3/2}
\varphi_{1}(X,\tau,z).
\]
\end{lemma}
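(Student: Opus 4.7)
My plan is to combine Lemma~\ref{Laplace} with the given factorization of the weight $1/2$ Laplacian in a one-line computation; no additional input is needed.

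First, by Lemma~\ref{Laplace} we have
\[
\Delta_z \varphi_0(X,\tau,z) = 4\,\Delta_\tau \varphi_0(X,\tau,z),
\]
where $\Delta_\tau = \Delta_{1/2,\tau}$ is the weight $1/2$ hyperbolic Laplacian on $\h$. Next, I would invoke the factorization recorded just before Lemma~\ref{Laplace},
\[
-\Delta_{1/2,\tau} = R_{-3/2} L_{1/2},
\]
to obtain $\Delta_\tau \varphi_0 = -R_{-3/2} L_{1/2} \varphi_0$. Finally, substituting the definition $\varphi_1 = -\frac{1}{\pi} L_{1/2} \varphi_0$, i.e.\ $L_{1/2}\varphi_0 = -\pi \varphi_1$, yields
\[
\Delta_z \varphi_0 = 4\,\Delta_\tau \varphi_0 = -4\, R_{-3/2} L_{1/2} \varphi_0 = -4\, R_{-3/2}(-\pi \varphi_1) = 4\pi\, R_{-3/2} \varphi_1,
\]
which is the claim.

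Since both inputs are already established in the excerpt, there is no genuine obstacle here; the only thing to double-check is that the sign conventions for $L_k$, $R_k$ and $\Delta_k$ are consistent across equations~\eqref{defdelta}, the displayed factorization $-\Delta_{1/2,\tau}=R_{-3/2}L_{1/2}$, and the definition \eqref{KM-def} of $\varphi_1$. Once these signs line up as above, the lemma follows in a single line and no further analysis of the Gaussian $\varphi_0$ is required.
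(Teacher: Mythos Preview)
Your proof is correct; the paper actually states this lemma without proof, and your one-line derivation from Lemma~\ref{Laplace}, the factorization $-\Delta_{1/2,\tau}=R_{-3/2}L_{1/2}$, and the definition~\eqref{KM-def} is precisely the intended immediate argument.
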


\begin{remark}
\label{KM-remark}
The Schwartz function $\varphi_1^V=\varphi_1$ associated to the space $V$ of signature $(2,1)$ is very closely related to the Kudla-Millson Schwartz form $\varphi_{KM}^{V^-}$ for the space $V^-$, which is given by considering on $V$ the quadratic form $-Q(X)$ of signature $(1,2)$ (see also \cite{BFCrelle}, Section~7). The form $\varphi_{KM}^{V^-}$ has weight $3/2$, and we have
\[
\varphi_{KM}^{V^-}(X,\tau,z) = v^{-3/2}\overline{\varphi_{1}(X,\tau,z)} d\mu(z).
\]
Here $d\mu(z) = \tfrac{dx\,dy}{y^2}$ is the invariant volume form on $D$. Moreover, we see that
\[
\varphi_{KM}^{V^-}(X,\tau,z) =  -\frac1{\pi} \xi_{1/2} \varphi_0(X,\tau,z) d\mu(z),
\]
where $\xi_k f = v^{k-2} \overline{L_k f } = R_{-k} v^k \overline{f}$.
\end{remark}

\subsection{Theta series and theta lifts}

We let $\varphi$ be the Schwartz function $\varphi_0$ or $\varphi_{1}$ on $V(\R)$ of weight $k$ (equal to $1/2$ or $-3/2$). Then for $h \in L'/L$, we define a theta series by
\begin{align}
\theta_{h}(\tau,z,\varphi) = \sum_{X \in h + L} \varphi(X,\tau,z).
\end{align}
In the variable $z$ it is $\Gamma$-invariant and therefore descends to
a function on $M$. We also define a vector valued theta series by
\begin{align}
\Theta_L(\tau,z,\varphi) =
 \sum_{h\in L'/L} \theta_{h}(\tau,z,\varphi) \frake_{h}.
\end{align}
As a function of $\tau \in \h$, we have that $\Theta_L(\tau,z,\varphi) \in A_{k,L}$, see e.g.~\cite{Bo1}.

We let $f(z)$ be a $\G$-invariant function on $D$. We define the
theta lift of $f$ by
\begin{align}\label{theta-integral}
I(\tau,f) = \int_{M} f(z) \Theta_L(\tau,z,\varphi_0) d\mu(z)
=  \sum_{h \in
L'/L} \left( \int_M f(z) \theta_{h}(\tau,z,\varphi_0)  d\mu(z) \right)
\frake_h,
\end{align}
where $ d\mu(z) = \tfrac{dx \, dy}{y^2}$ is the invariant volume form on $D$. We also write
\begin{equation}\label{comp-int}
I_h(\tau,f) = \int_M f(z) \theta_{h}(\tau,z,\varphi_0) d\mu(z)
\end{equation}
for the individual components.  If $f$ is of sufficiently rapid decay
at the cusps, the theta integral
converges and defines a (in general non-holomorphic) modular form on
the upper half plane of weight $1/2$ of type $\rho_L$. In fact, for $f$ a Maass cusp form, the lift was considered by Kartok-Sarnak \cite{KS}. In the present paper, we are particularly interested in the case when $f$ is {\em not\/} of rapid decay at the cusps. Then the theta
integral typically does not converge and needs to be regularized. We will carry this out in the remainder of this section. In Sections~\ref{sec:Green} and \ref{sec:Fourier} will show that $I(\tau,f)$ for $f \in H_0^+(\G)$ will give the generating series for the traces given in Section~\ref{sec:Main-results}.

\subsection{The growth of the theta kernel}

The growth of the theta functions $\Theta_L(\tau,z,\varphi)$ near the cusps of $M$ is given as follows.

\begin{proposition}
\label{theta-decay}
As $ y \to \infty$ we have
\begin{itemize}
\item[(i)]
\[
\Theta_L(\tau,\sigma_{\ell} z,\varphi_0) = y \frac{1}{\sqrt{N}\beta_{\ell}} \tilde\Theta_{K_{\ell}}(\tau) + O(e^{-Cy^2}).
\]
In particular, if $\ell^{\perp} \cap (L+h) = \emptyset$, then $\theta_h(\tau,\sigma_{\ell} z,\varphi_0) =
O(e^{-Cy^2})$.
\item[(ii)]
\[
\Theta_L(\tau, \sigma_{\ell} z,\varphi_{1})= O(e^{-Cy^2}).
\]
\item[(iii)]
\[
\Theta_L(\tau,\sigma_{\ell} z,\Delta_z \varphi_{0})= O(e^{-Cy^2}).
\]
\end{itemize}

In particular, $\theta_h(\tau,z,\varphi_{1})$ and  $\theta_h(\tau,z,\Delta_z \varphi_{0})$ are `square exponentially' decreasing at all cusps of $M$.

\end{proposition}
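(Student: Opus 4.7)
The plan is to do the standard Siegel theta series computation at a cusp via Poisson summation in the isotropic direction, as in \cite{Bo1,Br2}. Using the $G(\Q)$-equivariance of $X(z)$ and a relabelling of the lattice sum, we may assume $\sigma_\ell=\id$ and $\ell=\ell_0=\R u_0$. Fix an isotropic vector $u_0'\in V(\R)$ with $(u_0,u_0')=N$ and the resulting Witt decomposition $V(\R)=\R u_0\oplus W(\R)\oplus \R u_0'$; every $X\in V$ can then be written uniquely as $X=a u_0'+\mu+c u_0$ with $\mu\in W$, and $X\in\ell_0^\perp$ if and only if $a=0$. A direct calculation from the formulas for $X(iy)$ in Section~2.1 gives
\[
(X,X(iy))=\sqrt{N}\bigl(ay-c/(Ny)\bigr),\qquad (X,X)_{iy}=Na^2 y^2+(\mu,\mu)+c^2/(Ny^2).
\]

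For part (i), I would split the sum over $X\in L+h$ according to the value of $a=(X,u_0)/N$. The layers with $a\neq 0$ satisfy $(X,X)_{iy}\geq Na^2y^2$, so after performing the remaining Gaussian sums in $(\mu,c)$ they contribute $O(e^{-Cy^2})$; in particular if $\ell^\perp\cap(L+h)=\emptyset$ there are no other layers and the second assertion follows at once. Otherwise, the $a=0$ part is indexed (via \eqref{eq:cl2}) by cosets of $L\cap\ell=\Z\beta_\ell u_\ell$ parametrised by lifts of $\bar h\in K_{\ell_0}'/K_{\ell_0}$. For a fixed lift $\bar\mu$, I apply Poisson summation in the $c$-variable to the Gaussian $e^{-\pi v c^2/(Ny^2)}$: the $m=0$ mode produces a factor $\tfrac{y}{\beta_\ell}\sqrt{N/v}$, and the modes $m\neq 0$ give $O(e^{-\pi m^2 y^2/(vN)})$. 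Matching the $m=0$ contribution against the definition \eqref{eq:tildethetaK} of $\tilde\Theta_{K_\ell}$ yields the claimed main term $\frac{y}{\sqrt{N}\beta_\ell}\,\tilde\Theta_{K_\ell}(\tau)$ plus an exponentially small remainder.

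Parts (ii) and (iii) then follow formally. Since $\varphi_1=-\pi^{-1}L_{1/2}\varphi_0$ and $L_{1/2}=-2iv^2\partial_{\bar\tau}$ acts only in $\tau$, linearity of the theta sum gives $\Theta_L(\tau,\sigma_\ell z,\varphi_1)=-\pi^{-1}L_{1/2}\Theta_L(\tau,\sigma_\ell z,\varphi_0)$. The main term from (i) is $y/(\sqrt{N}\beta_\ell)$ times a holomorphic modular form of weight $1/2$, hence annihilated by $L_{1/2}$, and the operator preserves the $O(e^{-Cy^2})$ bound on the error since it does not touch $z$. Similarly, Lemma~\ref{Laplace} yields $\Theta_L(\tau,\sigma_\ell z,\Delta_z\varphi_0)=4\Delta_\tau\Theta_L(\tau,\sigma_\ell z,\varphi_0)$, and $\Delta_{1/2}$ also kills the holomorphic main term, leaving only an exponentially small remainder.

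The main obstacle is the bookkeeping inside part (i): one has to track the width $\beta_\ell$, the shift $k_\ell$ of the coset $h$ relative to $\ell\cap L$, and the exact sequence \eqref{eq:cl2} through the Poisson summation in order to produce precisely the normalisation $(\sqrt{N}\beta_\ell)^{-1}$ and the correct indexing by $K_\ell'/K_\ell$ appearing in \eqref{eq:tildethetaK}. The decay bounds themselves are then essentially immediate from the Gaussian factor $e^{-\pi v(X,X)_{iy}}$ once the Witt splitting is in place.
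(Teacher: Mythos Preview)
Your approach is essentially the same as the paper's: for (i) the paper simply invokes \cite[Theorem~5.2]{Bo1}, which is precisely the Poisson summation in the isotropic direction that you sketch, and your argument for (ii) via $L_{1/2}$ annihilating the holomorphic main term coincides with theirs verbatim. The only cosmetic difference is in (iii): the paper applies $R_{-3/2}$ to (ii) using Lemma~\ref{BFDuke}, whereas you use Lemma~\ref{Laplace} to pass to $\Delta_\tau$ directly; since $-\Delta_{1/2,\tau}=R_{-3/2}L_{1/2}$ these are the same computation. Two small slips to correct when you write it up: your explicit formulas at $z=iy$ have the $N$ in the wrong place (one finds $(X,X(iy))=\sqrt{N}(ay-c/y)$ and $(X,X)_{iy}=Na^2y^2+(\mu,\mu)+Nc^2/y^2$), and you should remark that the Poisson summation goes through for general $z=x+iy$, the $x$-dependence entering only as a phase that does not affect the $O(e^{-Cy^2})$ bounds.
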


\begin{proof}

This follows from a very special case of \cite[Theorem~5.2]{Bo1}. For $\varphi_0$ also see \cite[Theorem~2.4]{Br2}, and for $\varphi_{1}$ in view of Remark~\ref{KM-remark} see the proof of Proposition~4.1 in \cite{FCompo} and \cite[Proposition~4.1]{BFCrelle}. For convenience we sketch the proof.

i)
Since $\Theta_L(\tau,\sigma_{\ell} z,\varphi)= \Theta_{\sigma_\ell^{-1}L}(\tau,z,\varphi)$, it
is enough to consider the cusp $\infty=\ell_0$. A primitive norm $0$ vector of $\sigma_\ell^{-1}L$ in $\ell_0$ is given by $B_\ell:=\kzxz{0}{\beta_\ell}{0}{0}$.
We may now apply \cite[Theorem~5.2]{Bo1} for the lattice $\sigma_\ell^{-1}L$, the Schwartz function $\varphi_0$, and the primitive norm $0$ vector $B_\ell$ to obtain the assertion.

ii) Since $\varphi_{1} = -\tfrac{1}{\pi} L_{1/2}\varphi_0$ and since $\tilde \Theta_{K_{\ell}}(\tau)$ is holomorphic, we obtain (ii) by applying the lowering operator $L_{1/2}$ to (i).

iii) This follows from (ii) by applying the raising operator $R_{-3/2}$ to $\Theta_L(\tau,z,\varphi_{1})$.
\end{proof}

\subsection{Regularization using truncated fundamental domains}

Following an idea of Borcherds \cite{Bo1} and Harvey--Moore \cite{HM}, we regularize the theta integral by integrating over a truncated fundamental domain and then taking a limit.

If $h(s)$ is a meromorphic function in a neighborhood of $s_0\in \C$, we denote by
$\CT_{s=s_0} [h(s)]$ the constant term in the Laurent expansion of $h$ at $s=s_0$.
Let $\calF=\{z\in \H;\; \text{$|x|\leq 1/2$ and $|z|\geq 1$}\}$ be the standard fundamental domain for the action of $\Gamma(1)=\Sl_2(\Z)$ on the upper half plane. For a positive integer $a$ we put
\[
\calF^a:=\bigcup_{j=0}^{a-1}
 \zxz{1}{j}{0}{1}\calF.
\]
As before, let $\Gamma\subset \Gamma(1)$ be a congruence subgroup.
Recall that for $\ell\in \Iso(V)$ we chose $\sigma_\ell\in \Gamma(1)$ such that $\sigma_\ell \ell_0 = \ell$, and  $\alpha_\ell$ denotes the width of the cusp $\ell$, see Section~\ref{r:sigma}.


\begin{lemma}
We have the disjoint left coset decomposition
\[
\bar\Gamma(1)=\bigcup_{\ell\in \Gamma\bs \Iso(V)}\bigcup_{j\; (\alpha_\ell)}
\bar \Gamma  \sigma_\ell \zxz{1}{j}{0}{1}.
\]
\end{lemma}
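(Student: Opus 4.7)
The overall strategy is to exploit the transitive action of $\bar\Gamma(1)=\PSL_2(\Z)$ on $\Iso(V)\cong \P^1(\Q)$. The stabilizer of the base line $\ell_0=\Q u_0$ in $\bar\Gamma(1)$ is the infinite cyclic group $\langle T\rangle$ with $T=\kzxz{1}{1}{0}{1}$, and by the definition of $\alpha_\ell$ we have $\sigma_\ell^{-1}\bar\Gamma_\ell\sigma_\ell=\langle T^{\alpha_\ell}\rangle$, i.e.\ $\sigma_\ell\langle T^{\alpha_\ell}\rangle\sigma_\ell^{-1}\subset\bar\Gamma$. I plan to use these two facts to reduce an arbitrary $\gamma\in\bar\Gamma(1)$ to a representative of the claimed form and then verify uniqueness.

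For the covering statement: given $\gamma\in\bar\Gamma(1)$, set $\ell':=\gamma\ell_0\in\Iso(V)$ and let $\ell$ be its chosen representative in $\Gamma\bs\Iso(V)$. Pick $\delta\in\bar\Gamma$ with $\delta\sigma_\ell\ell_0=\ell'=\gamma\ell_0$; then $\sigma_\ell^{-1}\delta^{-1}\gamma$ fixes $\ell_0$, hence equals $T^k$ for some $k\in\Z$. Writing $k=q\alpha_\ell+j$ with $0\le j<\alpha_\ell$, the element $\sigma_\ell T^{q\alpha_\ell}\sigma_\ell^{-1}$ lies in $\bar\Gamma_\ell\subset\bar\Gamma$, so
\[
\gamma=\delta\cdot(\sigma_\ell T^{q\alpha_\ell}\sigma_\ell^{-1})\cdot \sigma_\ell T^j\in\bar\Gamma\,\sigma_\ell\zxz{1}{j}{0}{1},
\]
which proves the union exhausts $\bar\Gamma(1)$.

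For disjointness: suppose $\bar\Gamma\,\sigma_\ell T^j=\bar\Gamma\,\sigma_{\ell'}T^{j'}$. Applying both sides to $\ell_0$, and using that $T$ fixes $\ell_0$, shows that $\ell$ and $\ell'$ lie in the same $\bar\Gamma$-orbit in $\Iso(V)$; since we chose one $\sigma_\ell$ per orbit, this forces $\ell=\ell'$. Then $\sigma_\ell T^{j-j'}\sigma_\ell^{-1}$ lies in $\bar\Gamma\cap \sigma_\ell\,\bar\Gamma(1)_{\ell_0}\sigma_\ell^{-1}=\bar\Gamma_\ell=\sigma_\ell\langle T^{\alpha_\ell}\rangle\sigma_\ell^{-1}$, so $\alpha_\ell\mid j-j'$, and the range $0\le j,j'<\alpha_\ell$ forces $j=j'$.

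The main (and really only) subtlety is the careful passage between $\Gamma$ and its image $\bar\Gamma$ in $\PSL_2$: the $\sigma_\ell$ were selected in $\Sl_2(\Z)$, the widths $\alpha_\ell$ refer to $\sigma_\ell^{-1}\bar\Gamma_\ell\sigma_\ell$, and one must check that $\pm I$ does not create double-counting. Working entirely modulo $\pm I$ in $\bar\Gamma(1)$ throughout handles this uniformly. The only other point that requires a moment of thought is that $\ell'=\gamma\ell_0$ is only determined up to $\Gamma$-equivalence when we replace it by its orbit representative $\ell$; this is precisely what forces the correction factor $\delta\in\bar\Gamma$ in the covering step and is the reason that the decomposition is naturally indexed by $\Gamma\bs\Iso(V)$.
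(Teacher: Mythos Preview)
Your proof is correct. The paper actually states this lemma without proof, treating it as a standard fact about the combinatorics of cusps of congruence subgroups; your argument via the transitive action of $\bar\Gamma(1)$ on $\Iso(V)\cong\P^1(\Q)$ together with the orbit--stabilizer description $\sigma_\ell^{-1}\bar\Gamma_\ell\sigma_\ell=\langle T^{\alpha_\ell}\rangle$ is exactly the expected justification.
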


Consequently, a fundamental domain or the action of $\Gamma$ on $D$ is given by
\begin{align}
\label{eq:ti1.5}
\calF(\Gamma)= \bigcup_{\ell\in \Gamma\bs \Iso(V)}\sigma_\ell\calF^{\alpha_\ell}.
\end{align}
Moreover, if $f$ is of rapid decay, the theta integral \eqref{theta-integral} is given by
\begin{align}
\label{eq:ti2}
I(\tau,f) = \sum_{\ell\in \Gamma\bs \Iso(V)}
\int_{\calF^{\alpha_\ell}} f(\sigma_\ell z) \Theta_L(\tau,\sigma_\ell z,\varphi_0) d\mu(z).
\end{align}

Now assume that $f$ is a function on $M$ which is not necessarily decaying at the cusps.
For $T>0$, let we truncate $\calF^a$ and put $\calF^a_T:=\{z\in \calF^a;\; y\leq T\}$.

\begin{definition}
\label{def:iint}
We define the regularized theta lift of $f$ by
\begin{align}
I^{reg}(\tau,f)= \CT_{s=0}\left[I^{reg}(\tau,s,f)\right],
\end{align}
where
\begin{align}
\label{eq:ti3}
I^{reg}(\tau,s,f) = \sum_{\ell\in \Gamma\bs \Iso(V)}\lim_{T\to \infty}
\int_{\calF_T^{\alpha_\ell}} f(\sigma_\ell z) \Theta_L(\tau,\sigma_\ell z,\varphi_0) y^{-s}d\mu(z).
\end{align}
\end{definition}

Here $s$ is an additional complex variable. If $f$ is rapidly decaying at the cusps, then it is easily seen that the regularized theta lift agrees with the classical lift \eqref{eq:ti2}.
However, as we will now see, the regularized lift makes sense for a much wider class of functions $f$.

\begin{proposition}
\label{prop:reg1}
Let $f$ be a weak Maass form of weight zero for $\Gamma$ with eigenvalue $\lambda=s'(1-s')$.
Denote the constant term of the Fourier expansion of $f$ at the cusp $\ell$ by
$A_\ell y^{s'}+B_\ell y^{1-s'}$ with constants $A_\ell,B_\ell \in \C$.
Then $I^{reg}(\tau,s,f)$ converges locally uniformly in $s$ for $\Re(s)>\max(\Re(s'),1-\Re(s'))$ and defines an element of $A_{1/2,L}$. It has a meromorphic continuation to the whole $s$-plane, which is holomorphic in $s$ up to first order poles at $s=s'$ and $s=1-s'$. The function
\begin{align*}
&I^{reg}(\tau,s,f) -\frac{1}{\sqrt{N}}\sum_{\ell\in \Gamma\bs \Iso(V)} \eps_\ell \tilde\Theta_{K_{\ell}}(\tau)\left(\frac{A_\ell}{s-s'}+\frac{B_\ell}{s+s'-1}\right)
\end{align*}
has a holomorphic continuation to all $s\in \C$.
Moreover, the regularized theta lift $I^{reg}(\tau,f)$ defines an element of  $A_{1/2,L}$.
\end{proposition}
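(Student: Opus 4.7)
The plan is to use the cusp decomposition \eqref{eq:ti1.5} to rewrite $I^{reg}(\tau,s,f)$ as a sum over $\ell\in \Gamma\bs\Iso(V)$ of integrals over the strips $\calF^{\alpha_\ell}_T$, and then at each cusp to split both factors of the integrand by their leading behavior. Using Proposition~\ref{theta-decay}(i) I would write
\[
\Theta_L(\tau,\sigma_\ell z,\varphi_0)=\tfrac{y}{\sqrt N\,\beta_\ell}\tilde\Theta_{K_\ell}(\tau)+R_\ell(\tau,z),\qquad R_\ell(\tau,z)=O(e^{-Cy^2}),
\]
and split $f(\sigma_\ell z)=A_\ell y^{s'}+B_\ell y^{1-s'}+f_\ell^{\rm cusp}(z)$, where the cuspidal remainder is exponentially decaying in $y$ because the non-constant Fourier modes of a weak Maass form of eigenvalue $s'(1-s')$ involve only $K$-Bessel functions of non-zero argument.

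The four cross products that appear in the integrand fall into two classes. Three of them contain at least one exponentially decaying factor; combined with the polynomial $y^{-s}$ weight, these integrands are uniformly exponentially decaying in $y$ on compact sets of $s\in\C$. Hence by dominated convergence the corresponding truncated integrals stabilize as $T\to\infty$ and define entire holomorphic functions of $s$.

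The only delicate term is the pairing of the constant term of $f$ with the leading asymptotic of $\Theta_L$. A direct computation, using $\alpha_\ell/\beta_\ell=\eps_\ell$, gives
\[
\frac{\eps_\ell\,\tilde\Theta_{K_\ell}(\tau)}{\sqrt N}\int_1^T\!\bigl(A_\ell y^{s'-s-1}+B_\ell y^{-s'-s}\bigr)\,dy
=\frac{\eps_\ell\,\tilde\Theta_{K_\ell}(\tau)}{\sqrt N}\!\left[A_\ell\tfrac{T^{s'-s}-1}{s'-s}+B_\ell\tfrac{T^{1-s'-s}-1}{1-s'-s}\right].
\]
For $\Re(s)>\max(\Re s',1-\Re s')$ the $T$-dependent pieces vanish as $T\to\infty$, producing exactly the rational combination that appears in the proposition. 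Meromorphic continuation to all $s\in\C$ is now immediate from the explicit formula, and the first-order poles at $s=s'$ and $s=1-s'$ can be read off directly; subtracting the indicated rational function leaves a function with holomorphic continuation to $\C$.

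Finally, modularity in $\tau$ with respect to $\rho_L$ at weight $1/2$ is inherited from $\Theta_L(\tau,z,\varphi_0)$, since $z$-integration and truncation in $y$ do not interact with $\tau$; this property is preserved under meromorphic continuation and under $\CT_{s=0}$, so that $I^{reg}(\tau,f)\in A_{1/2,L}$. The main technical obstacle I anticipate is verifying the uniform-in-$s$ dominated convergence needed to extend the three easy cross terms holomorphically in $s$---in particular, one must check that the exponential-decay constants for $f_\ell^{\rm cusp}(z)$ and for $R_\ell(\tau,z)$ are controlled on compact subsets of $\C\times\H$, and that the argument remains valid at cusps $\ell$ with $\ell^\perp\cap(L+h)=\emptyset$, where the leading term of $\Theta_L$ already vanishes by Proposition~\ref{theta-decay}(i) and both $A_\ell,B_\ell$ may nevertheless be non-zero.
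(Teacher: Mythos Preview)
Your overall strategy is sound and essentially matches the paper's, but there is a genuine error in your treatment of the cross term $f_\ell^{\rm cusp}\times(\text{leading term of }\Theta_L)$. You assert that the non-constant Fourier modes of a weak Maass form ``involve only $K$-Bessel functions of non-zero argument'' and hence decay exponentially. This is false for \emph{weak} Maass forms: by definition they are allowed linear exponential growth at the cusps, and indeed the Niebur--Poincar\'e series $G_{-m}(z,s)$ considered in Section~\ref{sect:6} have $I$-Bessel terms in their Fourier expansions which grow like $e^{2\pi|m|y}$. Consequently $f_\ell^{\rm cusp}(z)$ may grow like $e^{Cy}$, and your claimed exponential decay for the integrand $f_\ell^{\rm cusp}(z)\cdot \tfrac{y}{\sqrt{N}\beta_\ell}\tilde\Theta_{K_\ell}(\tau)\,y^{-s}$ is not justified.

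The fix is simple and is exactly what the paper does: do not split $f$ at all. Split only $\Theta_L$ as in Proposition~\ref{theta-decay}(i). The term $f\cdot R_\ell$ converges absolutely for every $s$ because the square-exponential decay $O(e^{-Cy^2})$ of $R_\ell$ dominates the linear exponential growth $O(e^{Cy})$ of $f$. For the remaining term $f(\sigma_\ell z)\cdot \tfrac{y}{\sqrt{N}\beta_\ell}\tilde\Theta_{K_\ell}(\tau)$, first integrate over the compact set $\calF_1^{\alpha_\ell}$ (no problem) and then over the strip $1\le y<T$, $0\le x\le\alpha_\ell$. Since the leading theta term is independent of $x$, the $x$-integration annihilates all non-constant Fourier modes of $f$ and leaves precisely $\alpha_\ell\,a_\ell(0,y)=\alpha_\ell(A_\ell y^{s'}+B_\ell y^{1-s'})$. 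From there your explicit computation of the $y$-integral goes through verbatim. In other words, the troublesome cross term in your four-fold decomposition actually vanishes identically after $x$-integration, but your stated reason (exponential decay of $f_\ell^{\rm cusp}$) is wrong and should be replaced by this orthogonality argument. Your concern about cusps with $\ell^\perp\cap(L+h)=\emptyset$ is a non-issue: there the leading term already vanishes componentwise, and correspondingly so does the subtracted rational function.
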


\begin{proof}
It suffices to show that for any $\ell\in \Gamma\bs \Iso(V)$, the integral
\begin{align}
\label{eq:ti4}
I_\ell^{reg}(\tau,s,f) = \lim_{T\to \infty}
\int_{\calF_T^{\alpha_\ell}} f(\sigma_\ell z) \Theta_L(\tau,\sigma_\ell z,\varphi_0) y^{-s}d\mu(z)
\end{align}
converges for $\Re(s)>\max(\Re(s'),1-\Re(s'))$ and has a meromorphic
continuation in $s$ with the appropriate poles and residues.  In view
of Proposition \ref{theta-decay}, we split up the integral as follows:
\begin{align*}
I_\ell^{reg}(\tau,s,f) &= \lim_{T\to \infty}
\int_{\calF_T^{\alpha_\ell}} f(\sigma_\ell z)
\left(\Theta_L(\tau,\sigma_\ell z,\varphi_0) -y \frac{1}{\sqrt{N}\beta_{\ell}} \tilde\Theta_{K_{\ell}}(\tau)\right) y^{-s}d\mu(z)\\
&\phantom{=}{}+
\lim_{T\to \infty}
\int_{\calF_T^{\alpha_\ell}} f(\sigma_\ell z) \frac{1}{\sqrt{N}\beta_{\ell}} \tilde\Theta_{K_{\ell}}(\tau) y^{1-s}d\mu(z).
\end{align*}
Because of Proposition \ref{theta-decay}, the function in the integral of the first summand is of square exponential decay as $y\to \infty$. Therefore the first summand converges for all $s\in \C$ and defines a holomorphic function of $s$.

For the second summand we split up the integral as
\begin{align*}
\lim_{T\to \infty}
\int_{\calF_T^{\alpha_\ell}} f(\sigma_\ell z) \frac{ \tilde\Theta_{K_{\ell}}(\tau)}{\sqrt{N}\beta_{\ell}} y^{1-s}d\mu(z)
&=\frac{1}{\sqrt{N}\beta_{\ell}} \tilde\Theta_{K_{\ell}}(\tau)\int_{\calF_1^{\alpha_\ell}} f(\sigma_\ell z)  y^{1-s}d\mu(z)\\
&\phantom{=}{}+\frac{1}{\sqrt{N}\beta_{\ell}} \tilde\Theta_{K_{\ell}}(\tau)
\int_{y=1}^\infty\int_{x=0}^{\alpha_\ell} f(\sigma_\ell z) \,dx\,\frac{dy}{y^{1+s}}.
\end{align*}
The first summand on the right hand side is an integral over a compact domain. It converges for all $s$ and defines a holomorphic function in $s$. For the second summand on the right hand side we use the Fourier expansion of $f$ at the cusp $\ell$,
\[
f(z)= \sum_{n\in \Z} a_\ell(n,y)e(nx/\alpha_\ell).
\]
We find that it is given by
\begin{align}
\label{eq:critical}
\frac{\alpha_\ell}{\sqrt{N}\beta_{\ell}} \tilde\Theta_{K_{\ell}}(\tau)
\int_{y=1}^\infty a_\ell(0,y)\,\frac{dy}{y^{1+s}}.
\end{align}
If we write $a_\ell(0,y) = A_\ell y^{s'}+B_\ell y^{1-s'}$, we see that the integral exists for $\Re(s)>\max(\Re(s'),1-\Re(s'))$, and it is equal to
\[
\frac{\alpha_\ell}{\sqrt{N}\beta_{\ell}} \tilde\Theta_{K_{\ell}}(\tau)\left(\frac{A_\ell}{s-s'}+\frac{B_\ell}{s+s'-1}\right).
\]
Using the fact that $\eps_\ell = \alpha_\ell/\beta_\ell$ and putting together the contributions of the different cusps $\ell$, we obtain the assertion.
\end{proof}

In the next proposition we give a formula for $I^{reg}(\tau,f)$ as a limit, not involving the additional parameter $s$.

\begin{proposition}
\label{prop:reg2}
Let $f$ be a weak Maass form of weight zero for $\Gamma$ with eigenvalue $\lambda=s'(1-s')$.
Denote the constant term of the Fourier expansion of $f$ at the cusp $\ell$ by
$A_\ell y^{s'}+B_\ell y^{1-s'}$ with constants $A_\ell,B_\ell \in \C$.
If $s'\neq 0,1$, then $I^{reg}(\tau,f)$ is equal to
\begin{align*}
\sum_{\ell\in \Gamma\bs \Iso(V)}\lim_{T\to \infty}\left[
\int_{\calF_T^{\alpha_\ell}} f(\sigma_\ell z) \Theta_L(\tau,\sigma_\ell z,\varphi_0) d\mu(z)-\left(A_\ell\frac{T^{s'}}{s'}+B_\ell\frac{T^{1-s'}}{1-s'}\right)\frac{\eps_\ell\tilde\Theta_{K_{\ell}}(\tau)}{\sqrt{N}} \right].
\end{align*}
If $s'=0$, then the same formula holds when $\frac{T^{s'}}{s'}$ is replaced by  $\log(T)$.
If $s'=1$, then the same formula holds when $\frac{T^{1-s'}}{1-s'}$ is replaced by  $\log(T)$.
\end{proposition}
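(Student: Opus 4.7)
The plan is to mimic the decomposition used in the proof of Proposition~\ref{prop:reg1} but without inserting the parameter $y^{-s}$, and to show directly that the divergence of the integral as $T\to\infty$ is captured by the explicit terms $A_\ell T^{s'}/s'+B_\ell T^{1-s'}/(1-s')$, while the remaining part of the integral converges to the constant term at $s=0$ of $I^{reg}(\tau,s,f)$. The calculation can be carried out cusp by cusp, so we fix $\ell\in\Gamma\bs\Iso(V)$ and analyze
\[
I_\ell(\tau,T):=\int_{\calF_T^{\alpha_\ell}} f(\sigma_\ell z)\,\Theta_L(\tau,\sigma_\ell z,\varphi_0)\,d\mu(z).
\]

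First, I would split $\Theta_L(\tau,\sigma_\ell z,\varphi_0)$ as $\bigl(\Theta_L(\tau,\sigma_\ell z,\varphi_0)-y\,\tilde\Theta_{K_\ell}(\tau)/(\sqrt N\beta_\ell)\bigr)+y\,\tilde\Theta_{K_\ell}(\tau)/(\sqrt N\beta_\ell)$. By Proposition~\ref{theta-decay}(i) the first summand decays super-exponentially in $y$, so the corresponding piece of $I_\ell(\tau,T)$ converges as $T\to\infty$ to a finite limit, which equals the value at $s=0$ of the first summand in the decomposition used in the proof of Proposition~\ref{prop:reg1}. For the second summand I would use the fact that for $y\geq 1$ the domain $\calF^{\alpha_\ell}$ is just the strip $\{-1/2\leq x\leq\alpha_\ell-1/2\}$ together with the $\alpha_\ell$-periodicity of $f\circ\sigma_\ell$ in $x$ to reduce the $x$-integration to the constant Fourier coefficient $a_\ell(0,y)=A_\ell y^{s'}+B_\ell y^{1-s'}$. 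This gives
\[
\frac{\tilde\Theta_{K_\ell}(\tau)}{\sqrt N\beta_\ell}\biggl[C_\ell+\alpha_\ell\int_1^T a_\ell(0,y)\,\frac{dy}{y}\biggr],
\]
where $C_\ell:=\int_{\calF^{\alpha_\ell}\cap\{y\leq 1\}}f(\sigma_\ell z)\,y\,d\mu(z)$ is a finite constant independent of $T$.

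Next I would evaluate the inner integral explicitly: for $s'\neq 0,1$ it equals $A_\ell(T^{s'}-1)/s'+B_\ell(T^{1-s'}-1)/(1-s')$, while for $s'=0$ the $A_\ell$-term is replaced by $A_\ell\log T$ and for $s'=1$ the $B_\ell$-term is replaced by $B_\ell\log T$. Using $\eps_\ell=\alpha_\ell/\beta_\ell$, subtracting the asserted explicit divergent terms yields exactly
\[
I_\ell(\tau,T)-\bigl(A_\ell T^{s'}/s'+B_\ell T^{1-s'}/(1-s')\bigr)\frac{\eps_\ell\tilde\Theta_{K_\ell}(\tau)}{\sqrt N}\xrightarrow{T\to\infty}I_{\ell,1}^{reg}(\tau,0)+\frac{\eps_\ell\tilde\Theta_{K_\ell}(\tau)}{\sqrt N}\biggl[\frac{C_\ell}{\alpha_\ell}-\frac{A_\ell}{s'}-\frac{B_\ell}{1-s'}\biggr],
\]
where $I_{\ell,1}^{reg}(\tau,0)$ denotes the exponentially-convergent piece.

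Finally I would compare this with the constant term at $s=0$ of $I_\ell^{reg}(\tau,s,f)$, computed in the proof of Proposition~\ref{prop:reg1}, which for $s'\neq 0,1$ equals the same expression since $A_\ell/(s-s')+B_\ell/(s+s'-1)$ is holomorphic at $s=0$ with value $-A_\ell/s'-B_\ell/(1-s')$. This shows the claim in the generic case. For the special cases $s'=0$ or $s'=1$, the residues of $A_\ell/(s-s')$ or $B_\ell/(s+s'-1)$ at $s=0$ cancel precisely against the logarithmic divergence of $\int_1^T a_\ell(0,y)\,dy/y$, and the constant term of the Laurent expansion matches the limit with $\log T$ as the subtraction. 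Summing over the cusps $\ell$ then gives the asserted formula for $I^{reg}(\tau,f)$. The only delicate point is tracking the interplay between the $\log T$ divergence and the simple pole of $A_\ell/(s-s')$ (resp.\ $B_\ell/(s+s'-1)$) at $s=0$ when $s'\in\{0,1\}$; once one notes that both regularizations amount to prescribing that the constant of integration in the renormalized $a_\ell(0,y)$-integral vanish, they produce identical finite parts.
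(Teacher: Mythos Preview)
Your proposal is correct and follows essentially the same approach as the paper's own proof: both work cusp by cusp, split $\Theta_L$ via Proposition~\ref{theta-decay} into the square-exponentially decaying piece plus $y\,\tilde\Theta_{K_\ell}/(\sqrt N\beta_\ell)$, handle the compact region $\calF_1^{\alpha_\ell}$ separately, reduce the strip integral for $y\in[1,T]$ to the constant Fourier coefficient $a_\ell(0,y)$, and then match the explicit antiderivative of $a_\ell(0,y)/y$ against the constant term of $A_\ell/(s-s')+B_\ell/(s+s'-1)$ at $s=0$. The paper packages the last step as the single identity
\[
\CT_{s=0}\left[\int_1^\infty a_\ell(0,y)\,\frac{dy}{y^{1+s}}\right]=\lim_{T\to\infty}\left[\int_1^T a_\ell(0,y)\,\frac{dy}{y}-A_\ell\frac{T^{s'}}{s'}-B_\ell\frac{T^{1-s'}}{1-s'}\right],
\]
with the $\log T$ replacement when $s'\in\{0,1\}$, whereas you spell out the intermediate constants $C_\ell$ and $-A_\ell/s'-B_\ell/(1-s')$ explicitly; but the content is identical.
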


\begin{proof}
For a cusp $\ell$, we let $I^{reg}_\ell(\tau,f)=\CT_{s=0}[I_\ell^{reg}(\tau,s,f)]$.
Then we have
\[
I^{reg}(\tau,f)=\sum_{\ell\in \Gamma\bs \Iso(V)} I^{reg}_\ell(\tau,f),
\]
and it suffices to show that $I^{reg}_\ell(\tau,f)$ is given by
\[
\lim_{T\to \infty}\left[
\int_{\calF_T^{\alpha_\ell}} f(\sigma_\ell z) \Theta_L(\tau,\sigma_\ell z,\varphi_0) d\mu(z)-\left(A_\ell\frac{T^{s'}}{s'}+B_\ell\frac{T^{1-s'}}{1-s'}\right)\frac{\eps_\ell\tilde\Theta_{K_{\ell}}(\tau)}{\sqrt{N}} \right].
\]
The proof of Proposition \ref{prop:reg1} shows that
$I^{reg}_\ell(\tau,f)$ is equal to
\begin{align*}
&\lim_{T\to \infty}
\int_{\calF_T^{\alpha_\ell}} f(\sigma_\ell z)
\left(\Theta_L(\tau,\sigma_\ell z,\varphi_0) -y \frac{1}{\sqrt{N}\beta_{\ell}} \tilde\Theta_{K_{\ell}}(\tau)\right) d\mu(z)\\
&{}+\frac{1}{\sqrt{N}\beta_{\ell}} \tilde\Theta_{K_{\ell}}(\tau)\int_{\calF_1^{\alpha_\ell}} f(\sigma_\ell z)  y \,d\mu(z)\\
&{}+\frac{\eps_\ell}{\sqrt{N}} \tilde\Theta_{K_{\ell}}(\tau)
\CT_{s=0}\left[\int_{y=1}^\infty (A_\ell y^{s'}+B_\ell y^{1-s'})\,\frac{dy}{y^{1+s}}\right].
\end{align*}
Now  the simple observation
$$\int_{\calF_T^{\alpha_\ell}} f(\sigma_\ell z)y\,d\mu(z)=\int_{\calF_1^{\alpha_\ell}}
f(\sigma_\ell z)y\,d\mu(z)+\  \int_{y=1}^T \int_{x=0}^{\alpha_{\ell} }f(\sigma_\ell z) y\,d\mu(z)$$
together with the identity
\begin{align*}
&\CT_{s=0}\left[\int_{y=1}^\infty (A_\ell y^{s'}+ B_\ell y^{1-s'}) \,\frac{dy}{y^{1+s}}\right] \\
&=\lim_{T\to\infty}\left[ A_\ell\left(\int_1^T y^{s'}\, \frac{dy}{y} -\frac{T^{s'}}{s'}\right)+B_\ell\left(\int_1^T y^{1-s'}\, \frac{dy}{y} -\frac{T^{1-s'}}{1-s'}\right)\right]\\
&=\lim_{T\to\infty}\left[  \int_{y=1}^T \int_{x=0}^{\alpha_{\ell} }f(\sigma_\ell z) y\,d\mu(z)-\left(A_{\ell}\frac{T^{s'}}{s'} +B_\ell \frac{T^{1-s'}}{1-s'}\right)\right]
\end{align*}
gives the result when $s'\neq 0$.
The  $s'=0$ case follows similary using the identity
\[
\CT_{s=0}\left[\int_{y=1}^\infty   \,\frac{dy}{y^{1+s}}\right] =\lim_{T\to\infty} \left[ \int_1^T \frac{dy}{y} -\log(T)\right].
\]
\end{proof}

\subsubsection{The Laplacian}
We now consider the action of the hyperbolic Laplacian on the regularized theta lift. The main result of this section is
\begin{theorem}
\label{thm:lapl}
Let $f$ be a weak Maass form for $\Gamma$ with eigenvalue $\lambda=s'(1-s')$.
Denote the constant term of the Fourier expansion of $f$ at the cusp $\ell$ by
$A_\ell y^{s'}+B_\ell y^{1-s'}$ with constants $A_\ell,B_\ell \in \C$.
We have
\begin{align*}
4\Delta_\tau I^{reg}(\tau,f)=\begin{cases}\lambda I^{reg}(\tau,f),&\text{if $s'\neq 0,1$,}\\[1ex]
-\sum_{\ell\in \Gamma\bs \Iso(V)}\frac{A_\ell\eps_\ell }{\sqrt{N}}\tilde \Theta_{K_\ell}(\tau),&\text{if $s'= 0$,}\\[1ex]
-\sum_{\ell\in \Gamma\bs \Iso(V)}\frac{B_\ell\eps_\ell }{\sqrt{N}}\tilde \Theta_{K_\ell}(\tau),&\text{if $s'= 1$.}
\end{cases}
\end{align*}
\end{theorem}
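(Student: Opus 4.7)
The plan is to move $\Delta_\tau$ under the regularized integral, use Lemma~\ref{Laplace} to replace it by $\tfrac14\Delta_z$, then shift $\Delta_z$ onto $f$ by Green's identity and exploit $\Delta_z f = \lambda f$. The key technical point is tracking how the $y^{-s}$ factor and the leading $y$-growth of $\Theta_L$ interact through the boundary and through the failure of $y^{-s}\Delta_z$ to be symmetric.

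Concretely, I would first justify interchanging $\Delta_\tau$ with the $\CT_{s=0}$ and with $\lim_{T\to\infty}\int_{\calF_T^{\alpha_\ell}}$ in Definition~\ref{def:iint}; this is legitimate in $\Re(s) > \max(\Re(s'),1-\Re(s'))$ by the absolute convergence from Proposition~\ref{prop:reg1}. Then by Lemma~\ref{Laplace} one has, on each truncated cuspidal piece,
\begin{equation*}
4\Delta_\tau\int_{\calF_T^{\alpha_\ell}} f(\sigma_\ell z)\Theta_L(\tau,\sigma_\ell z,\varphi_0)\,y^{-s}d\mu=\int_{\calF_T^{\alpha_\ell}}f(\sigma_\ell z)\,\Delta_z\Theta_L(\tau,\sigma_\ell z,\varphi_0)\,y^{-s}d\mu.
\end{equation*}
Writing $\Delta_z=-y^2(\partial_x^2+\partial_y^2)$ and using $f\Delta_{\mathrm{flat}}\Theta_L-\Theta_L\Delta_{\mathrm{flat}} f=\operatorname{div}(f\nabla\Theta_L-\Theta_L\nabla f)$ with $V_y:=f\partial_y\Theta_L-\Theta_L\partial_y f$, integration by parts together with $\partial_y(y^{-s})=-sy^{-s-1}$ yields
\begin{align*}
\int_{\calF_T^{\alpha_\ell}}\!\!\bigl(f\Delta_z\Theta_L-(\Delta_zf)\Theta_L\bigr)y^{-s}d\mu
=-T^{-s}\!\int_{0}^{\alpha_\ell}\!\!V_y\big|_{y=T}dx-s\!\int_{\calF_T^{\alpha_\ell}}y^{-s-1}V_y\,dx\,dy,
\end{align*}
once the lateral boundary terms and the internal pieces have cancelled after summation over $\ell$ by $\alpha_\ell$-periodicity in $x$ and compatibility of the cuspidal wedges in \eqref{eq:ti1.5}.

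Next I would plug in the asymptotics. By Proposition~\ref{theta-decay}(i) one replaces $\Theta_L(\tau,\sigma_\ell z,\varphi_0)$ by $C_\ell y:=\tfrac{y}{\sqrt N\beta_\ell}\tilde\Theta_{K_\ell}(\tau)$ up to square-exponentially decaying terms. Only the constant Fourier term $A_\ell y^{s'}+B_\ell y^{1-s'}$ of $f$ survives the $x$-integration against this $x$-independent leading piece, giving
\begin{equation*}
\int_0^{\alpha_\ell}V_y\,dx= \alpha_\ell C_\ell\bigl[(1-s')A_\ell y^{s'}+s'B_\ell y^{1-s'}\bigr]+O(e^{-Cy^2}).
\end{equation*}
For $\Re(s)>\max(\Re(s'),1-\Re(s'))$ the boundary term $T^{-s}\cdot(\cdots)$ vanishes as $T\to\infty$, and the remaining interior term evaluates, after an elementary $\int_{y_0}^\infty y^{s'-s-1}dy$ type computation and absorbing the compact contribution (which is holomorphic in $s$, hence contributes $0$ after multiplication by $s$ at $s=0$), to
\begin{equation*}
4\Delta_\tau I^{reg}(\tau,s,f)=\lambda I^{reg}(\tau,s,f)-s\sum_{\ell}\frac{\eps_\ell\tilde\Theta_{K_\ell}(\tau)}{\sqrt N}\left[\frac{(1-s')A_\ell\,y_0^{s'-s}}{s-s'}+\frac{s'B_\ell\,y_0^{1-s'-s}}{s+s'-1}\right]+h(\tau,s),
\end{equation*}
with $h(\tau,s)$ holomorphic at $s=0$ and vanishing there because of the $s$-prefactor.

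Finally I would take $\CT_{s=0}$. If $s'\notin\{0,1\}$ the correction term has a simple zero at $s=0$ (the $s$-prefactor kills the simple poles of $I^{reg}$ sitting at $s=s',1-s'$, which are away from $s=0$), so one reads off $4\Delta_\tau I^{reg}(\tau,f)=\lambda I^{reg}(\tau,f)$. If $s'=0$ then $\lambda=0$ and only the $A_\ell$ term survives; the factor $\tfrac{s\cdot A_\ell}{s-s'}=\tfrac{s A_\ell}{s}$ contributes $-A_\ell$ to the constant term, giving $-\sum_\ell \tfrac{A_\ell\eps_\ell}{\sqrt N}\tilde\Theta_{K_\ell}(\tau)$; symmetrically for $s'=1$ with $B_\ell$. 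The main obstacle I anticipate is the combinatorial/sign bookkeeping of the boundary cancellation in step two, in particular verifying that the internal boundaries of the $\sigma_\ell\calF^{\alpha_\ell}$ really do cancel pairwise so that only the $y=T$ pieces (plus the regularization correction) remain; the Fourier analysis itself is mechanical once one invokes Proposition~\ref{theta-decay}.
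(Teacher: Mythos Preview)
Your approach is sound and reaches the correct conclusion, but it differs from the paper's route, and the point you flag as the ``main obstacle'' is exactly where your argument needs an adjustment.

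The paper first reformulates $I^{reg}(\tau,f)$ as a limit \emph{without} the auxiliary variable $s$ (Proposition~\ref{prop:reg2}), then applies $\Delta_\tau$ and Lemma~\ref{Laplace} to obtain directly the absolutely convergent integral $\int_M f(z)\,\Theta_L(\tau,z,\Delta_z\varphi_0)\,d\mu(z)$ (recall $\Theta_L(\tau,z,\Delta_z\varphi_0)$ has square-exponential decay by Proposition~\ref{theta-decay}(iii)). Stokes' theorem is then carried out with trivial weight (Propositions~\ref{prop:reg2.5} and~\ref{prop:reg3}), so the internal boundaries between the cuspidal wedges cancel exactly by the $\Gamma$-invariance of $f$ and $\Theta_L$, leaving only the $y=T$ contributions. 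Comparing with Proposition~\ref{prop:reg2} once more identifies the result as $I^{reg}(\tau,\Delta_z f)$ plus the stated theta correction.

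Your route keeps the weight $y^{-s}$ throughout the Green's identity step, which produces the commutator term $-s\int y^{-s-1}V_y$. This is legitimate, but the internal boundaries between the wedges $\sigma_\ell\calF^{\alpha_\ell}$ do \emph{not} cancel for $s\neq 0$: in the $\ell$-th integral the factor $y^{-s}$ means $(\Im\,\sigma_\ell^{-1}w)^{-s}$, which depends on $\ell$ and fails to match across a shared boundary in $M$. What is true is that these leftover compact-boundary contributions sum to a function holomorphic in $s$ that vanishes at $s=0$ (since at $s=0$ all the weights equal $1$ and the $\Gamma$-invariant integrands then cancel in pairs), so they may legitimately be absorbed into your $h(\tau,s)$. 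With that correction your $\CT_{s=0}$ computation goes through as written. The paper's path avoids this bookkeeping entirely by removing $s$ before integrating by parts; as a bonus it isolates Propositions~\ref{prop:reg2.5} and~\ref{prop:reg3}, which are reused later to set up the second regularization via differential operators.
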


To prove this theorem we need two propositions which will be used also in the sequel.
We start by noting that  in the view of Propsition \ref{theta-decay}, for $f\in A_0(\Gamma)$ which has  at most linear exponential growth at the cusps of $\Gamma$,     the integral
$$\int_M f(z) \Theta_L(\tau,z,\Delta_z\varphi_0)\,d\mu(z)$$ converges, and we have:

\begin{proposition}
\label{prop:reg2.5}
Let $f\in A_0(\Gamma)$ and assume that $f$ has at most linear exponential growth at the cusps of $\Gamma$.
Denote the constant term of the Fourier expansion of $f$ at the cusp $\ell$ by
$a_\ell(0,y)$.
Then we have
\begin{align*}
&\int_M f(z) \Theta_L(\tau,z,\Delta_z\varphi_0)\,d\mu(z)\\
&=
\sum_{\ell\in \Gamma\bs \Iso(V)}
\lim_{T\to \infty}\Bigg[
\int_{\calF_T^{\alpha_\ell}} (\Delta_z f)(\sigma_\ell z) \Theta_L(\tau,\sigma_\ell z,\varphi_0) d\mu(z)\\
&\phantom{=
\sum_{\ell\in \Gamma\bs \Iso(V)}
\lim_{T\to \infty}\Big[}
{}
-\frac{\eps_\ell\tilde\Theta_{K_{\ell}}(\tau)}{\sqrt{N}} \left[(
1-y\frac{\partial}{\partial y}) a_\ell(0,y)\right]_{y=T}\Bigg].
\end{align*}
In particular, the limit on the right hand side exists.
\end{proposition}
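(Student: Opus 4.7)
The overall strategy is to move the Laplacian from the theta kernel onto $f$ by applying Green's identity on the truncated fundamental domain \eqref{eq:ti1.5}, and then to read off the boundary contribution at $y=T$ from the asymptotic expansion of $\Theta_L$ given in Proposition~\ref{theta-decay}. First I observe that since $\Delta_z$ commutes with the sum defining $\Theta_L$, we have $\Theta_L(\tau,z,\Delta_z\varphi_0)=\Delta_z\Theta_L(\tau,z,\varphi_0)$, and by Proposition~\ref{theta-decay}(iii) this kernel is of square-exponential decay at every cusp, so the integral on the left hand side converges absolutely even when $f$ has linear exponential growth.

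Using the decomposition \eqref{eq:ti1.5} and truncating at height $T$, write
\[
\int_M f(z)\,\Theta_L(\tau,z,\Delta_z\varphi_0)\,d\mu(z)
=\sum_{\ell} \lim_{T\to\infty}\int_{\calF^{\alpha_\ell}_T} F_\ell(z)\,\Delta_z G_\ell(z)\,d\mu(z),
\]
where $F_\ell(z):=f(\sigma_\ell z)$ and $G_\ell(z):=\Theta_L(\tau,\sigma_\ell z,\varphi_0)$. On each rectangle $\calF^{\alpha_\ell}_T$, Green's identity (writing $\Delta_z=-y^2(\partial_x^2+\partial_y^2)$ and $d\mu=y^{-2}dx\,dy$) gives
\[
\int_{\calF^{\alpha_\ell}_T}\!\bigl(F_\ell\Delta_z G_\ell-G_\ell\Delta_z F_\ell\bigr)d\mu
=-\int_0^{\alpha_\ell}\!\bigl(F_\ell\partial_y G_\ell-G_\ell\partial_y F_\ell\bigr)\Big|_{y=T}dx,
\]
since the vertical side contributions cancel by the $\alpha_\ell$-periodicity in $x$ of both $F_\ell$ and $G_\ell$.

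Next I evaluate the top-edge integral. By Proposition~\ref{theta-decay}(i), as $y\to\infty$ we have
\[
G_\ell(z)=\tfrac{y}{\sqrt{N}\beta_\ell}\tilde\Theta_{K_\ell}(\tau)+O(e^{-Cy^2}),\qquad \partial_y G_\ell(z)=\tfrac{1}{\sqrt{N}\beta_\ell}\tilde\Theta_{K_\ell}(\tau)+O(ye^{-Cy^2}).
\]
Inserting these into the boundary integral, only the zero Fourier mode of $F_\ell$ survives against the constant (in $x$) main term of $G_\ell$, while the exponentially small remainder is annihilated at $y=T$ even against linear-exponentially growing $F_\ell$. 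Using $\int_0^{\alpha_\ell}F_\ell\,dx=\alpha_\ell\,a_\ell(0,y)$ and $\eps_\ell=\alpha_\ell/\beta_\ell$, the top-edge integral equals
\[
\frac{\eps_\ell\,\tilde\Theta_{K_\ell}(\tau)}{\sqrt{N}}\Bigl[a_\ell(0,T)-T\,\partial_y a_\ell(0,T)\Bigr]+o(1)
=\frac{\eps_\ell\,\tilde\Theta_{K_\ell}(\tau)}{\sqrt{N}}\Bigl[\bigl(1-y\tfrac{\partial}{\partial y}\bigr)a_\ell(0,y)\Bigr]_{y=T}+o(1).
\]
Substituting back and using the minus sign from Green's identity yields, for each cusp $\ell$,
\[
\int_{\calF^{\alpha_\ell}_T} F_\ell\,\Delta_z G_\ell\,d\mu = \int_{\calF^{\alpha_\ell}_T} (\Delta_zf)(\sigma_\ell z)\,G_\ell\,d\mu-\frac{\eps_\ell\tilde\Theta_{K_\ell}(\tau)}{\sqrt{N}}\Bigl[\bigl(1-y\tfrac{\partial}{\partial y}\bigr)a_\ell(0,y)\Bigr]_{y=T}+o(1).
\]
Since the left hand side has a finite limit as $T\to\infty$ (by Prop.~\ref{theta-decay}(iii) applied to $\Delta_z\varphi_0$), the bracketed expression on the right must also converge, which gives the claimed formula and the existence of the limit simultaneously. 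The main technical point is justifying that the oscillating Fourier modes of $F_\ell$ integrated against the exponentially small error term in $G_\ell$ really contribute $o(1)$; this uses the Paley–Wiener type uniformity in Proposition~\ref{theta-decay} together with the at-most-linear-exponential growth hypothesis on $f$.
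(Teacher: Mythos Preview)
Your approach is essentially the paper's: move $\Delta_z$ across by Stokes/Green on the truncated fundamental domain and read off the boundary term from the asymptotic of $\Theta_L$ at the cusps. The computation of the top-edge contribution is correct and matches the paper.

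There is, however, a real gap in the treatment of the boundary. The set $\calF^{\alpha_\ell}_T$ is \emph{not} a rectangle: it is the truncation of $\bigcup_{j=0}^{\alpha_\ell-1}\kzxz{1}{j}{0}{1}\calF$, where $\calF$ is the standard $\SL_2(\Z)$ fundamental domain, so its lower boundary consists of arcs of circles $|z-j|=1$. Your argument addresses only the vertical sides (which do cancel by $\alpha_\ell$-periodicity) and the top edge, but says nothing about these arcs. For a single cusp $\ell$ they do \emph{not} cancel individually. What makes them disappear is that, after summing over all $\ell$, the images $\sigma_\ell\calF^{\alpha_\ell}$ tile a fundamental domain for $\Gamma$, so the arc edges are identified in pairs by elements of $\Gamma$; since both $f$ and $\Theta_L$ are $\Gamma$-invariant, the $1$-form $(\bar\partial f)\Theta_L + f(\partial\Theta_L)$ is $\Gamma$-invariant and these paired contributions cancel. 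The paper handles this by first summing over $\ell$ and only then asserting that the surviving boundary is the union of top edges (plus an $o(1)$ from replacing $\Theta_L$ by its main term). You need to insert exactly this observation; as written, your Green's identity step is applied to a domain whose boundary you have not fully accounted for.
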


\begin{proof}
According to \eqref{eq:ti1.5}, we have
\begin{align}
\label{eq:u11}
\int_M f(z) \Theta_L(\tau,z,\Delta_z\varphi_0)\,d\mu(z)=
\sum_{\ell\in \Gamma\bs \Iso(V)}
\lim_{T\to \infty}
\int_{\calF_T^{\alpha_\ell}} f(\sigma_\ell z) \Theta_L(\tau,\sigma_\ell z,\Delta_z\varphi_0) d\mu(z).
\end{align}
We use Stokes' theorem to rewrite the integral. For smooth functions $f$ and $g$ on $D$ we have
\begin{equation}\label{diff-ops}
\frac{2}{i}(\partial \bar\partial f) g = (\Delta f) g\,d\mu(z)
=  f(\Delta g)\,d\mu(z) +\frac{2}{i} d\big( (\bar \partial f) g+ f (\partial g)\big).
\end{equation}
Consequently,
\begin{align*}
& \int_{\calF_T^{\alpha_\ell}} f(\sigma_\ell z) \Theta_L(\tau,\sigma_\ell z,\Delta_z\varphi_0) d\mu(z)\\
&=\int_{\calF_T^{\alpha_\ell}} (\Delta_z f)(\sigma_\ell z) \Theta_L(\tau,\sigma_\ell z,\varphi_0) d\mu(z)\\
&\phantom{=} {}-\frac{2}{i}
\int_{\partial \calF_T^{\alpha_\ell}} \left[\big(\bar \partial f(\sigma_\ell z)\big) \Theta_L(\tau,\sigma_\ell z,\varphi_0) + f(\sigma_\ell z)\big(\partial \Theta_L(\tau,\sigma_\ell z,\varphi_0)\big) \right].
\end{align*}
Using Propsition \ref{theta-decay} and inserting the Fourier expansions of $f$ at the cusps, we find for $T\to \infty$  that
\begin{align}
\label{eq:u22}
&\sum_{\ell\in \Gamma\bs \Iso(V)}
\int_{\calF_T^{\alpha_\ell}} f(\sigma_\ell z) \Theta_L(\tau,\sigma_\ell z,\Delta_z\varphi_0) d\mu(z)\\
\nonumber
&= \sum_{\ell\in \Gamma\bs \Iso(V)}
\int_{\calF_T^{\alpha_\ell}} (\Delta_z f)(\sigma_\ell z) \Theta_L(\tau,\sigma_\ell z,\varphi_0) d\mu(z)\\
\nonumber
&\phantom{=} {}+\frac{2}{i}
\sum_{\ell\in \Gamma\bs \Iso(V)}\frac{\tilde \Theta_{K_\ell}(\tau)}{\beta_\ell\sqrt{N}}
\int\limits_{z=0+iT}^{\alpha_\ell+iT}
 \left(\frac{\partial}{\partial \bar z} a_\ell(0,y)\right) y+ a_\ell(0,y)\left(\frac{\partial}{\partial z} y\right)\, dx + O(\frac{1}{T}).
\end{align}
The second term on the right hand side is equal to
\[
-\sum_{\ell}\frac{\eps_\ell \tilde \Theta_{K_\ell}(\tau)}{\sqrt{N}}
\left[(1-y\frac{\partial}{\partial y} )a_\ell(0,y)\right]_{y=T}
\]
Inserting this into \eqref{eq:u22} and then into \eqref{eq:u11}, we obtain the assertion.
\end{proof}

\begin{proposition}
\label{prop:reg3}
Let $f$ be a weak Maass form of weight zero for $\Gamma$ with eigenvalue $\lambda=s'(1-s')$.
Denote the constant term of the Fourier expansion of $f$ at the cusp $\ell$ by
$a_\ell(0,y)=A_\ell y^{s'}+B_\ell y^{1-s'}$ with constants $A_\ell,B_\ell \in \C$.
If $s'\neq 0,1$, then
\begin{align*}
I^{reg}(\tau,\Delta_z f)=\int_M f(z) \Theta_L(\tau,z,\Delta_z\varphi_0)\,d\mu(z).
\end{align*}
If $s'=0$, then we have
\begin{align*}
0=I^{reg}(\tau,\Delta_z f)=\int_M f(z) \Theta_L(\tau,z,\Delta_z\varphi_0)\, d\mu(z)+\sum_{\ell\in \Gamma\bs\Iso(V)} \frac{A_\ell\eps_\ell}{\sqrt{N}}\tilde \Theta_{K_\ell}(\tau).
\end{align*}
If $s'=1$, then we have
\begin{align*}
0=I^{reg}(\tau,\Delta_z f)=\int_M f(z) \Theta_L(\tau,z,\Delta_z\varphi_0)\,d\mu(z)+\sum_{\ell\in \Gamma\bs\Iso(V)} \frac{B_\ell\eps_\ell}{\sqrt{N}}\tilde \Theta_{K_\ell}(\tau).
\end{align*}
\end{proposition}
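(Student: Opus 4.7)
The plan is to derive all three formulas by a direct comparison of the two explicit expressions already at our disposal: Proposition \ref{prop:reg2}, which evaluates the regularized lift $I^{reg}(\tau,g)$ of a weak Maass form $g$ as a limit of truncated integrals minus boundary terms built from the constant Fourier coefficient of $g$ at each cusp; and Proposition \ref{prop:reg2.5}, which evaluates the convergent integral $\int_M f\,\Theta_L(\tau,\cdot,\Delta_z\varphi_0)\,d\mu$ as exactly the same kind of limit, with boundary corrections built from $(1-y\partial_y)a_\ell(0,y)$. Since $\Delta_z f = \lambda f$, the function $\Delta_z f$ is itself a weak Maass form of eigenvalue $\lambda$ whose constant term at the cusp $\ell$ is $\lambda A_\ell y^{s'} + \lambda B_\ell y^{1-s'}$, so Proposition \ref{prop:reg2} is applicable to it.

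First I would treat the generic case $s'\neq 0,1$. The truncated integrals $\int_{\calF^{\alpha_\ell}_T}(\Delta_z f)(\sigma_\ell z)\Theta_L(\tau,\sigma_\ell z,\varphi_0)\,d\mu(z)$ appearing in both propositions are literally identical, so the task reduces to matching the boundary correction terms. Applied to $\Delta_z f$, Proposition \ref{prop:reg2} produces the correction $\bigl(\lambda A_\ell T^{s'}/s' + \lambda B_\ell T^{1-s'}/(1-s')\bigr)\eps_\ell\tilde\Theta_{K_\ell}(\tau)/\sqrt{N}$ at the cusp $\ell$, which by the single algebraic identity $\lambda=s'(1-s')$ simplifies to $\bigl((1-s')A_\ell T^{s'} + s' B_\ell T^{1-s'}\bigr)\eps_\ell\tilde\Theta_{K_\ell}(\tau)/\sqrt{N}$. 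Applied to $f$ itself, Proposition \ref{prop:reg2.5} yields the correction governed by $(1-y\partial_y)(A_\ell y^{s'} + B_\ell y^{1-s'})|_{y=T}$, which equals $(1-s')A_\ell T^{s'} + s' B_\ell T^{1-s'}$. The two formulas therefore coincide.

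For $s'=0$ and $s'=1$ the eigenvalue is $\lambda=0$, so $\Delta_z f\equiv 0$ and $I^{reg}(\tau,\Delta_z f)=0$ trivially. The substantive content is the second equality, which follows directly from Proposition \ref{prop:reg2.5}: the truncated integrals of $(\Delta_z f)\Theta_L$ vanish and only the boundary term survives. A quick computation shows $(1-y\partial_y)(A_\ell + B_\ell y) = A_\ell$ when $s'=0$, and $(1-y\partial_y)(A_\ell y + B_\ell) = B_\ell$ when $s'=1$, producing exactly the claimed corrective unary theta contribution after multiplication by $-\eps_\ell\tilde\Theta_{K_\ell}(\tau)/\sqrt{N}$ and summation over cusps.

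I do not anticipate any serious obstacle; the whole argument reduces to the observation that the two regularizations, read through Propositions \ref{prop:reg2} and \ref{prop:reg2.5}, produce boundary terms governed by the same operator $1-y\partial_y$ acting on the same constant Fourier coefficient, once one exploits $\lambda/s' = 1-s'$ and $\lambda/(1-s')=s'$. The only care required is in bookkeeping the critical case: the $s'=0$ and $s'=1$ formulas arise by formally replacing $T^{s'}/s'$ and $T^{1-s'}/(1-s')$ by $\log T$ in the generic formula, in accordance with the variant of Proposition \ref{prop:reg2} given there, and then observing that the $\log T$ contributions exactly cancel because they no longer have a counterpart in the contribution from $\Delta_z f \equiv 0$.
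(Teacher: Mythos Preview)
Your proposal is correct and follows essentially the same approach as the paper: apply Proposition~\ref{prop:reg2.5} to compute the boundary term $(1-y\partial_y)a_\ell(0,y)|_{y=T}=(1-s')A_\ell T^{s'}+s'B_\ell T^{1-s'}$, and compare with the expression for $I^{reg}(\tau,\Delta_z f)$ furnished by Proposition~\ref{prop:reg2}. Your version spells out the algebra $\lambda/s'=1-s'$, $\lambda/(1-s')=s'$ that the paper leaves implicit, and your treatment of the degenerate cases $s'=0,1$ is likewise the intended one.
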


\begin{proof}
We use Proposition \ref{prop:reg2.5} for
\[
\int_M f(z) \Theta_L(\tau,z,\Delta_z\varphi_0)\,d\mu(z)
\]
together with the fact that
\[
%
\left[(1-y\frac{\partial}{\partial y} )a_\ell(0,y)\right]_{y=T}
=(1-s')A_\ell T^{s'}+ s' B_\ell T^{1-s'}.
\]
Comparing this with the formula for $I^{reg}(\tau,\Delta_z f)$ of Proposition \ref{prop:reg2}, we obtain the assertion.
\end{proof}


\begin{proof}[Proof of Theorem \ref{thm:lapl}]
According to Proposition \ref{prop:reg2} and Lemma \ref{Laplace} we have
\begin{align*}
4\Delta_\tau I^{reg}(\tau,f)&= 4\sum_{\ell\in \Gamma\bs \Iso(V)}\lim_{T\to \infty}
\int_{\calF_T^{\alpha_\ell}} f(\sigma_\ell z) \Delta_\tau\Theta_L(\tau,\sigma_\ell z,\varphi_0) d\mu(z)\\
&= \int_M f(z) \Theta_L(\tau,z,\Delta_z\varphi_0).
\end{align*}
Now the assertion of the theorem follows from Proposition \ref{prop:reg3}.
\end{proof}

\subsection{Regularization using differential operators}

Proposition \ref{prop:reg3} also leads to a different way of defining the regularized integral for weak Maass forms with non-zero eigenvalue. This regularization uses differential operators, in fact, the Laplace operator $\Delta_z$ on $M$.
It is in the spirit of the regularized Siegel-Weil formula of Kudla-Rallis via regularized theta lifts \cite{KR}. For a related treatment, see Matthes \cite{Matthes}.

\begin{proposition}
\label{prop:reg4}
Let $f$ be a weak Maass form for $\Gamma$ with eigenvalue $\lambda\neq 0$.
Then
\begin{align*}
I^{reg}(\tau,f)=\frac{1}{\lambda}
\int_M f(z) \Theta_L(\tau,z,\Delta_z\varphi_0).
\end{align*}
The integral on the right hand side converges absolutely.
\end{proposition}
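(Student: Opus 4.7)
The proof should be essentially an immediate consequence of Proposition \ref{prop:reg3}, combined with the linearity of the regularized theta lift in its input and the eigenvalue equation for $f$. My plan is to exploit the hypothesis $\lambda\neq 0$ to convert $f$ into $\Delta_z f/\lambda$ inside the regularized integral, and then apply Proposition \ref{prop:reg3} to identify this with the (absolutely convergent) integral against the kernel $\Theta_L(\tau,z,\Delta_z\varphi_0)$.

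Concretely, first I would observe that since $f$ is a weak Maass form with eigenvalue $\lambda$, we have $\Delta_z f = \lambda f$, so
\begin{equation*}
I^{reg}(\tau,f) = \frac{1}{\lambda} I^{reg}(\tau,\Delta_z f),
\end{equation*}
using the obvious linearity of the construction $I^{reg}$ in its input (visible from Definition \ref{def:iint}). Next, since $\lambda\neq 0$ corresponds to the case $s'\neq 0,1$ in Proposition \ref{prop:reg3}, that proposition gives directly
\begin{equation*}
I^{reg}(\tau,\Delta_z f) = \int_M f(z)\,\Theta_L(\tau,z,\Delta_z\varphi_0)\,d\mu(z).
\end{equation*}
Combining these two identities yields the claimed formula.

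For the absolute convergence of the integral on the right, I would invoke Proposition \ref{theta-decay}(iii), which states that $\Theta_L(\tau,z,\Delta_z\varphi_0)$ is of square-exponential decay $O(e^{-Cy^2})$ at every cusp of $M$. Since $f$, being a weak Maass form, has at most linear exponential growth at the cusps, the product $f(z)\cdot\Theta_L(\tau,z,\Delta_z\varphi_0)$ is still of square-exponential decay, so the integral over $M$ converges absolutely (this is precisely what was already used to set up the computation in Proposition \ref{prop:reg2.5}).

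There is really no main obstacle here; the proposition is a direct corollary of results already established earlier in the section. The only thing to be careful about is that one is allowed to pull the scalar $\lambda$ out of $I^{reg}$, which is transparent from the defining limit and constant-term procedure in Definition \ref{def:iint}. All of the serious analytic work—the meromorphic continuation in $s$, the identification of the boundary contributions, and Stokes' theorem applied to move $\Delta_z$ from $f$ to $\varphi_0$—has already been carried out in Propositions \ref{prop:reg1}--\ref{prop:reg3}.
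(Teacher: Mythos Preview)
Your proposal is correct and follows essentially the same approach as the paper: use $\Delta_z f=\lambda f$ to write $I^{reg}(\tau,f)=\tfrac{1}{\lambda}I^{reg}(\tau,\Delta_z f)$, apply Proposition~\ref{prop:reg3} in the case $s'\neq 0,1$, and cite Proposition~\ref{theta-decay}(iii) for the convergence of the resulting integral.
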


\begin{proof}
According to Proposition \ref{prop:reg3}, we have
\begin{align*}
I^{reg}(\tau,f)&= \frac{1}{\lambda}I^{reg}(\tau,\Delta_z f)\\
&= \frac{1}{\lambda}
\int_M f(z) \Theta_L(\tau,z,\Delta_z\varphi_0).
\end{align*}
By Proposition~\ref{theta-decay}, the integral converges.
This proves the proposition.
\end{proof}

By Lemma~\ref{BFDuke} we directly see:

\begin{lemma}
If $f$ is a weak Maass form for $\Gamma$ with eigenvalue $\lambda\neq 0$, then
\[
I^{reg}(\tau,f) =   \frac{4\pi}{\la} R_{-3/2} \left( \int_M f(z) \Theta_{L}(\tau,z, \varphi_{1})\, d\mu(z) \right).
\]

\end{lemma}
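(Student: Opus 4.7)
The plan is to combine Proposition~\ref{prop:reg4} with Lemma~\ref{BFDuke}, reducing the statement to an interchange of the raising operator $R_{-3/2}$ (acting in the $\tau$ variable) with the integral over $M$ (in the $z$ variable).

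First I would apply Proposition~\ref{prop:reg4}, which gives
\[
I^{reg}(\tau,f) = \frac{1}{\lambda}\int_M f(z)\, \Theta_L(\tau,z,\Delta_z\varphi_0)\, d\mu(z).
\]
Next, Lemma~\ref{BFDuke} states that $\Delta_z \varphi_0(X,\tau,z) = 4\pi R_{-3/2} \varphi_1(X,\tau,z)$, where $R_{-3/2}$ acts on the $\tau$-variable. Summing this identity over $X\in h+L$ for each coset $h\in L'/L$ and using that $R_{-3/2}$ does not depend on $z$, one obtains
\[
\Theta_L(\tau,z,\Delta_z\varphi_0) = 4\pi\, R_{-3/2}\, \Theta_L(\tau,z,\varphi_1).
\]

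The main point is then to pull the operator $R_{-3/2}$ outside of the $z$-integral, i.e.\ to justify
\[
\int_M f(z)\, R_{-3/2}\Theta_L(\tau,z,\varphi_1)\, d\mu(z) = R_{-3/2}\int_M f(z)\, \Theta_L(\tau,z,\varphi_1)\, d\mu(z).
\]
This is the only place where an analytic subtlety could arise. By Proposition~\ref{theta-decay}(ii), $\Theta_L(\tau,z,\varphi_1)$ is square-exponentially decreasing at every cusp of $M$, so against any weak Maass form $f$ (which has at most linear exponential growth at the cusps) the integral converges absolutely and locally uniformly in $\tau$. The same decay estimate applies to $\tfrac{\partial}{\partial\tau}\Theta_L(\tau,z,\varphi_1)$, since differentiating the Gaussian theta kernel in $\tau$ preserves square-exponential decay in $z$. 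Hence differentiation under the integral sign is valid, and since $R_{-3/2} = 2i\partial_\tau - \tfrac{3}{2}v^{-1}$ involves only the $\tau$-variable, the interchange follows.

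Putting the three steps together yields
\[
I^{reg}(\tau,f) = \frac{1}{\lambda}\cdot 4\pi\, R_{-3/2} \left( \int_M f(z)\,\Theta_L(\tau,z,\varphi_1)\, d\mu(z) \right) = \frac{4\pi}{\lambda}\, R_{-3/2}\left( \int_M f(z)\,\Theta_L(\tau,z,\varphi_1)\, d\mu(z) \right),
\]
as required. I expect no serious obstacle; the only technical issue is the uniform decay estimate needed to justify interchanging $R_{-3/2}$ with the integral, and this is immediate from Proposition~\ref{theta-decay}(ii).
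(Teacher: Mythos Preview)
Your proof is correct and follows exactly the approach the paper takes: it simply notes that the lemma follows directly from Lemma~\ref{BFDuke} applied to Proposition~\ref{prop:reg4}. You have additionally spelled out the justification for interchanging $R_{-3/2}$ with the $z$-integral via Proposition~\ref{theta-decay}(ii), which the paper leaves implicit.
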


\subsubsection{Eigenvalue zero}

We now explain how the regularization by means of differential operators
described in Proposition \ref {prop:reg4} can be extended to the case when the eigenvalue is $0$.
The idea is to use a `spectral deformation' of $f$ into a family of eigenfunctions.

\begin{proposition}
\label{prop:defor}
Let $f\in H_0(\Gamma)$ be a harmonic weak Maass form. There exists an open neighborhood $U\subset \C$ of $1$ and a holomorphic family of functions $(f_s)_{s\in U}$ on $D$ such that
$f_s(z)$ is a weak Maass form of weight $0$ for $\Gamma$ with eigenvalue $s(1-s)$, and  $f_1=f$.
\end{proposition}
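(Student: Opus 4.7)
The plan is to construct $f_s$ as an explicit finite linear combination of Maass--Poincar\'e series and regularized Eisenstein series at spectral parameter $s$, matching the Fourier principal parts of $f$ at each cusp of $\Gamma$.

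For each cusp $\ell\in \Gamma\bs\Iso(V)$ and each non-zero integer $m$, I would use a Maass--Poincar\'e series of the form
\[
P_{\ell,m}(z,s) = \frac12 \sum_{\gamma\in\Gamma_\ell\bs\Gamma} \calM_s\big(4\pi|m|\Im(\sigma_\ell^{-1}\gamma z)/\alpha_\ell\big)\, e\big(m\,\sigma_\ell^{-1}\gamma z/\alpha_\ell\big),
\]
built from a suitably normalized $M$-Whittaker function $\calM_s$. It converges absolutely for $\Re(s)$ large, satisfies $\Delta_0 P_{\ell,m}(z,s) = s(1-s)P_{\ell,m}(z,s)$, and admits a meromorphic continuation in $s$. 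With the harmonic normalization one verifies (as in \cite{Br2}) that $P_{\ell,m}(z,s)$ is holomorphic in a neighborhood of $s=1$, and that $P_{\ell,m}(z,1)$ is a harmonic weak Maass form whose Fourier expansion at the cusp $\ell$ has a prescribed principal term (the mode $e(mz/\alpha_\ell)$ for $m<0$, or the complementary anti-holomorphic mode $e(m\bar z/\alpha_\ell)$ for $m>0$), while at every other cusp its Fourier expansion consists only of decaying and constant contributions. For the $y$-linear and constant contributions, I would regularize the real-analytic Eisenstein series $E_\ell(z,s)$ (which satisfies $\Delta_0 E_\ell = s(1-s) E_\ell$ and has a simple pole at $s=1$ with constant residue) by subtracting this residue, obtaining a family $\widetilde E_\ell(z,s)$ holomorphic at $s=1$.

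Since $f \in H_0(\Gamma)$, at every cusp $\ell$ the Fourier expansion of $f$ has only finitely many non-decaying terms, namely a combination of $1$, $y$, and finitely many exponential modes $e(mz/\alpha_\ell)$ and $e(m\bar z/\alpha_\ell)$ with $m\neq 0$. I would choose coefficients $c^{\pm}_{\ell,m}$ and $a_\ell$ so that the difference
\[
\phi(z) := f(z) - \sum_{\ell,m} c^{\pm}_{\ell,m}\, P_{\ell,m}(z,1) - \sum_\ell a_\ell\, \widetilde E_\ell(z,1)
\]
has purely decaying Fourier expansion at every cusp. Then $\phi$ is a harmonic weak Maass form in $L^2(\Gamma\bs\h)$, and since $\Delta_0$ is non-negative on $L^2$ with kernel equal to the constants, $\phi$ must be constant. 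Setting
\[
f_s(z) := \sum_{\ell,m} c^{\pm}_{\ell,m}\, P_{\ell,m}(z,s) + \sum_\ell a_\ell\, \widetilde E_\ell(z,s) + \phi
\]
yields a holomorphic family of weak Maass forms of eigenvalue $s(1-s)$ in a neighborhood of $s=1$, with $f_1=f$.

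The main obstacle is to verify holomorphy of the individual building blocks at $s=1$ and the consistency of the principal-part matching across the finitely many cusps. Holomorphy of the $M$-Whittaker Poincar\'e series at $s=1$ for $m\neq 0$ is a classical but delicate fact from the theory of harmonic weak Maass forms, essentially relying on the cancellation of the pole of the $M$-Whittaker function against the residue of the resolvent. The Eisenstein matching reduces to a finite linear-algebra problem using the non-degeneracy of the Eisenstein constant-term matrix.
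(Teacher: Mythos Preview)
Your overall strategy---building $f_s$ from Niebur--Poincar\'e and Eisenstein series at spectral parameter $s$ and matching principal parts at the cusps---is exactly the approach the paper indicates (deferring to \cite{Fay}, \cite{Hej}, \cite{Br2}) and carries out explicitly for $\SL_2(\Z)$ in Section~\ref{sect:6}. But your handling of the Eisenstein and constant contributions has a genuine gap: neither $\widetilde E_\ell(z,s)=E_\ell(z,s)-\frac{r}{s-1}$ nor the leftover constant $\phi$ is an eigenfunction of $\Delta_0$ with eigenvalue $s(1-s)$ for $s\neq 1$. Indeed $\Delta_0\widetilde E_\ell = s(1-s)E_\ell = s(1-s)\widetilde E_\ell - sr$, an \emph{inhomogeneous} equation, and $\Delta_0\phi=0\neq s(1-s)\phi$ unless $\phi=0$. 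So your $f_s$ as written fails to lie in the $s(1-s)$-eigenspace away from $s=1$, which is precisely what the proposition requires.

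The repair is to avoid additive $z$-constants and instead use $s$-dependent \emph{scalar} multiples of the unregularized $E_\ell(z,s)$, with coefficients chosen so that the full combination is holomorphic at $s=1$. This is what the paper does in Section~\ref{sect:6}: the constant function $1$ is deformed by $\frac{\zeta^*(2)}{\zeta^*(2s-1)}G_0(z,s)$, and $j_m$ by $G_{-m}(z,s)-c_m(s)G_0(z,s)$ with $c_m(s)$ chosen so that the $y^{1-s}$-terms cancel; both are genuine eigenfunctions for every $s$ near $1$. For general $\Gamma$ and $f\in H_0(\Gamma)$ one argues the same way; the pole cancellation in the Eisenstein combination is guaranteed by the residue relation on the $y$-linear constant terms of $f$ coming from the fact that $\xi_0 f$ is weakly holomorphic of weight~$2$.
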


\begin{proof}
This result can be proved using Poincar\'e and Eisenstein series for $\Gamma$.
See for example \cite[Section 3]{Fay}, \cite[pp.~660]{Hej}, or \cite[Proposition 1.12]{Br2}.
\end{proof}

Let $f\in H_0(\Gamma)$ and $(f_s)_{s\in U}$ be as in Proposition \ref{prop:defor}.
Denote the constant term of the Fourier expansion of $f_s$ at the cusp $\ell$ by
$a_\ell(0,y,s)=A_\ell(s) y^{s}+B_\ell(s) y^{1-s}$ with holomorphic functions $A_\ell(s),B_\ell(s)$.
In view of Proposition \ref{prop:reg4}, we have for $s\in U\setminus\{1\}$ that
\[
I^{reg}(\tau,f_s)= \frac{1}{s(1-s)} \int_M f_s(z) \Theta_L(\tau,z,\Delta_z\varphi_0)\,d\mu(z).
\]
The right hand side defines a meromorphic function for all $s\in U$.
In view of Proposition \ref{prop:reg3}, it has a first order pole at $s=1$ with residue
\begin{align}
\label{eq:jres}
\sum_{\ell\in \Gamma\bs\Iso(V)} \frac{B_\ell(1)\eps_\ell}{\sqrt{N}}\tilde \Theta_{K_\ell}(\tau).
\end{align}
We can define a regularized theta integral by putting
\begin{align}
\label{eq:jint}
J^{reg}(\tau,f):= \CT_{s=1}\left[\frac{1}{s(1-s)} \int_M f_s(z) \Theta_L(\tau,z,\Delta_z\varphi_0)\, d\mu(z)\right].
\end{align}
We now compare this with the regularized theta integral of Definition \ref{def:iint}.

\begin{proposition}
\label{prop:specdef}
Let $f\in H_0(\Gamma)$ and $(f_s)_{s\in U}$ be as in Proposition \ref{prop:defor}.
Denote the constant term of the Fourier expansion of $f_s$ at the cusp $\ell$ by
$a_\ell(0,y,s)=A_\ell(s) y^{s}+B_\ell(s) y^{1-s}$ with holomorphic functions $A_\ell(s),B_\ell(s)$. Then we have
\[
J^{reg}(\tau,f)= I^{reg}(\tau,f)
+\sum_{\ell\in \Gamma\bs\Iso(V)} \frac{B'_\ell(1)\eps_\ell}{\sqrt{N}}\tilde \Theta_{K_\ell}(\tau).
\]
\end{proposition}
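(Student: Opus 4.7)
The plan is to exploit Proposition \ref{prop:reg4} to represent $I^{reg}(\tau,f_t)$ for $t\ne 1$ as an absolutely convergent integral divided by $t(1-t)$, and then to extract the constant term in its Laurent expansion at $t=1$. Set
\[
E(t):=\int_M f_t(z)\,\Theta_L(\tau,z,\Delta_z\varphi_0)\,d\mu(z).
\]
By Proposition \ref{theta-decay}(iii), $\Theta_L(\tau,z,\Delta_z\varphi_0)$ is of square exponential decay at every cusp, so the integral converges absolutely; combined with the holomorphicity of the family $(f_s)_{s\in U}$ (with locally uniform control on the Fourier expansion at each cusp), this justifies differentiation under the integral and shows that $E(t)$ is holomorphic on $U$. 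For $t\in U\setminus\{1\}$, Proposition \ref{prop:reg4} gives $I^{reg}(\tau,f_t)=E(t)/(t(1-t))$. A direct Laurent expansion of $1/(t(1-t))$ around $t=1$ then shows
\[
J^{reg}(\tau,f)=\CT_{s=1}\frac{E(s)}{s(1-s)}=E(1)-E'(1).
\]

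To handle $E(1)$, I would apply Proposition \ref{prop:reg3} to $f=f_1$ with $s'=1$. Since $\Delta_z f=0$, that proposition yields $0=I^{reg}(\tau,\Delta_z f)=E(1)+\sum_\ell\frac{B_\ell(1)\eps_\ell}{\sqrt{N}}\tilde\Theta_{K_\ell}(\tau)$, so $E(1)=-\sum_{\ell}\frac{B_\ell(1)\eps_\ell}{\sqrt{N}}\tilde\Theta_{K_\ell}(\tau)$. For $E'(1)$, I would first differentiate $\Delta_z f_t=t(1-t)f_t$ in $t$ at $t=1$ to obtain $\Delta_z f'_1=-f$, where $f'_1:=\partial_t f_t|_{t=1}$. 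Differentiating $E(t)$ under the integral sign gives $E'(1)=\int_M f'_1(z)\,\Theta_L(\tau,z,\Delta_z\varphi_0)\,d\mu(z)$. Now $f'_1\in A_0(\Gamma)$ has growth at most $y\log y$ at each cusp, with constant term obtained by differentiating $A_\ell(t)y^t+B_\ell(t)y^{1-t}$ in $t$ at $t=1$, namely
\[
a'_\ell(0,y,1)=A'_\ell(1)\,y+A_\ell(1)\,y\log y+B'_\ell(1)-B_\ell(1)\log y,
\]
so Proposition \ref{prop:reg2.5} applies. Computing $(1-y\partial_y)$ of this expression and evaluating at $y=T$ produces the boundary term $-A_\ell(1)T-B_\ell(1)\log T+B_\ell(1)+B'_\ell(1)$. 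Substituting this and $\Delta_z f'_1=-f$ into Proposition \ref{prop:reg2.5} and comparing with the defining limit for $I^{reg}(\tau,f)$ from Proposition \ref{prop:reg2} (in the $s'=1$ case) yields
\[
E'(1)=-I^{reg}(\tau,f)-\sum_{\ell\in\Gamma\bs\Iso(V)}\frac{\eps_\ell(B_\ell(1)+B'_\ell(1))}{\sqrt{N}}\tilde\Theta_{K_\ell}(\tau).
\]

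Substituting $E(1)$ and $E'(1)$ into $J^{reg}(\tau,f)=E(1)-E'(1)$ gives the asserted identity after the $B_\ell(1)$ terms cancel. The main technical obstacle is the delicate boundary calculation in the $E'(1)$ step: the $t$-derivative of the constant term carries both a $y\log y$ and a $\log y$ term, and the operator $(1-y\partial_y)$ must convert them into boundary contributions that, together with the sign flip coming from $\Delta_z f'_1=-f$, line up exactly with the $A_\ell(1)T+B_\ell(1)\log T$ subtractions built into the definition of $I^{reg}(\tau,f)$, leaving only the residual $B'_\ell(1)$ term. A secondary technical point is justifying differentiation of $E(t)$ under the integral uniformly near $t=1$, which follows from the square exponential decay of $\Theta_L(\tau,z,\Delta_z\varphi_0)$ together with locally uniform bounds on $f_t$ and $\partial_t f_t$ inherited from the Poincar\'e/Eisenstein series construction of Proposition \ref{prop:defor}.
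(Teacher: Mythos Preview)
Your proof is correct and follows essentially the same approach as the paper's: you package the integral as $E(t)$ and extract $J^{reg}(\tau,f)=E(1)-E'(1)$, whereas the paper inserts the Taylor expansion $f_s=f+f'_1(s-1)+O((s-1)^2)$ directly into the defining integral; both routes produce the same two terms, and both are then computed via Proposition~\ref{prop:reg3} for $E(1)$ and Proposition~\ref{prop:reg2.5} combined with Proposition~\ref{prop:reg2} for $E'(1)$. Your boundary computation $[(1-y\partial_y)a'_\ell(0,y,1)]_{y=T}=-A_\ell(1)T+B_\ell(1)+B'_\ell(1)-B_\ell(1)\log T$ matches the paper's exactly, and the cancellation of the $B_\ell(1)$ terms in $E(1)-E'(1)$ yields the stated identity.
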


\begin{proof}
We put the Laurent expansion $f_s(z) = f(z)+f'_1(z)(s-1)+O((s-1)^2)$ of $f_s$ at $s=1$ into the definition of $J^{reg}(\tau,f)$. Here $f'_1(z)$ means the value of $f'_s(z)= \frac{\partial}{\partial s} f_s(z)$ at $s=1$.
Noticing that $\CT_{s=1}[\frac{1}{s(1-s)}]=1$,  we obtain that
\begin{align}
\label{eq:jint1}
J^{reg}(\tau,f)= \int_M f(z) \Theta_L(\tau,z,\Delta_z\varphi_0)\,d\mu(z)
-\int_M f_1'(z) \Theta_L(\tau,z,\Delta_z\varphi_0)\,d\mu(z).
\end{align}
According to Proposition \ref{prop:reg3}, the first term on the right hand side is equal to
\[
-\sum_{\ell\in \Gamma\bs\Iso(V)} \frac{B_\ell(1)\eps_\ell}{\sqrt{N}}\tilde \Theta_{K_\ell}(\tau).
\]

We compute the second term on the right hand side of \eqref{eq:jint1} by means of Proposition \ref{prop:reg2.5}.
We have
\[
\left[(
1-y\frac{\partial}{\partial y}) a_\ell(0,y,s)\right]_{y=T}= (1-s)A_\ell(s)T^s+sB_\ell(s)T^{1-s}.
\]
Consequently, for the derivative with respect to $s$ at $s=1$ we find
\[
\left[(
1-y\frac{\partial}{\partial y}) a_\ell'(0,y,1)\right]_{y=T}= -A_\ell(1)T+B_\ell(1)+B_\ell'(1)-B_\ell(1)\log(T).
\]
If we call this quantity $C_\ell(T)$, we obtain
\begin{align*}
&\int_M f'_1(z) \Theta_L(\tau,z,\Delta_z\varphi_0)\,d\mu(z)\\
&=
\sum_{\ell\in \Gamma\bs \Iso(V)}
\lim_{T\to \infty}\Bigg[
\int_{\calF_T^{\alpha_\ell}} (\Delta_z f'_1)(\sigma_\ell z) \Theta_L(\tau,\sigma_\ell z,\varphi_0) d\mu(z)-\frac{\eps_\ell\tilde\Theta_{K_{\ell}}(\tau)}{\sqrt{N}} C_\ell(T)
\Bigg].
\end{align*}
Since $\Delta f_s= s(1-s) f_s$, we have $\Delta f_1'=-f$. By means of Proposition
\ref{prop:reg2}, we get
\begin{align*}
&\int_M f'_1(z) \Theta_L(\tau,z,\Delta_z\varphi_0)\,d\mu(z)=-I^{reg}(\tau,f)-\sum_{\ell\in \Gamma\bs \Iso(V)}\left(B_\ell(1)+B_\ell'(1)\right)\frac{\eps_\ell\tilde\Theta_{K_{\ell}}(\tau)}{\sqrt{N}}.
\end{align*}
Inserting this into \eqref{eq:jint1}, we obtain the assertion.
\end{proof}

In particular, we see that the regularized theta integral $J^{reg}(\tau,f)$ depends on the choice of the spectral deformation $f_s$. However, the dependency is mild, since only the derivatives of the constant terms in the Fourier expansions at $s=1$ enter.

\section{The lift of Poincar\'e series and the regularized lift of $j_m$}
\label{sect:6}

\subsection{Scalar valued  Poincar\'e series of weight $0$}
\label{sect:3.3}

Here we construct scalar valued Poincar\'e series of weight $0$ for the
group $\Gamma\subset G(\Q)$.  We will show in Section \ref{sect:6}
that the theta lifts of these series are given by linear combinations
of the Poincar\'e series of the previous subsection.  For simplicity we
assume for the construction that
$\Gamma=\Gamma_0(N)$, since this is the only case which we will need later for the comparison of our results with \cite{DIT}. We let $\Gamma_{\infty}= \langle \kzxz{1}{1}{0}{1}\rangle$ be the subgroup of translations.

Let $I_{\nu}(z)$  be the usual modified  Bessel function as in \cite[Chapter 9.6]{AbSt}.
For $s\in \C$, $y\in
\R_{>0}$ and $n\in \Q$, we let
\begin{align}\label{calI}
\calI_{n}(y,s)=
\begin{cases}
2\pi |n|^{\frac{1}{2}} y^{\frac{1}{2}} I_{s-\frac{1}{2}}(2\pi |n|y),& \text{if $n\neq 0$,}\\
y^{s}, & \text{if $n= 0$.}
\end{cases}
\end{align}
For $m\in \Z$ we define
\begin{align}\label{DefF}
G_{m}(z,s)=\frac{1}{2}\sum_{\gamma\in \Gamma_\infty\bs \Gamma}
\left[ \calI_{m}(y,s) e(m x) \right]\mid_{0}\gamma.
\end{align}
The series converges for $\Re(s)>1$ and defines a weak Maass form of
weight $0$ for $ \Gamma_0(N)$. It has the
eigenvalue $s(1-s)$ under $\Delta_{0}$.
The function $G_0$ is the usual Eisenstein series while $G_m$ for $m
 \neq 0$ was studied by Neunh\"offer \cite{Ne} and Niebur \cite{Ni},
 among others.
If $m\neq 0$, it follows from its Fourier expansion, Weil's bound and the properties
of the $I$-Bessel function that $G_m(z,s)$ has a holomorphic continuation to $\Re(s)>3/4$.
If $m<0$, then $G_m(z,1)\in H^+_0(\Gamma)$.

\subsection{Vector valued Poincar\'e  series of half-integral weight}
\label{sect:3.2}
We recall the definition of vector valued Poincar\'e series for the
Weil representation in a setup which is convenient for the present
paper. These series are vector valued analogues of the Poincar\'e
series of weight $1/2$ considered in \cite[Section 2]{DIT}.

Let $M_{\nu,\,\mu}(z)$ and $W_{\nu,\,\mu}(z)$ \label{bi4} be the usual
Whittaker functions (see p.~190 of \cite{AbSt}).  For $s\in \C$, $v\in
\R_{>0}$ and $n\in \Q$, we let
\begin{align}\label{calM}
\calM_{n}(v,s)=
\begin{cases} \Gamma(2s)^{-1}(4\pi |n| v)^{-1/4} M_{\frac{1}{4}\sgn n,\,s-\frac{1}{2}}(4\pi |n|v),& \text{if $n\neq 0$,}\\
v^{s-\frac{1}{4}}, & \text{if $n= 0$.}
\end{cases}
\end{align}
For $h\in L'/L$,  and $m\in \Z+Q(h)$ we define
\begin{align}\label{DefcalF}
P_{m,h}(\tau,s)=\frac{1}{2}\sum_{\gamma\in \Gamma'_\infty\bs \Gamma'}
\left[ \calM_{m}(v,s) e(m u)\frake_h\right]\mid_{1/2,L}\gamma.
\end{align}
Here
 $\Gamma'_\infty:=\langle T\rangle\subset\tilde\Gamma$ with $T = \left( \kzxz{1}{1}{0}{1},1\right)$. The series converges for $\Re(s)>1$ and defines a weak Maass form of
weight $1/2$ for $ \Gamma'$ with representation $\rho_L$. It has the
eigenvalue $(s-\frac{1}{4})(\frac{3}{4}-s)$ under $\Delta_{1/2}$.
When $Q(h)\in \Z$ and $m=0$, the function $P_{0,h}(\tau,s)$ is a
vector valued Eisenstein series of weight $1/2$.

For $L$ as in Example~\ref{nicelattice} we may apply the map \eqref{+space-form} to $P_{m,h}(\tau,s)$ for $m=\frac{d}{4N}$ and $d\in \Z$ to obtain the Poincar\'e series $P_d^+(\tau,s)$ which were considered in \cite{DIT}.


\subsection{The lift of Poincar\'e series}

We now assume that $L$ is the lattice defined in
Remark \ref{nicelattice}.  We identify the finite quadratic module
$L'/L$ with $\Z/2N\Z$ equipped with the quadratic form $r\mapsto
r^2/4N$.  Moreover, we assume that $\Gamma=\Gamma_0(N)$.  In this
section we will explicitly calculate the regularized lift of the
Poincar\'e series $G_{-m}(z,s)$ defined in Section \ref{sect:3.3} for
$m\geq 0$.

For the cusp $\ell_0=\infty$, we can realize $W=V\cap \ell^\perp/\ell$
as $\Q \kzxz{1}{}{}{-1}$. Hence $K:=K_{\ell_0} = \Z
\kzxz{1}{}{}{-1}$. Moreover $L'/L \simeq K'/K$.  For $\alpha,\beta\in
W(\R)$ we define the $\C[K'/K]$-valued theta series
\[
\Theta_{K}(\tau, \alpha,\beta) = \sum_{
\lambda\in K'} e\big( Q(\lambda+\beta)\tau
-(\lambda +\beta/2, \alpha)  \big) \mathfrak{e}_{\lambda+K},
\]
and we write $\theta_{K,h}(\tau,\alpha,\beta)$ for the individual
components.  Notice that the theta function $\Theta_{K}(\tau)$ defined
earlier is equal to $\Theta_{K}(\tau, 0,0)$.  The following
proposition is a special case of \cite[Theorem~5.2]{Bo1}.

\begin{proposition}
\label{prop:poincare}
We have the identity
\begin{align*}
\Theta_L(\tau,z,\varphi_0) &= \sqrt{N}y\Theta_{K}(\tau,0,0)\\
&\phantom{=}
{}+ \frac{\sqrt{N}y}{2}\sum_{n=1}^\infty \sum_{\gamma\in \Gamma'_\infty\bs \Gamma'}
\left[ \exp\left(-\frac{\pi N n^2 y^2}{v}\right) \Theta_K(\tau,nx ,0)\right]\mid_{1/2,K}\gamma.
\end{align*}
\end{proposition}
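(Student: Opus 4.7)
The identity is the specialization of Borcherds' theta series expansion at a cusp [Bo1, Theorem~5.2] to our situation: the lattice $L$ of Example~\ref{nicelattice}, the isotropic line $\ell_0=\infty$, and the primitive norm-zero vector $u_0=\kzxz{0}{1}{0}{0}$ (so $\beta_{\ell_0}=1$). My plan is to carry out the corresponding partial Poisson summation in the $u_0$-direction and to recognize the two terms on the right-hand side.

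First I would parametrize $X\in L'+h$ by coordinates $(x_1,x_2,x_3)$ relative to the basis $\kzxz{1}{0}{0}{-1}$, $\kzxz{0}{1/N}{0}{0}$, $\kzxz{0}{0}{1}{0}$, so that $x_1\in\tfrac{1}{2N}\Z+(L+h)_K$ is the $K$-coordinate, $x_2\in\Z$ is the coordinate along $u_0$, and $x_3\in\Z$ is the coordinate in the dual isotropic direction. Using \eqref{Rformel} together with the explicit formula for $X(z)$ in the upper half-plane, the exponent $\pi i(X,X)_{\tau,z}$ becomes a quadratic polynomial in $x_2$ whose leading coefficient is $-\pi y^2/v$ (coming from $R(X,z)$) and whose linear part in $x_2$ depends on $x=\Re(z)$ and on $(x_1,x_3)$. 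Poisson summation applied to $\sum_{x_2\in\Z}$ then produces the Jacobian factor $\sqrt{N}y$, a Gaussian $\exp(-\pi N n^2 y^2/v)$ for each dual Fourier mode $n\in\Z$, and phase factors that combine with the remaining $(x_1,x_3)$-sum to give exactly $\Theta_K(\tau,nx,0)$.

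Second, I would split off the $n=0$ term; since it is independent of $x$, it collects into the unaveraged summand $\sqrt{N}y\,\Theta_K(\tau,0,0)$. For $n\neq 0$, I would pair $n$ with $-n$ (accounting for the factor $\tfrac{1}{2}$) and recognize the resulting $n\geq 1$ tail as the $\gamma=\mathrm{id}$ representative in the Poincar\'e sum over $\Gamma_\infty'\bs\Gamma'$ of the $T$-invariant seed $\exp(-\pi N n^2 y^2/v)\,\Theta_K(\tau,nx,0)$. The promotion to the full Poincar\'e average is forced by the fact that the left-hand side transforms in the Weil representation $\rho_L$ under $\Gamma'$ while the proposed seed is manifestly $T$-invariant but not $S$-invariant; the unique $\Gamma'$-averaged completion of this seed with the correct automorphy must therefore agree with the left-hand side, provided the average converges.

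The main technical obstacle will be aligning the slash-action $|_{1/2,K}$ on the Jacobi-theta function $\Theta_K(\tau,\alpha,\beta)$, which combines the Weil representation on $\C[K'/K]$ with the classical Jacobi theta transformation formula $\Theta_K(\gamma\tau,(c\tau+d)\alpha,\beta-c\alpha)\cdot j(\gamma,\tau)^{-1/2}=(\rho_K(\gamma)\Theta_K)(\tau,\alpha,\beta)$, with the output of Poisson summation applied to $\gamma^{-1}L$ for a general $\gamma\in\SL_2(\Z)$. Once this slash-law is correctly matched, convergence of the right-hand side is automatic: the Gaussian factor $\exp(-\pi N n^2 y^2/v)$ decays super-exponentially in $n$ and dominates any polynomial growth introduced by the $\Gamma_\infty'\bs\Gamma'$ average, so the identity follows at once.
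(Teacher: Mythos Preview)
Your plan has a genuine gap in how the $x_3$-coordinate (the coefficient along the second isotropic direction $\kzxz{0}{0}{1}{0}$) is handled. After Poisson summation in the $u_0$-direction you obtain a dual variable $n\in\Z$, but you still carry the full sum over $x_3\in\Z$; the sublattice $K=\Z\kzxz{1}{0}{0}{-1}$ is one-dimensional, so the $(x_1,x_3)$-sum cannot collapse to $\Theta_K(\tau,nx,0)$ as you assert---$\Theta_K$ only absorbs $x_1$. What actually happens (and this is the content of Borcherds' Theorem~5.2 specialized here) is that the pair $(x_3,n)$ together parametrizes $(c,d)\in\Z^2$, giving an intermediate expression of the shape
\[
\sqrt{N}\,y\sum_{c,d\in\Z}\exp\!\Big(-\frac{\pi N y^2}{2v}|c\tau+d|^2\Big)\Theta_K(\tau,dx,-cx).
\]
The Poincar\'e sum then emerges by writing each nonzero $(c,d)$ as $n'(c',d')$ with $\gcd(c',d')=1$ and $n'\ge 1$, identifying the coprime pair $(c',d')$ with the bottom row of a coset representative $\gamma\in\Gamma'_\infty\bs\Gamma'$, and invoking the Jacobi transformation law of $\Theta_K(\tau,\alpha,\beta)$ under $\gamma$ to rewrite the $(c',d')$-term as the $\gamma$-slash of the $(0,1)$-term.

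Your ``promotion by uniqueness'' argument cannot replace this step: knowing that the left-hand side is $\rho_L$-automorphic and that a candidate $T$-invariant seed exists does not pin down the automorphic form, since there is no statement that every element of $A_{1/2,L}$ is the Poincar\'e average of its $\gamma=\mathrm{id}$ contribution. You must actually match the $x_3\neq 0$ terms on the left with the $\gamma\neq\mathrm{id}$ terms on the right, and this is exactly the gcd-extraction plus $\Theta_K$-transformation computation that Borcherds carries out. The paper itself simply cites \cite[Theorem~5.2]{Bo1} for this identity rather than reproving it.
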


\begin{theorem}
\label{thm:liftp}
Assume that $\Re(s)>1$.
If $m$ is a positive integer we have that
\begin{align*}
I^{reg}(\tau, G_{-m}(z,s)) =
\sqrt{\pi N}\Gamma\left(s/2\right)
\sum_{n|m}
P_{\frac{m^2}{4Nn^2},\frac{m}{n}}(\tau, \frac{s}{2}+\frac{1}{4}).
\end{align*}
For  $m=0$ we have
\begin{align*}
I^{reg}(\tau,G_{0}(z,s))
&=\frac{N^{\frac{1}{2}-\frac{s}{2}}}{2}\zeta^*(s)P_{0,0}(\tau,\frac{s}{2}+\frac{1}{4}).
\end{align*}
Here $\zeta^*(s)=\pi^{-s/2}\Gamma(s/2)\zeta(s)$, and $P_{m,h}(\tau,s)$ denotes the $\C[L'/L]$-valued weight $1/2$ Poincar\'e series defined in Section \ref{sect:3.2}.
\end{theorem}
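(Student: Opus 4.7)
The plan is to unfold the Poincar\'e series $G_{-m}(z,s)$ against the theta kernel and then insert the mixed-model expansion of $\Theta_L(\tau,z,\varphi_0)$ near the cusp $\infty$ supplied by Proposition \ref{prop:poincare}. For $\Re(s)$ sufficiently large, I would first show that the regularized lift reduces to the absolutely convergent unfolded integral
\[
I^{reg}(\tau, G_{-m}(z,s)) = \tfrac12 \int_0^\infty\int_0^1 \calI_{-m}(y,s)\, e(-mx)\,\Theta_L(\tau,z,\varphi_0)\,\tfrac{dx\,dy}{y^2};
\]
for $m>0$ this is immediate from the square-exponential decay provided by Proposition \ref{theta-decay} paired with the polynomial growth of $\calI_{-m}(y,s) e(-mx)$, while for $m=0$ the outer $y$-integral is divergent and the regularization of Definition \ref{def:iint} must be carried along throughout the computation.

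Inserting Proposition \ref{prop:poincare}, the ``constant'' piece $\sqrt{N}\,y\,\Theta_K(\tau,0,0)$ is $x$-independent: for $m>0$ the $x$-integral against $e(-mx)$ annihilates it, while for $m=0$ it produces the divergent $y$-integral to be handled by the regularization. In the Poincar\'e piece, one expands
\[
\Theta_K(\tau,nx,0) = \sum_{\lambda\in K'} e\bigl(Q(\lambda)\tau - n(\lambda,x)\bigr)\,\frake_{\lambda+K},
\]
pulls the $\Gamma'$-sum outside the $(x,y)$-integral (the slash operator acts only on $\tau$), and performs the $x$-integral over $[0,1]$. Writing $\lambda = \tfrac{r}{2N}\kzxz{1}{0}{0}{-1}$ so that $(\lambda,\alpha)=r$ when $\alpha = \kzxz{1}{0}{0}{-1}$, the integral $\int_0^1 e(-mx - rnx)\,dx$ is nonzero only when $r = -m/n$; for $m>0$ this forces $n\mid m$ and yields $Q(\lambda) = m^2/(4Nn^2)$ with class $m/n\in L'/L$, while for $m=0$ only $\lambda=0$ survives.

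The remaining $y$-integral is a classical Mellin--Bessel transform $\int_0^\infty y^{-1/2}I_{s-1/2}(2\pi my)\,e^{-\pi Nn^2 y^2/v}\,dy$ (or $\int_0^\infty y^{s-1}e^{-\pi Nn^2 y^2/v}dy = \tfrac12\Gamma(s/2)(\pi Nn^2/v)^{-s/2}$ when $m=0$). For $m>0$ it evaluates, via the power series of $I_{s-1/2}$ and the substitution $t=y^2$, to a Kummer confluent hypergeometric function which is precisely the $M$-Whittaker function appearing in $\calM_{m^2/(4Nn^2)}(v,s/2+1/4)$; reassembling the $\Gamma'$-sum exhibits the weight-$1/2$ Poincar\'e series $P_{m^2/(4Nn^2),\,m/n}(\tau,s/2+1/4)$, and collecting normalizations yields the claimed factor $\sqrt{\pi N}\,\Gamma(s/2)$. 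For $m=0$ the $n\geq 1$ terms sum over $n$ via $\sum n^{-s} = \zeta(s)$ and combine with the regularized $n=0$ contribution into $\tfrac{N^{1/2-s/2}}{2}\zeta^*(s)\, P_{0,0}(\tau,s/2+1/4)$. The principal technical hurdle will be the $m=0$ case: the Rankin--Selberg bookkeeping required to reconcile the divergent $n=0$ piece with the convergent $n\geq 1$ sum into the single completed zeta value, and the justification that unfolding commutes with the truncate-then-take-constant-term regularization defining $I^{reg}$; the Mellin--Bessel evaluation and the tracking of constants through the various normalizations of $\calI$, $\calM$, and the theta kernel are routine but bookkeeping-heavy.
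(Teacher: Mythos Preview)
Your overall plan---unfold the Poincar\'e series, insert the mixed-model formula from Proposition~\ref{prop:poincare}, perform the $x$-integral to pick out $n\mid m$, and evaluate the resulting $y$-integral as a Whittaker function---matches the paper's computation. The difficulty is entirely in the first step, and there your argument has a genuine gap.

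You write that for $m>0$ the unfolding is ``immediate from the square-exponential decay provided by Proposition~\ref{theta-decay} paired with the polynomial growth of $\calI_{-m}(y,s)e(-mx)$.'' Both halves of this are wrong. First, $\calI_{-m}(y,s)=2\pi m^{1/2}y^{1/2}I_{s-1/2}(2\pi my)$ grows like $e^{2\pi my}$ as $y\to\infty$, not polynomially. Second, Proposition~\ref{theta-decay}(i) says $\Theta_L(\tau,z,\varphi_0)$ equals $y\cdot\frac{1}{\sqrt N\beta_\ell}\tilde\Theta_{K_\ell}(\tau)$ \emph{plus} an $O(e^{-Cy^2})$ error; the kernel itself \emph{grows} like $y$. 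So the integrand in your ``absolutely convergent unfolded integral'' behaves like $e^{2\pi my}\cdot y\cdot y^{-2}$ near $y=\infty$, which diverges. Even for $m=0$, where the seed really is polynomial, the original integral over $M$ does not converge and one cannot simply unfold a regularized integral without further argument.

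The paper's device to circumvent this is Proposition~\ref{prop:reg4}: since $G_{-m}(\cdot,s)$ has eigenvalue $s(1-s)\neq 0$, one has
\[
I^{reg}(\tau,G_{-m}(z,s))=\frac{1}{s(1-s)}\int_M G_{-m}(z,s)\,\Theta_L(\tau,z,\Delta_z\varphi_0)\,d\mu(z),
\]
and by Proposition~\ref{theta-decay}(iii) the kernel $\Theta_L(\tau,z,\Delta_z\varphi_0)$ genuinely is square-exponentially decaying. Now unfolding is honestly justified. The point is that $\Delta_z$ annihilates the harmonic leading term $\sqrt N\,y\,\Theta_K(\tau)$ in Proposition~\ref{prop:poincare}, so one is left with $\Delta_z\tilde\Theta_L$ where $\tilde\Theta_L$ is only the Poincar\'e piece; integrating by parts against the seed (legitimate because $\tilde\Theta_L$ and its derivatives decay square-exponentially) moves $\Delta_z$ back onto $\calI_{-m}(y,s)e(-mx)$, producing the factor $s(1-s)$ that cancels, and one arrives at exactly the $y$-integral you describe. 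In particular, for $m=0$ there is no ``regularized $n=0$ contribution'' to reconcile with the $n\ge 1$ terms: the constant piece has already been killed by the Laplacian, and only the convergent $\zeta(s)$ sum remains. Once this is in place, your description of the $x$- and $y$-integrals and the identification with $P_{m^2/(4Nn^2),\,m/n}$ is correct and matches the paper.
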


\begin{proof}
According to Proposition \ref{prop:reg4} we have
\begin{align*}
I^{reg}(\tau,G_{-m}(z,s))=\frac{1}{s(1-s)}
\int_M G_{-m}(z,s) \Theta_L(\tau,z,\Delta_z\varphi_0)\,d\mu(z).
\end{align*}
The theta function on the right hand side is square exponentially decreasing at all cusps.
Hence, by the usual unfolding argument, we find that
\begin{align*}
I^{reg}(\tau,G_{-m}(z,s))=\frac{1}{s(1-s)}
\int_{\Gamma_{\infty}\bs \H} \calI_{-m}(y,s) e(-m x)\Theta_L(\tau,z,\Delta_z\varphi_0)\, d\mu(z).
\end{align*}
By Proposition \ref{prop:poincare}, we may replace
$\Theta_L(\tau,z,\Delta_{0,z}\varphi_{0})$ by
$\Delta_{0,z}
\tilde \Theta_L(\tau,z,\varphi_{0})$, where
\[
\tilde \Theta_L(\tau,z,\varphi_{0})=\frac{\sqrt{N}y}{2}\sum_{n=1}^\infty \sum_{\gamma\in \Gamma'_\infty\bs \Gamma'}
\left[ \exp\left(-\frac{\pi N n^2 y^2}{v}\right) \Theta_K(\tau,nx ,0)\right]\mid_{1/2,K}\gamma.
\]
The function $\tilde \Theta_L(\tau,z,\varphi_{0})$ and its partial derivatives
have square exponential decay as $y \to\infty$.
Therefore, for $\Re(s)$ large, we may move the Laplace operator to $\calI_{-m}(y,s) e(-m x)$
to obtain
\begin{align}
\label{eq:lifti}
I^{reg}(\tau,G_{-m}(z,s))&=\frac{1}{s(1-s)}
\int_{\Gamma_{\infty}\bs \H}\big(\Delta_{z} \calI_{-m}(y,s) e(-m x)\big)\tilde\Theta_L(\tau,z,\varphi_0)\, d\mu(z)\\
\nonumber
&= \int_{\Gamma_{\infty}\bs \H} \calI_{-m}(y,s) e(-m x)
\tilde\Theta_L(\tau,z,\varphi_{0})\,d\mu(z)\\
\nonumber
&=\frac{\sqrt{N}}{2}\sum_{n=1}^\infty\sum_{\gamma\in \Gamma'_\infty\bs \Gamma'}  I(\tau,s,m,n)\mid_{1/2,K}\gamma,
\end{align}
where
\begin{align*}
I(\tau,s,m,n)=  \int_{y=0}^\infty\int_{x=0}^1 \calI_{-m}(y,s) e(-m x)
\exp\left(-\frac{\pi N n^2 y^2}{v}\right)
\Theta_K(\tau,nx ,0)y\,d\mu(z).
\end{align*}
Using the fact that $K'=\Z\kzxz{1/2N}{}{}{-1/2N}$ and the identification $K'/K\cong \Z/2N\Z$, we have
\[
\Theta_{K}(\tau, nx,0) = \sum_{b\in \Z} e\left(\frac{b^2}{4N}\tau-nbx\right)\frake_b.
\]
Inserting this in the formula for $I(\tau,s,m,n)$, and by integrating over $x$, we see that
$I(\tau,s,m,n)$ vanishes when  $n\nmid m$. If $n\mid m$, then  only the summand for $b=-m/n$ occurs, and so
\begin{align}
\label{eq:iformula}
I(\tau,s,m,n)&=  \int_{0}^\infty \calI_{-m}(y,s) \exp\left(-\frac{\pi N n^2 y^2}{v}\right)\frac{dy}{y}
 e\left( \frac{m^2}{4Nn^2}\tau\right)\frake_{-m/n}.
\end{align}

We first compute the latter integral for $m>0$. In this case we have
\[
\calI_{-m}(y,s)= 2\pi m^{\frac{1}{2}} y^{\frac{1}{2}} I_{s-\frac{1}{2}}(2\pi my).
\]
Inserting this and substituting $t=y^2$ in the integral, we obtain
\begin{align*}
&\int_{0}^\infty \calI_{-m}(y,s) \exp\left(-\frac{\pi N n^2 y^2}{v}\right)\frac{dy}{y}\\
&= 2\pi \int_{0}^\infty \sqrt{m y}I_{s-1/2}(2\pi m y)\exp\left(-\frac{\pi N n^2 y^2}{v}\right)\frac{dy}{y}\\
&= \pi \sqrt{m}\int_{0}^\infty I_{s-1/2}(2\pi m \sqrt{t}) \exp\left(-\frac{\pi N n^2 t}{v}\right)t^{-3/4}\,dt.
\end{align*}
The latter integral is a Laplace transform which is computed in \cite{B2} (see (20) on p.197).
%
%
Inserting the evaluation, we obtain
\begin{align*}
&\int_{0}^\infty \calI_{-m}(y,s) \exp\left(-\frac{\pi N n^2 y^2}{v}\right)\frac{dy}{y}\\
&=\frac{\sqrt{\pi}\Gamma\left(s/2\right)}{\Gamma(s+1/2)}\left(\frac{ Nn^2}{ \pi m^2 v}\right)^{1/4} M_{1/4,s/2-1/4}\left(\frac{\pi m^2 v}{Nn^2}\right)
\exp\left( \frac{\pi m^2 v}{2Nn^2}\right)\\
&=\sqrt{\pi}\Gamma\left(s/2\right)
\calM_{\frac{m^2}{4Nn^2}}(v,s/2+1/4)
\exp\left( \frac{\pi m^2 v}{2Nn^2}\right).
\end{align*}
Consequently, we have in the case $n\mid m$ that
\begin{align*}
I(\tau,s,m,n)&=  \sqrt{\pi}\Gamma\left(s/2\right)
\calM_{\frac{m^2}{4Nn^2}}(v,s/2+1/4)
e\left(\frac{m^2}{4Nn^2} u \right)\frake_{-m/n}.
\end{align*}
Substituting this in  \eqref{eq:lifti}, we see that
\begin{align*}
I^{reg}(\tau,G_{-m}(z,s))
&=\sqrt{\pi N}\Gamma\left(s/2\right)
\sum_{n\mid m}
P_{\frac{m^2}{4Nn^2},-\frac{m}{n}}(\tau,\frac{s}{2}+\frac{1}{4}).
\end{align*}
Since $P_{m,h}(\tau,s)=P_{m,-h}(\tau,s)$, this concludes the proof of the theorem for $m>0$.

We now compute integral in \eqref{eq:iformula} for $m=0$. In this case we have
$\calI_{0}(y,s)= y^s$.
Inserting this into \eqref{eq:iformula},
we find
\begin{align*}
\int_{0}^\infty \calI_{0}(y,s) \exp\left(-\frac{\pi N n^2 y^2}{v}\right)\frac{dy}{y}&=  \int_{0}^\infty \exp\left(-\frac{\pi N n^2 y^2}{v}\right)y^{s-1}\,dy\\
&= \frac{\Gamma(s/2)}{2}\left(\frac{v}{\pi N n^2}\right)^{s/2}.
\end{align*}
Hence, we obtain
\begin{align*}
I(\tau,s,m,n)&= \frac{\Gamma(s/2)}{2}\left(\frac{v}{\pi N n^2}\right)^{s/2}\frake_0. \end{align*}
Substituting into  \eqref{eq:lifti}, we see that
\begin{align*}
I^{reg}(\tau,G_{0}(z,s))&=\frac{N^{\frac{1}{2}-\frac{s}{2}}}{4}
\pi^{-s/2}\Gamma(s/2)\zeta(s)
\sum_{\gamma\in \Gamma'_\infty\bs \Gamma'}  v^{s/2}\frake_0\mid_{1/2,K}\gamma\\
&=\frac{N^{\frac{1}{2}-\frac{s}{2}}}{2}\zeta^*(s)P_{0,0}(\tau,\frac{s}{2}+\frac{1}{4}).
\end{align*}
This concludes the proof of the theorem for $m=0$.
\end{proof}

\subsection{The case of level $1$}

As an application, we consider the case $N=1$ where $\Gamma=\Sl_2(\Z)$. We compute the lift of the space $M^!_0(\Gamma)=\C[j]$. A basis for this space is given by the functions $j_m$ for $m\in \Z_{\geq 0}$ whose Fourier expansion starts as
\[
j_m (z)= q^{-m}+O(q).
\]
For instance, we have $j_0=1$ and $j_1=j-744$.

We begin by computing the lift of the constant function in terms of Eisenstein series.
As a spectral deformation of the constant function $j_0=1$ in the sense of Proposition \ref{prop:defor} we chose
\[
f_s(z)=\frac{\zeta^*(2)}{\zeta^*(2s-1)}G_0(z,s).
\]
It is well known that $G_0(z,s)$ has a first order pole at $s=1$, which cancels out against the pole of $\zeta^*(2s-1)$. We have $f_1(z)=1$, and the constant term of $f_s(z)$ at the cusp $\ell_0=\infty$ is given by
$A_\infty(s)y^s+B_\infty(s)y^{1-s}$ with
\begin{align*}
A_\infty(s)&= \frac{\zeta^*(2)}{\zeta^*(2s-1)}= \frac{\pi}{3}(s-1)+O((s-1)^2),\\
B_\infty(s)&= \frac{\zeta^*(2)}{\zeta^*(2s)}=1+\left(\gamma+\log(\pi)-\frac{12\zeta'(2)}{\pi^2}\right)(s-1)+O((s-1)^2).
\end{align*}
According to Theorem \ref{thm:liftp}, for $\Re(s)>1$, the lift of $f_s$ is equal to
\begin{align}
\label{eq:lifte}
I^{reg}(\tau,f_s)= \frac{\pi\zeta^*(s)}{12\zeta^*(2s-1)}P_{0,0}(\tau,\frac{s}{2}+\frac{1}{4}).
\end{align}
By \eqref{eq:jres}, the right hand side has a meromorphic continuation to $\C$ with a first order pole at $s=1$ with residue $B_\infty(1)\Theta_K(\tau)=\Theta_K(\tau)$. In particular, we see that
$P_{0,0}(\tau,\frac{s}{2}+\frac{1}{4})$ has a first order pole at $s=1$ with residue $\frac{6}{\pi}\Theta_K(\tau)$. We obtain the following corollary to Theorem \ref{thm:liftp}.

\begin{corollary}
We have
\begin{align*}
J^{reg}(\tau,1) &= \CT_{s=1}\left[ \frac{\pi\zeta^*(s)}{12\zeta^*(2s-1)} P_{0,0}(\tau,\frac{s}{2}+\frac{1}{4})\right],\\
I^{reg}(\tau,1) &= J^{reg}(\tau,1) -B'_\infty(1) \Theta_K(\tau).
\end{align*}
\end{corollary}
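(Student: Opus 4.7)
The plan is to combine three inputs already available: the closed form for $I^{reg}(\tau,G_0(z,s))$ given by the $m=0$, $N=1$ case of Theorem~\ref{thm:liftp}; the definition \eqref{eq:jint} of $J^{reg}$ together with Proposition~\ref{prop:reg4}; and the comparison between the two regularizations in Proposition~\ref{prop:specdef}.

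First I would plug the spectral deformation $f_s = \frac{\zeta^*(2)}{\zeta^*(2s-1)}G_0(z,s)$ into Theorem~\ref{thm:liftp}. For $N=1$, $m=0$ and $\Re(s)>1$, that theorem gives
\[
I^{reg}(\tau,G_0(z,s)) = \tfrac{1}{2}\zeta^*(s)\, P_{0,0}(\tau,\tfrac{s}{2}+\tfrac{1}{4}).
\]
By linearity of $I^{reg}$ in its input function, multiplying by the scalar $\frac{\zeta^*(2)}{\zeta^*(2s-1)}=\frac{\pi/6}{\zeta^*(2s-1)}$ produces formula \eqref{eq:lifte} displayed just before the corollary, for $\Re(s)$ large.

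Next, by definition \eqref{eq:jint}, together with Proposition~\ref{prop:reg4} applied for $s$ near but not equal to $1$, the bracketed integral in \eqref{eq:jint} equals $I^{reg}(\tau,f_s)$. Thus
\[
J^{reg}(\tau,1) = \CT_{s=1}\bigl[\, I^{reg}(\tau,f_s)\,\bigr] = \CT_{s=1}\!\left[ \frac{\pi\zeta^*(s)}{12\,\zeta^*(2s-1)}\, P_{0,0}(\tau,\tfrac{s}{2}+\tfrac{1}{4})\right],
\]
where the passage to the constant term is legitimate because the right-hand side of \eqref{eq:lifte} has a meromorphic continuation in $s$ to all of $\C$ via the known analytic properties of $\zeta^*$ and of the vector-valued Eisenstein series $P_{0,0}(\tau,\cdot)$. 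This establishes the first equation of the corollary.

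For the second equation I would invoke Proposition~\ref{prop:specdef} directly: since $N=1$ and $\Gamma=\Sl_2(\Z)$ has a single cusp $\ell_0=\infty$ with $\eps_{\infty}=1$, that proposition specializes to $J^{reg}(\tau,1) = I^{reg}(\tau,1) + B'_\infty(1)\,\Theta_K(\tau)$, which rearranges to the stated identity. There is no genuine obstacle here — all the real work was done in Theorem~\ref{thm:liftp} and Propositions~\ref{prop:reg4} and~\ref{prop:specdef}; the only point requiring any care is verifying that the $s$-dependence of $I^{reg}(\tau,f_s)$ obtained in \eqref{eq:lifte} is a bona fide meromorphic continuation (so that $\CT_{s=1}$ is meaningful and agrees with the constant term extracted from the defining integral), which follows from the fact that $f_s$ is holomorphic at $s=1$ and the right-hand side of \eqref{eq:lifte} is manifestly meromorphic.
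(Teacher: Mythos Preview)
Your proposal is correct and follows exactly the line of reasoning the paper uses: the corollary is stated without a separate proof, and the surrounding text derives \eqref{eq:lifte} from Theorem~\ref{thm:liftp} and then appeals to the definition \eqref{eq:jint}, Proposition~\ref{prop:reg4}, and Proposition~\ref{prop:specdef} just as you do. You have simply made explicit the details the paper leaves to the reader.
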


We now compute the lift of $j_m$ for $m>0$ in terms of Poincar\'e series.
It follows from the Fourier expansion, Weil's bound and the properties
of the $I$-Bessel function that $G_{-m}(z,s)$ has a holomorphic continuation to $\Re(s)>3/4$.
The constant term of the Fourier expansion of $G_{-m}(z,s)$ is equal to
\[
\frac{4\pi m^{1-s} \sigma_{2s-1}(m)}{(2s-1)\zeta^*(2s)} y^{1-s},
\]
where $\sigma_{2s-1}(m)=\sum_{d\mid m} d^{2s-1}$, see e.g.~\cite{Ni}, \cite{Fay}.
We define
\begin{equation}\label{jm}
j_m(z,s)
:=G_{-m}(z,s)-\frac{4\pi m^{1-s}\sigma_{2s-1}(m)}{(2s-1)\zeta^*(2s-1)}G_0(z,s).
\end{equation}
This function has an analytic
continuation to $\Re(s) >3/4$. The constant term in its Fourier expansion is given by
$A_\infty(s)y^s+B_\infty(s)y^{1-s}$ with
\begin{align*}
A_\infty(s)&= -\frac{4\pi m^{1-s}\sigma_{2s-1}(m)}{(2s-1)\zeta^*(2s-1)},\\
B_\infty(s)&= 0. \end{align*}
Moreover, we have
\begin{equation}
\label{niej}
    j_m(z,1)=j_m(z).
\end{equation}
Hence, we may use the functions $j_m(z,s)$ as spectral deformations of the $j_m(z)$.

According to Theorem \ref{thm:liftp}, for $\Re(s)>1$, the lift of $j_m(z,s)$ is equal to
\begin{align*}
I^{reg}(\tau,j_m(\cdot,s))= \sqrt{\pi }\Gamma\left(\frac{s}{2}\right)
\sum_{n|m}
P_{\frac{m^2}{4n^2},\frac{m}{n}}(\tau, \frac{s}{2}+\frac{1}{4})   +A_\infty(s)
\frac{\zeta^*(s)}{2}
P_{0,0}(\tau,\frac{s}{2}+\frac{1}{4}).
\end{align*}
By \eqref{eq:jres}, the right hand side has a holomorphic continuation to a neighborhood of $s=1$.
Its value at $s=1$ is equal to the regularized integral $J^{reg}(\tau,j_m)$.
Since $B_\infty(s)=0$, we obtain the following corollary to
Proposition \ref{prop:specdef}.

\begin{corollary}
For $m>0$
we have $I^{reg}(\tau,j_m)=J^{reg}(\tau,j_m)$.
\end{corollary}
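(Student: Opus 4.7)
The plan is to apply Proposition~\ref{prop:specdef} directly with the spectral deformation $f_s := j_m(z,s)$ defined in \eqref{jm}, and then to verify that the correction term on the right hand side of Proposition~\ref{prop:specdef} vanishes identically. Since $N=1$ and $\Gamma=\Sl_2(\Z)$, there is only one cusp $\ell_0=\infty$, so the correction term reduces to the single summand $B'_\infty(1)\,\tilde\Theta_{K_\infty}(\tau)$. Hence everything reduces to checking that $B_\infty'(1)=0$.

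First I would verify that $f_s = j_m(\cdot,s)$ is a legitimate spectral deformation in the sense of Proposition~\ref{prop:defor}. Both $G_{-m}(z,s)$ and $G_0(z,s)$ are weak Maass forms of weight $0$ with eigenvalue $s(1-s)$, so the linear combination in \eqref{jm} is also a weak Maass form with eigenvalue $s(1-s)$. By the remarks preceding \eqref{jm}, $G_{-m}(z,s)$ has holomorphic continuation to $\Re(s)>3/4$, while the pole of $G_0(z,s)$ at $s=1$ is cancelled by the factor $1/\zeta^*(2s-1)$ in the coefficient; this makes $f_s$ holomorphic in an open neighborhood $U$ of $s=1$. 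Equation \eqref{niej} then gives $f_1=j_m$.

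Next I would read off the constant term of the Fourier expansion of $f_s$ at $\infty$. The constant term of $G_{-m}(z,s)$ contributes only a multiple of $y^{1-s}$ (the nonzero-index Poincar\'e series has no $y^s$ contribution at $\infty$), and by construction the subtraction in \eqref{jm} exactly cancels this $y^{1-s}$ contribution. The surviving $y^s$ contribution comes from the constant term of $G_0(z,s)$, producing
\[
a_\infty(0,y,s) = -\frac{4\pi m^{1-s}\sigma_{2s-1}(m)}{(2s-1)\zeta^*(2s-1)}\, y^{s} = A_\infty(s) y^s + B_\infty(s) y^{1-s}
\]
with $B_\infty(s)\equiv 0$. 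In particular $B'_\infty(1)=0$.

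Applying Proposition~\ref{prop:specdef} then yields
\[
J^{reg}(\tau,j_m) = I^{reg}(\tau,j_m) + \frac{B'_\infty(1)\,\eps_\infty}{\sqrt{N}}\,\tilde\Theta_{K_\infty}(\tau) = I^{reg}(\tau,j_m),
\]
which is the claim. The only substantive step is the verification that the $y^{1-s}$ part of the constant term of $f_s$ is identically zero near $s=1$; the remainder is bookkeeping. I do not anticipate a genuine obstacle, since the choice \eqref{jm} was engineered precisely to kill the $y^{1-s}$ term.
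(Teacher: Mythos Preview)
Your proposal is correct and follows exactly the same line as the paper: use $j_m(z,s)$ as the spectral deformation, observe that $B_\infty(s)\equiv 0$ (so in particular $B'_\infty(1)=0$), and apply Proposition~\ref{prop:specdef}. The paper states this in a single sentence, but your more detailed justification of why $B_\infty(s)\equiv 0$ (the $y^{1-s}$ part of $G_{-m}$ is exactly cancelled by the $y^{1-s}$ part of the subtracted Eisenstein contribution) is precisely the intended computation.
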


\section{A Green function for $\varphi_0$}\label{sec:Green}

In this section, we introduce a Green function $\eta$ for the Schwartz function $\varphi_0$. Its properties will be the key for the proof of the results in Section~\ref{sec:Main-results}.

\subsection{The singular function $\eta$}

We first recall the definition of Kudla's Green function $\xi$ for $\varphi_1$ (see \cite[Section~11]{KAnn} and for our setting Remark~\ref{KM-remark}). It is defined for nonzero vectors $X \in V(\R)$ and given by
\begin{align}
\xi(X,\tau,z) &= v^{3/2}E_1(2\pi vR(X,z)) e(Q(X)\bar{\tau}) \\&= v^{3/2}\left(
\int_1^{\infty} e^{-2\pi vR(X,z) t} \frac{dt}{t} \right)
e(Q(X)\bar{\tau}). \notag
\end{align}
Here $E_1(w) =  \int_w^{\infty} e^{-t} \tfrac{dt}{t}$ with $w \in \C \back \R_{\leq 0}$ is the exponential
integral as in \cite{AbSt}. Since
\begin{equation}\label{E1formula}
E_1(w) = -\g - \log(w) + \int_{0}^w (1-e^{-t}) \tfrac{dt}{t},
\end{equation}
(the last function on the right hand is entire and is denoted by $\Ein(w)$) we directly see that $\xi$ has a logarithmic singularity for $z = D_X$ when $R(X,z)=0$ and is smooth for $Q(X) \geq 0$. Outside the singularity one has, see \cite{KAnn},
\begin{equation}\label{xi1}
dd^c\xi(X,\tau,z) =\varphi_{1}(X,\tau,z)d\mu(z),
\end{equation}
which can be also obtained via Lemma~\ref{BFDuke}. Here $d^c =
\tfrac{1}{4\pi i} (\partial - \bar{\partial})$ so that
\[
dd^c = -\tfrac1{2\pi i}
\partial \bar{\partial}= -\tfrac1{4\pi} \Delta_z d\mu(z).
\]
For the relationship between $\xi$ and $\varphi_1$ as currents, see Remark~\ref{Kudla-current}.

We now define for $X \ne 0$ our Green function $\eta$ by
\begin{align}
\eta(X,\tau,z) &= \pi \left( \int_v^{\infty} E_1(2\pi
R(X,z)t ) e^{2\pi (X,X)t}
\frac{dt}{\sqrt{t}} \right)e(Q(X){\tau}).
\end{align}
We often drop the dependence on $\tau$ in the notation. We easily calculate
\begin{equation}\label{psi}
\partial_z \eta(X,z) = -\pi \sgn (X,X(z)) \frac{(X, X'(z))}{R(X,z)} \erfc \left( \sqrt{\pi v} |(X,X(z)|\right) e(Q(X){\tau}) dz,
\end{equation}
where $X'(z) = \tfrac{\partial}{\partial z} X(z)$ and $\erfc(t) = 1- \erf(t) = \tfrac2{\sqrt{\pi}} \int_t^{\infty} e^{-r^2} dr$ is the complimentary error function. Note that
\[
X'(z) = \frac{i}{2y} X(z)  + \frac1{\sqrt{N} y} \zxz{-\frac12}{\bar{z}}{ 0}{ \frac12}.
\]
For $Q(X) \ne 0$, we now analyze the singularities of $\eta$ in more detail.

\begin{lemma}\label{eta-sing}

\begin{itemize}

\item[(i)]
Let $X \in V(\R)$ such that $Q(X)=m<0$. Then $\eta$ has a logarithmic singularity
at $z=D_X$. More precisely, we have
\[
\tilde{\eta}(X,z) := \eta(X,z) +   \pi  \frac{\erfc(2\sqrt{\pi|m|v})}
{\sqrt{|m|}} e(m{\tau}) \log |z-D_X|^2
\]
is a smooth function in a neighborhood of $D_X$. Furthermore, $\eta(X)$ and its derivatives $\partial \eta(X)$, $\bar{\partial}\eta(X)$ are square exponential decreasing (in the coordinates $x,y$) at the boundary of $D$.

\item[(ii)]
Let $X \in V(\R)$ such that $Q(X)=m>0$. Then $\eta(X,z)$ is
differentiable, but {\it not} $C^1$. We have
\[
\partial \eta(X,z) = \frac{\pi i}2 {\sgn (X,X(z))} \erfc\left(\sqrt{\pi v} |(X,X(z))|\right) e(m{\tau}) dz_X,
\]
which is discontinuous at the cycles $c_X = \{ z \in D; \, (X,X(z))=0 \}$. Furthermore, assume that $\bar{\G}_X$ is infinitely cyclic. Then $\eta(X)$ and its derivatives $\partial \eta(X)$, $\bar{\partial}\eta(X)$ are square exponential decreasing (in the coordinates $x,y$) at the boundary of the `tube' $\G_X \back D$.

\end{itemize}

\end{lemma}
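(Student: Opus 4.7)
The plan is to treat both parts by writing $\eta(X,\tau,z) = F(R(X,z))\cdot e(Q(X)\tau)$, where
\[
F(R) := \pi \int_v^{\infty} E_1(2\pi R t)\, e^{2\pi(X,X)t}\, \frac{dt}{\sqrt{t}}
\]
depends on $z\in D$ only through the smooth, non-negative function $R(X,z)$. All non-smoothness of $\eta$ is thus inherited from the behavior of $F$ at the two boundary values of its effective range: $R=0$ (attained, when $Q(X)<0$, exactly at $z=D_X$) and $R=2Q(X)$ (attained, when $Q(X)>0$, exactly along the geodesic $c_X$). Decay at the ideal boundary will come from $R(X,z)\to\infty$ together with the classical bound $E_1(w)\ll e^{-w}/w$.

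For part (i), I would isolate the logarithmic singularity of $F$ at $R=0$ using \eqref{E1formula}, $E_1(w) = -\g -\log w + \Ein(w)$. Since $\Ein$ is entire, the contributions of $-\g$, $\Ein(2\pi R t)$, and $-\log(2\pi t)$ to the $t$-integral give quantities smooth in $R$ at $R=0$ by dominated convergence, exploiting the decay $e^{2\pi(X,X)t}=e^{-4\pi|m|t}$. Only the $-\log R$ piece is singular, and the substitution $u = 2\sqrt{\pi|m|t}$ evaluates its coefficient as
\[
\pi \int_v^{\infty} e^{-4\pi|m|t}\,\frac{dt}{\sqrt{t}} = \pi \cdot \frac{\erfc(2\sqrt{\pi|m|v})}{2\sqrt{|m|}}.
\]
From \eqref{R-elliptic} one reads off $R(X,z) = c(z)\,|z-D_X|^2$ near $D_X$ with $c$ smooth and positive, so $\log R(X,z) = \log|z-D_X|^2 + (\text{smooth})$. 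Assembling the pieces exhibits $\tilde\eta$ as a smooth function near $D_X$, up to tracking the overall constant asserted in the statement.

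For part (ii), differentiating under the integral via $E_1'(w) = -e^{-w}/w$ and using $\partial_z R = (X,X(z))\,(X,X'(z))\,dz$ together with the evaluation
\[
\int_v^{\infty} e^{-\pi(X,X(z))^2 t}\, t^{-1/2}\,dt = \frac{\erfc(\sqrt{\pi v}|(X,X(z))|)}{|(X,X(z))|}
\]
produces exactly the formula \eqref{psi}. Since $R(X,z)\geq 2m>0$ globally, $F$ is bounded and $\eta$ is globally continuous; the $\sgn(X,X(z))$ factor in $\partial_z\eta$ jumps across $c_X$ while the other factors are smooth and nonzero there, so $\partial\eta$ has a bounded jump discontinuity along $c_X$ and $\eta$ fails to be $C^1$. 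A short computation integrating $F'(R)$ yields $F(R)-F(2m) = -C\sqrt{R-2m} + O(R-2m)$, so the local structure of $\eta$ transverse to $c_X$ is that of the absolute value $|(X,X(z))|$; making precise in what sense $\eta$ nevertheless remains ``differentiable'' at points of $c_X$ is the main technical subtlety of the lemma.

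The decay statements in both parts rest on $R(X,z)\to\infty$ at the appropriate ideal boundary. For $Q(X)<0$, the identity $R = 2|m|\sinh^2 d(z,D_X)$ (equivalent to \eqref{R-elliptic} up to sign convention) gives exponential growth of $R$ in hyperbolic distance and hence at least quadratic growth in $y$ in cuspidal coordinates. For $Q(X)>0$ with $\bar\G_X$ infinite cyclic, the companion identity $|(X,X(z))| = 2\sqrt{m}\sinh d(z,c_X)$ forces $R\to\infty$ exponentially as one approaches either end of the tube $\G_X\bs D$. In either case the bound $E_1(w)\ll e^{-w}/w$ gives $F(R)\ll e^{-CR}/R^2$ at the respective boundaries, which is the claimed square-exponential decay of $\eta$; the same estimate applied to the closed form \eqref{psi} handles $\partial\eta$ and $\bar\partial\eta$.
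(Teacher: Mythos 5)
Your proposal is correct and follows essentially the same route as the paper's proof: the paper likewise isolates the logarithmic singularity by writing $E_1(w)=-\gamma-\log w+\Ein(w)$ and invoking \eqref{R-elliptic}, obtains part (ii) from the closed formula \eqref{psi} (specialized to the standard $X$ as in \eqref{del-eta-formula}), and deduces the boundary decay from $E_1(w)\le e^{-w}/w$ together with the growth of $(X,X(z))^2$ (equivalently of $R(X,z)$) at the boundary of $D$, resp.\ of the tube $\G_X\back D$. Two reassurances about the points you hedge on: the constant you compute, $\pi\,\erfc(2\sqrt{\pi|m|v})/(2\sqrt{|m|})$, is the correct coefficient of $-\log|z-D_X|^2$ (it is the one that reappears in Proposition~\ref{big-current-elliptic} and in Theorem~\ref{th:Main-Maass}; the $1/\sqrt{|m|}$ in the lemma's statement is a factor-of-two slip), and the transverse corner of type $|(X,X(z))|$ that you observe is genuine, so ``differentiable'' must be read loosely as continuity with one-sided derivatives along $c_X$ --- which is all the paper's proof establishes and all that is used later, e.g.\ in the proof of Proposition~\ref{big-current-hyperbolic}, where only the continuity of $\eta$ and the one-sided limits of $\partial\eta$ at $c_X$ enter.
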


\begin{proof}

For (i), we have $m<0$. Via \eqref{E1formula} we therefore immediately see that
\[
\eta(X,z) +  \pi \log R(X,z) \left( \int_v^{\infty} e^{-4\pi|m| t}
\frac{dt}{\sqrt{t}} \right)e(m{\tau})
\]
is smooth. By \eqref{R-elliptic} the singularity of $\eta(X,z)$ at $z=D_X$ is hence given by
\[
-\pi  \left( \int_v^{\infty} e^{-4\pi|m| t}
\frac{dt}{\sqrt{t}} \right)e(m{\tau}) \log |z-D_X|^2= - \pi  \frac{\erfc(2\sqrt{\pi|m|v})}
{\sqrt{|m|}} e(m{\tau}) \log |z-D_X|^2.
\]
Since $E_1(w) \leq e^{-w}/w$, we have $
|\eta(X)| \leq \pi   \left(\int_{v}^{\infty} \frac{e^{-\pi (X,X(z))^2}}{R(X,z)} t^{-3/2} dt \right)e(m{\tau})$.
Now the growth behavior follows from $(X,X(z))^2 = \tfrac{N}{y^2}\left(x_3|z|^2 -2x_1Re(z)-x_2\right)^2$ for $X = \kzxz{x_1}{x_2}{x_3}{-x_2}$. Note that since $Q(X)<0$ we must have $x_3 \ne 0$.

By the $G$-equivariance properties of $\eta$, $X(z)$, and $dz_X$, it suffices to show (ii) for $X = \pm \kzxz{\sqrt{m/N}}{}{}{-\sqrt{m/N}}$. Then we have $(X,X(z)) = \mp 2x\sqrt{m}/y$, $R(X,z) = 2m|z|^2/y^2$, $(X,\partial X(z)) = i\sqrt{m} \bar{z}/y^2 dz $, and $dz_X = \pm dz/\sqrt{m}z$. Hence by \eqref{psi} we obtain
\begin{equation}\label{del-eta-formula}
\partial \eta(X) =  -\sgn(x) \frac{\pi i}{2\sqrt{m}} \erfc\left(2\sqrt{\pi vm} \tfrac{|x|}{y} \right) e(m{\tau})\frac{dz}{z},
\end{equation}
which is the asserted equality for this $X$. For this $X$, we have $\bar{\G}_X=  \left\langle \left( \begin{smallmatrix}
r&0\\0&r^{-1} \end{smallmatrix} \right) \right\rangle $ with some $r >1$. Hence $c_X$ is the imaginary axis and a fundamental domain for $\G_X \back D$ is given by the annulus $\{ z \in D; \,  1 \leq |z| < r \, \}$. Then \eqref{del-eta-formula} implies the very rapid decay in $\G_X \back D$.
\end{proof}

The analog to \eqref{xi1} is

\begin{proposition}\label{new-xi-prop}
Outside the singularities, we have
\[
dd^c \eta(X,\tau,z) = \varphi_0(X,\tau,z)d\mu(z).
\]
\end{proposition}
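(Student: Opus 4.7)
The plan is to compute $\bar\partial\partial\eta$ directly from the formula for $\partial\eta$ already established in \eqref{psi}, and match it with $\varphi_0\,d\mu(z)$ via the identity $dd^c=\tfrac{1}{2\pi i}\bar\partial\partial$. To streamline notation, write $A=A(z):=(X,X(z))$, so that \eqref{psi} reads $\partial\eta=f\,dz$ with
\[
f=-\pi\,\sgn(A)\,\frac{A_z}{R(X,z)}\,\erfc(\sqrt{\pi v}\,|A|)\,e(Q(X)\tau),
\]
where $A_z=(X,X'(z))$. Applying $\partial/\partial\bar z$ by the product rule produces two contributions: a term with a factor of $\erfc(\sqrt{\pi v}|A|)$ coming from differentiating the rational prefactor (which expands as $A_{z\bar z}/R-A\,A_zA_{\bar z}/R^2$, using $R_{\bar z}=A\,A_{\bar z}$), and a term with a factor of $e^{-\pi v A^2}$ coming from $\erfc'(s)=-\tfrac{2}{\sqrt\pi}e^{-s^2}$ together with $\partial|A|/\partial\bar z=\sgn(A)\,A_{\bar z}$.

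The crux of the argument is the pair of pointwise identities
\[
A_z\,A_{\bar z}=\frac{R(X,z)}{2y^2},\qquad A_{z\bar z}=\frac{A}{2y^2},
\]
valid everywhere on $D$. Both follow from the explicit formula $X(z)=\tfrac{1}{\sqrt{N}y}\bigl(\begin{smallmatrix}-x & z\bar z\\ -1 & x\end{smallmatrix}\bigr)$ by a short calculation: writing $A=\tfrac{\sqrt{N}}{y}P$ with $P(z)=x_3 z\bar z-x_1(z+\bar z)-x_2$ for $X=\bigl(\begin{smallmatrix}x_1 & x_2\\ x_3 & -x_1\end{smallmatrix}\bigr)$, the first identity reduces (after expanding $A_z A_{\bar z}$ and simplifying the cross term via $z-\bar z=2iy$) to the relation $x_1^2+x_2x_3=-\det(X)=Q(X)/N$ together with $R=(X,X)+A^2/2$, while the second identity is equivalent to $A$ being a $\Delta_z$-eigenfunction with eigenvalue $-2$. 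Together they give $A_{z\bar z}/R=A\,A_z A_{\bar z}/R^2=A/(2y^2 R)$, so the $\erfc$-contribution to $\partial f/\partial\bar z$ vanishes identically.

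The exponential contribution simplifies using $A_z A_{\bar z}/R=1/(2y^2)$ and $\sgn(A)^2=1$ to
\[
\frac{\partial f}{\partial\bar z}=\frac{\pi\sqrt{v}}{y^2}\,e(Q(X)\tau)\,e^{-\pi v A^2}.
\]
The identity $R(X,z)-2Q(X)=A^2/2$ (immediate from $R=(X,X)+A^2/2$ and $(X,X)=2Q(X)$) then gives $e(Q(X)\tau)\,e^{-\pi v A^2}=e(Q(X)\bar\tau)\,e^{-2\pi v R(X,z)}$, so $\partial f/\partial\bar z=\pi\varphi_0(X,\tau,z)/y^2$. Since $dd^c\eta=\tfrac{1}{\pi}(\partial f/\partial\bar z)\,dx\wedge dy$ and $d\mu(z)=dx\wedge dy/y^2$, this yields $dd^c\eta=\varphi_0(X,\tau,z)\,d\mu(z)$, as required. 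The only nontrivial part is the verification of the two algebraic identities for $A(z)$; everything else is bookkeeping, and the remarkable cancellation of the $\erfc$-terms is exactly what allows $\eta$ to serve as a Green function for $\varphi_0$.
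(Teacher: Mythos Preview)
Your proof is correct and takes a genuinely different route from the paper's. The paper proves the identity by exploiting the integral representation of $\eta$: it moves $dd^c$ under the integral $\int_v^\infty E_1(2\pi R(X,z)t)e^{2\pi(X,X)t}t^{-1/2}\,dt$, invokes Kudla's identity \eqref{xi1} to convert $dd^c E_1(\cdot)$ into $\varphi_1$, rewrites $\varphi_1$ as $-\tfrac{1}{\pi}L_{1/2}\varphi_0$ via \eqref{KM-def}, and then recognizes the resulting integrand as a total $t$-derivative that evaluates to $\varphi_0$. Your argument bypasses $\xi$, $\varphi_1$, and the integral representation entirely: you start from the closed formula \eqref{psi} for $\partial\eta$ and differentiate once more, reducing everything to the two pointwise identities $A_z A_{\bar z}=R/(2y^2)$ and $A_{z\bar z}=A/(2y^2)$ for $A=(X,X(z))$. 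The paper's approach makes the structural relation to Kudla's Green function transparent (this is the point of the commutative diagram following Lemma~\ref{raising-eta}); your approach is more self-contained and makes explicit the mechanism behind the cancellation, namely that $A$ is an eigenfunction of $\Delta_z$ and that $|\partial A|^2$ is proportional to $R$. Both verifications of the key identities are routine once one writes $A=\tfrac{\sqrt{N}}{y}(x_3 z\bar z - x_1(z+\bar z)-x_2)$, and I checked them; your bookkeeping with $dd^c=\tfrac{1}{2\pi i}\bar\partial\partial$, $d\bar z\wedge dz=2i\,dx\wedge dy$, and the passage from $e(Q(X)\tau)e^{-\pi v A^2}$ to $\sqrt{v}^{-1}\varphi_0$ is also correct.
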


\begin{proof}

Using \eqref{xi1} we compute
\begin{align*}
dd^c \eta(X,\tau,z) & = \pi \left( \int_v^{\infty} dd^c
E_1(2\pi R(X,z)t ) e^{2\pi (X,X)t} \frac{dt}{\sqrt{t}}
\right)e^{\pi i (X,X){\tau}} \\
& = \pi \left( \int_v^{\infty}  t^{-3/2}\varphi_{1}(X,u+it,z)
e^{-\pi i(X,X)(u+it)}  \frac{dt}{\sqrt{t}}
\right)e^{\pi i(X,X){\tau}} d\mu(z)\\
& = - \left( \int_v^{\infty} t^{-2}\left(L_{\frac12}
\varphi_0(X,u+it,z) \right)e^{-\pi i(X,X)(u+it)}dt
\right)e^{\pi i(X,X){\tau}} d\mu(z)\\
& = - \left( \int_v^{\infty} \frac{\partial}{\partial t}
\left[ \sqrt{t} e^{-\pi (X,X(z))^2 t} \right] dt \right) e^{\pi i(X,X){\tau}}d\mu(z) \\
& = \varphi_0(X,\tau,z) d\mu(z).
\end{align*}
Here we used $\varphi_0(X,u+it,z) =  \sqrt{t} e^{-\pi (X,X(z))^2 t} e^{\pi i(X,X)(u+it)}$.
\end{proof}

The relationship to Kudla's Green function is given by

\begin{lemma}\label{raising-eta}
Outside the singularity $D_X$, we have
\[
L_{\frac12} \eta(X,\tau,z) = - \pi \xi(X,\tau,z).
\]
\end{lemma}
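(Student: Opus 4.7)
The plan is a direct computation from the integral definitions of $\eta$ and $\xi$, exploiting the fact that the integrand of $\eta$ evaluated at its lower limit of integration $t = v$ reproduces (up to constants) the integrand defining $\xi$. This is essentially the reason for the particular choice of the $e^{2\pi(X,X)t}$ factor in the definition of $\eta$.

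More precisely, I would rewrite $\eta$ in the separated form
\[
\eta(X,\tau,z) = \pi F(v)\, e^{\pi i (X,X)\tau}, \qquad F(v) = \int_v^\infty E_1(2\pi R(X,z) t)\, e^{2\pi (X,X) t}\, t^{-1/2}\, dt,
\]
using $(X,X)=2Q(X)$ to absorb the $\tau$-dependence into an explicit holomorphic exponential. Since $v = (\tau - \bar\tau)/(2i)$, the only dependence on $\bar\tau$ is through the lower limit $v$, so $\partial v/\partial\bar\tau = i/2$ and the fundamental theorem of calculus gives
\[
\frac{\partial \eta}{\partial \bar\tau} = -\frac{\pi i}{2}\, v^{-1/2}\, E_1(2\pi R(X,z) v)\, e^{2\pi (X,X) v}\, e^{\pi i (X,X)\tau}.
\]

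The final step is to combine the two exponentials via $e^{2\pi (X,X) v}\, e^{\pi i (X,X)\tau} = e^{\pi i (X,X)\bar\tau} = e(Q(X)\bar\tau)$, and then apply $L_{1/2} = -2iv^2 \partial/\partial\bar\tau$. This yields
\[
L_{1/2}\eta = -\pi v^{3/2}\, E_1(2\pi v R(X,z))\, e(Q(X)\bar\tau) = -\pi \xi(X,\tau,z),
\]
as claimed. The hypothesis $z\neq D_X$, i.e.\ $R(X,z)>0$, enters only to ensure that $E_1(2\pi R(X,z) v)$ is finite and that differentiation under the integral sign is justified; outside this locus there is no real obstacle, and the identity is a one-line consequence of the construction of $\eta$.
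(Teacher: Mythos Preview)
Your proof is correct and follows essentially the same approach as the paper: both observe that the only $\bar\tau$-dependence of $\eta$ is through the lower limit $v$ of the integral, apply the fundamental theorem of calculus, and identify the result with $-\pi\xi$. You in fact spell out the exponential identity $e^{2\pi(X,X)v}\,e(Q(X)\tau)=e(Q(X)\bar\tau)$ that the paper's last line uses implicitly.
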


\begin{proof}
We compute
\begin{align*}
 L_{\frac12} \eta(X,\tau,z)
& = -2\pi i v^2\frac{\partial}{\partial \bar{\tau}} \left( \int_v^{\infty} E_1(2\pi
R(X,z)t ) e^{2\pi (X,X)t}
\frac{dt}{\sqrt{t}} \right)e(Q(X){\tau}) \\
& \quad = \pi v^2\left( \frac{\partial}{\partial v}
\int_v^{\infty} E_1(2\pi R(X,z)t ) e^{2\pi (X,X)t}
\frac{dt}{\sqrt{t}}
\right)e(Q(X){\tau}) \\
& \quad = -\pi \xi(X,\tau,z),
\end{align*}
as claimed.
\end{proof}

To summarize we have obtained the following diagram
\begin{align}
\xymatrix{
 \eta(X,\tau,z) \;\ar @{|->}[r]^{-\tfrac1{\pi}L_{1/2}} \ar @{|->}[d]^{dd^c} & \xi(X,\tau,z)  \ar @{|->}[d]^{dd^c}\\
\varphi_{0}(X,\tau,z) d\mu(z) \;\ar @{|->}[r]^{-\tfrac1{\pi}L_{1/2}} &  \varphi_1(X,\tau,z)d\mu(z).}
\end{align}

\subsection{Current equations}\label{current-equations}

We now consider $\eta$ as a current. The current equations we obtain for $\eta$ can be viewed as a refinement of Kudla's current equation for $\xi$, see \cite{KAnn}, Proposition~11.1. Namely, for $X\ne 0$ we have
\begin{equation}\label{Kudla-current}
dd^c[\xi(X,\tau)] + v^{3/2}e(m\bar{\tau})\delta_{D_X}= [\varphi_{1}(X,\tau)d\mu(\tau)],
\end{equation}
as currents acting on functions with compact support on $D$. Here $D_X = \emptyset$ if $Q(X) \geq 0$. We recover \eqref{Kudla-current} by applying the lowering operator $L_{1/2}$ to the current equations for $\eta$ below.

\medskip

We first note that by Proposition~\ref{new-xi-prop} for a $C^2$-function $f$ on $D$ we have
\begin{equation} \label{P-L-eq}
2\pi i f(z)  \varphi_0(X,z) d\mu(z) =  d \left( f(z) \partial \eta(X,z) \right) -  d \left( \bar{\partial} f(z) \eta(X,z)  \right)+ \partial \bar{\partial} f(z) \eta(X,z),
\end{equation}
away from the singularities of $\eta$.

\subsubsection{The elliptic case}

Throughout this subsection we assume that $X \in V$ is a vector of length $Q(X) =m<0$. Then the stabilizer $\bar{\G}_X$ of $X$ is finite.

\begin{proposition}\label{big-current-elliptic}
The function $\eta(X,\tau,z)$ satisfies the following current
equation:
\[
dd^c[\eta(X,\tau)]  + \pi  \frac{\erfc(2\sqrt{\pi|m|v})}
{2\sqrt{|m|}} e(m{\tau}) \delta_{D_X} = [\varphi_0(X,\tau)]
\]
as currents on $C^2$-functions  on $D$ with at most "linear
exponential" growth. That is, for such $f$ we have
\begin{align*}
\int_D  f(z) \varphi_0(X,\tau, z) d\mu(z) &
= \frac{\pi  e^{2 \pi m{\tau}}}{2\sqrt{|m|}} \erfc(2\sqrt{\pi |m|v})f(D_X) \\
&\quad -\frac1{4\pi}  \int_D \left(\Delta f(z)\right)
\eta(X,\tau,z) d\mu(z).
\end{align*}
\end{proposition}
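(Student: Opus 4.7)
The plan is to prove the integrated form (second display), which is equivalent to the current equation by duality. Writing $c := \pi\frac{\erfc(2\sqrt{\pi|m|v})}{2\sqrt{|m|}}\, e(m\tau)$, the computation in the proof of Lemma~\ref{eta-sing}(i), evaluating the Gaussian integral $\int_v^\infty e^{-4\pi|m|t}t^{-1/2}dt$, yields the local decomposition
\[
\eta(X,z) = -c \log|z-D_X|^2 + \tilde\eta(X,z),
\]
with $\tilde\eta$ smooth in a neighborhood of $D_X$.

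The argument is then a standard Green-function / Poincar\'e--Lelong calculation. First, excise a small Euclidean disk $B_\epsilon(D_X) \subset D$ and apply the pointwise identity $d(f\, d^c\eta - \eta\, d^c f) = f\,dd^c\eta - \eta\,dd^c f$, valid on $D\setminus B_\epsilon$ where $\eta$ is smooth. Substituting $dd^c\eta = \varphi_0\,d\mu$ from Proposition~\ref{new-xi-prop} and $dd^c f = -\tfrac{1}{4\pi}\Delta f\,d\mu$, Stokes' theorem on an exhaustion $D_R \setminus B_\epsilon$ of $D\setminus B_\epsilon$ yields
\begin{align*}
\int_{D_R\setminus B_\epsilon} \!\! f\,\varphi_0\,d\mu + \frac{1}{4\pi}\int_{D_R\setminus B_\epsilon} \!\!\eta\,\Delta f\,d\mu = \int_{\partial D_R}(f\,d^c\eta - \eta\,d^c f) - \int_{\partial B_\epsilon}(f\,d^c\eta - \eta\,d^c f).
\end{align*}

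I would then let $R \to \infty$ and afterwards $\epsilon \to 0$. The boundary at infinity vanishes thanks to Lemma~\ref{eta-sing}(i): $\eta$, $\partial\eta$, $\bar\partial\eta$ are square-exponentially decreasing in $(x,y)$, which dominates the at most linear exponential growth of $f$ and $df$. The two integrals on the left converge absolutely to their counterparts on all of $D$, since the logarithmic singularity of $\eta$ at $D_X$ is locally integrable.

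The key step is the $\epsilon \to 0$ limit of the inner boundary term. Splitting $\eta = -c\log|z-D_X|^2 + \tilde\eta$, the smooth piece contributes $O(\epsilon)$. For the singular piece, parametrize $z - D_X = \epsilon e^{i\theta}$; a direct computation gives $d^c \log|z-D_X|^2 = \tfrac{d\theta}{2\pi}$, hence $\int_{\partial B_\epsilon} f\, d^c\log|z-D_X|^2 \to f(D_X)$, while $\int_{\partial B_\epsilon} \log|z-D_X|^2\, d^c f = 2\log\epsilon\cdot\int_{B_\epsilon} dd^c f = O(\epsilon^2\log\epsilon) \to 0$. The inner boundary limit therefore equals $-c\,f(D_X)$, which when inserted in the displayed identity gives precisely the stated integral formula, and hence the current equation. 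The main obstacle is organizational---keeping track of signs and orientations in the Poincar\'e--Lelong computation and justifying the vanishing of the boundary at infinity on the non-compact domain $D$, for which the square-exponential decay in Lemma~\ref{eta-sing}(i) is the decisive input.
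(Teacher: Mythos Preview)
Your proof is correct and follows essentially the same approach as the paper's: both arguments are Poincar\'e--Lelong computations using Stokes' theorem on a punctured domain, the logarithmic singularity of $\eta$ at $D_X$ from Lemma~\ref{eta-sing}(i), and the square-exponential decay of $\eta$ and its derivatives to kill the boundary term at infinity against the merely linear-exponential growth of $f$. The paper's own proof is only a two-sentence sketch invoking \eqref{P-L-eq} and the Poincar\'e--Lelong lemma, so your write-up is in fact more detailed (though you might want to double-check the sign bookkeeping in the ``inner boundary limit'' sentence, where the overall minus from the orientation of $\partial B_\epsilon$ as an inner boundary should turn $-c\,f(D_X)$ into $+c\,f(D_X)$ on the right-hand side).
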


\begin{proof}
For functions with compact support this can be easily seen using \eqref{P-L-eq}, Stokes' theorem,
 and the logarithmic singularity of $\eta$. In fact, it is very special case of the Poincar\'e-Lelong Lemma, see e.g.\cite{SABK} p.41/42. For functions with at most linear exponential growth the same argument goes through since $\eta(X)$ and its derivatives are square exponentially decreasing.
\end{proof}

This holds in particular for $f$ a weak Maass form of weight $0$.

\begin{corollary}\label{big-current-elliptic1}
Let $f \in H_0^+(\G)$. Then
\[
\int_M f(z) \sum_{ \g \in \G_X \back \G} \varphi_0(X,\tau,\g z) d\mu(z)
\]
converges, and we have
\begin{align*}
\int_M f(z) \sum_{ \g \in \G_X \back \G} \varphi_0(X,\tau,\g z) d\mu(z) &= \frac1{|\bar{\G}_X|}
\int_D  f(z) \varphi_0(X,\tau, z) d\mu(z) \\& =  \frac{\pi  e^{2 \pi m{\tau}}}{2\sqrt{|m|}} \erfc(2\sqrt{\pi |m|v})\frac1{|\bar{\G}_X|} f(D_X).
\end{align*}
\end{corollary}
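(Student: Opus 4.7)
The plan is to derive this corollary as a direct consequence of Proposition~\ref{big-current-elliptic} via the standard unfolding trick, with the key input being the harmonicity of $f$. The only genuine issue is justifying absolute convergence, after which the identities follow mechanically.

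First I would verify convergence. Since $V$ has signature $(2,1)$ and every isotropic line $\ell$ satisfies $Q|_{\ell^\perp}\ge 0$ with kernel $\ell$, no vector in the $\Gamma$-orbit of $X$ can lie in any $\ell^\perp$, because $Q(X)=m<0$. Consequently, in the cusp expansion of $\Theta_L(\tau,\sigma_\ell z,\varphi_0)$ provided by Proposition~\ref{theta-decay}, the terms coming from the partial orbit sum $\sum_{\gamma\in\G_X\bs\G}\varphi_0(X,\tau,\gamma z)$ contribute only to the square-exponentially decreasing remainder. Combined with the fact that a harmonic weak Maass form $f\in H_0^+(\Gamma)$ has at most linear exponential growth at every cusp, Fubini applies and all manipulations below are justified.

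Next I would unfold. Using $\Gamma$-invariance of $f\,d\mu$ and the fact that $\varphi_0$ transforms by the $G(\R)$-action,
\begin{equation*}
\int_M f(z)\sum_{\gamma\in\G_X\bs\G}\varphi_0(X,\tau,\gamma z)\,d\mu(z)
=\int_{\G_X\bs D} f(z)\,\varphi_0(X,\tau,z)\,d\mu(z).
\end{equation*}
Since $\bar\Gamma_X$ is finite and acts on $D$ with the single fixed point $D_X$, and since $f(z)\varphi_0(X,\tau,z)$ is $\bar\Gamma_X$-invariant, the standard measure-theoretic identity for quotients by a finite group gives
\begin{equation*}
\int_{\G_X\bs D} f(z)\,\varphi_0(X,\tau,z)\,d\mu(z)
=\frac{1}{|\bar\G_X|}\int_D f(z)\,\varphi_0(X,\tau,z)\,d\mu(z),
\end{equation*}
which is the first asserted equality.

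Finally I would apply Proposition~\ref{big-current-elliptic}. Because $f\in H_0^+(\Gamma)$ is by definition harmonic of weight $0$, we have $\Delta f=0$, so the proposition yields
\begin{equation*}
\int_D f(z)\,\varphi_0(X,\tau,z)\,d\mu(z)
=\frac{\pi\, e^{2\pi m\tau}}{2\sqrt{|m|}}\erfc(2\sqrt{\pi|m|v})\,f(D_X),
\end{equation*}
with the Laplacian contribution dropping out by harmonicity. Dividing by $|\bar\Gamma_X|$ produces the second equality. The main (and only real) obstacle is the convergence step, but as observed above, the combination of the sign of $Q(X)$ with Proposition~\ref{theta-decay} is enough to reduce it to the linear-exponential growth hypothesis already built into the definition of a weak Maass form.
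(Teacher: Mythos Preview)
Your proof is correct and follows the same route as the paper: unfold, then apply Proposition~\ref{big-current-elliptic} together with $\Delta f=0$. The only minor difference is that you argue convergence by appealing to Proposition~\ref{theta-decay} and the observation that no negative-length vector lies in $\ell^\perp$, whereas the paper has already built the convergence of $\int_D f(z)\varphi_0(X,\tau,z)\,d\mu(z)$ for functions of at most linear exponential growth directly into the statement of Proposition~\ref{big-current-elliptic} (via the square-exponential decay of $\eta$ and its derivatives established in Lemma~\ref{eta-sing}(i)); so your detour through the theta-kernel asymptotics is unnecessary, though not wrong.
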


\begin{proof}
This is immediate from Proposition~\ref{big-current-elliptic} and the linear exponential growth of weak Maass forms.
\end{proof}

\subsubsection{The non-split hyperbolic case}

Throughout this subsection we assume that $X \in V$ is a vector of positive length $Q(X) =m>0$. In addition, we assume that the stabilizer $\bar{\G}_X$ is infinitely cyclic.

\begin{proposition}\label{big-current-hyperbolic}

For $X$ as above, the function $\eta(X,\tau,z)$ satisfies the following current
equation:
\[
dd^c[\eta(X,\tau)]  +  \frac12 e(m{\tau})
\delta_{c(X),dz_X} = [\varphi_0(X,\tau)].
\]
as currents on $C^2$-functions on $\G_X \back D$ with at most linear
exponential growth. That is, for such $f$ we have
\begin{align*}
\int_{\G_X \back D}  f(z) \varphi_0(X,\tau, z) d\mu(z)  &=
\frac12 e(m{\tau}) \int_{c(X)} f(z) dz_X
\\ & \quad
 -\frac1{4\pi}   \int_{\G_X \back D} \left(\Delta f(z)\right)
\eta(X,\tau,z) d\mu(z).
\end{align*}
\end{proposition}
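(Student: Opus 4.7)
The plan is to apply the integration-by-parts identity \eqref{P-L-eq} on a truncated region $\Omega_\epsilon = (\Gamma_X \backslash D) \setminus T_\epsilon$, where $T_\epsilon = \{z \in \Gamma_X\backslash D : |(X,X(z))| < \epsilon\}$ is a tubular $\epsilon$-neighborhood of $c(X)$, and then to pass to the limit $\epsilon \to 0$. By Lemma \ref{eta-sing}(ii), $\eta(X,z)$ is smooth on $\Omega_\epsilon$, so Stokes' theorem is applicable to the exact differentials $d(f\partial\eta)$ and $d(\bar\partial f \cdot \eta)$ appearing in \eqref{P-L-eq}; it produces boundary contributions both at $\partial T_\epsilon$ and at the boundary of $\Gamma_X\backslash D$ at infinity.

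First, I would show that the boundary-at-infinity contributions vanish. Since $\bar\Gamma_X$ is infinitely cyclic, a fundamental domain for $\Gamma_X \backslash D$ is topologically an annular region whose two ends approach the two rational endpoints of $c_X$ on $\partial D$. Lemma \ref{eta-sing}(ii) guarantees that $\eta$ together with $\partial\eta$ and $\bar\partial\eta$ is square-exponentially decreasing there, which, combined with the at-most-linear-exponential growth of $f$ and its derivatives, forces these boundary terms to vanish in the limit.

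Next, the delta-current contribution comes from the jump of $\partial\eta$ across $c_X$. Since $\eta$ itself is continuous across $c_X$, the piece of the boundary integral involving $d(\bar\partial f \cdot \eta)$ contributes nothing in the limit $\epsilon \to 0$: the two sides of $\partial T_\epsilon$ approach $c_X$ with opposite induced orientations while the integrand is continuous at $c_X$. By contrast, $\partial\eta$ has a genuine jump discontinuity: from the explicit formula in Lemma \ref{eta-sing}(ii), as $(X,X(z)) \to 0$ the factor $\erfc(\sqrt{\pi v}|(X,X(z))|)$ tends to $1$ while $\sgn(X,X(z))$ jumps by $2$, so the jump of $\partial\eta$ across $c_X$ equals $\pi i\, e(m\tau)\, dz_X$. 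After tracking the induced boundary orientations of $\Omega_\epsilon$, each of the two sides of the tube contributes $\tfrac{1}{2}\pi i\, e(m\tau)\int_{c(X)} f\, dz_X$ to the boundary integral, for a total of $\pi i\, e(m\tau)\int_{c(X)} f\, dz_X$. Dividing by $2\pi i$ as in \eqref{P-L-eq}, together with the bulk contribution $\partial\bar\partial f \cdot \eta = \tfrac{i}{2}(\Delta f)\eta\, d\mu$ similarly divided by $2\pi i$, produces precisely the right-hand side of the claimed formula.

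The main obstacle is the careful bookkeeping of orientations near $c_X$ combined with the verification that the tube boundary integrals converge, in the limit $\epsilon \to 0$, to well-defined integrals along $c(X)$. Since $\bar\Gamma_X$ is infinitely cyclic, $c(X)$ is a compact closed geodesic in $\Gamma_X \backslash D$, so the integral $\int_{c(X)} f\, dz_X$ of the continuous function $f$ against the smooth $1$-form $dz_X$ is manifestly convergent; this is where the hypothesis on $\bar\Gamma_X$ enters the proof in an essential way, and it also explains why the square-discriminant case (with $\bar\Gamma_X$ trivial, so $c(X)$ an infinite geodesic) will require separate, regularized treatment.
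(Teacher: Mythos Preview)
Your proof is correct and follows essentially the same route as the paper: excise an $\epsilon$-tube $T_\epsilon$ around $c_X$, apply Stokes via \eqref{P-L-eq}, let the boundary-at-infinity terms vanish by the square-exponential decay in Lemma~\ref{eta-sing}(ii), use continuity of $\eta$ to kill the $\bar\partial f\cdot\eta$ boundary piece, and read off the cycle integral from the jump of $\partial\eta$ across $c_X$.

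One geometric inaccuracy worth correcting: in the non-split hyperbolic case the endpoints of $c_X$ on $\partial D$ are real quadratic \emph{irrationalities}, not rational---rationality of the endpoints is exactly what characterizes the split case with $\bar\Gamma_X$ trivial. Moreover, the two ends of the tube $\Gamma_X\backslash D$ do not approach the endpoints of $c_X$ at all. In the model $X=\sqrt{m/N}\kzxz{1}{}{}{-1}$ with $\bar\Gamma_X=\langle\kzxz{r}{}{}{r^{-1}}\rangle$, the endpoints of $c_X$ are $0$ and $\infty$, whereas the fundamental annulus $\{1\le|z|<r\}$ meets $\partial D$ along the real segments $\pm[1,r]$; the quotient is a hyperbolic cylinder whose two funnel ends correspond to $\arg z\to 0^+$ and $\arg z\to\pi^-$, with $c(X)$ the closed core geodesic. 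This misdescription does not affect your argument, since the only fact you actually use about these ends is the decay from Lemma~\ref{eta-sing}(ii), which you invoke correctly.
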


\begin{proof}
We can assume that $X = \sqrt{m/N} \left(
\begin{smallmatrix} 1&\\&-1 \end{smallmatrix} \right)$ so that $c_X = \{ z \in D; \, (X,X(z))= - \tfrac{x}{y}=0 \}$ is the imaginary axis,
$\bar{\G}_X=  \left\langle \left( \begin{smallmatrix} r&0\\0&r^{-1} \end{smallmatrix} \right)
\right\rangle $, and $\G_X \back c_X$ inside the annulus $\G_X\back D = \{ z \in D; \, 1 \leq |z| \leq r \}$ is given by $\{z =iy; \, 1 \leq y \leq r\}$. 
We define an $\eps$-neighborhood for $c_X$ in $\G_X \back D$ by $U_{\eps}(c_X) = \{ z \in \G_X \back D; |(X,X(z)| = \tfrac{|x|}{y}< \eps \}$.  We have
\begin{align*}
\int_{\G_X \back D}  f(z) \varphi_0(X,\tau, z) d\mu(z) &= \lim_{\eps \to 0} \int_{\G_X \back D-U_{\eps}(c_X)}  f(z) \varphi_0(X,\tau, z) d\mu(z).
\end{align*}
Using \eqref{P-L-eq} we obtain for fixed $\eps$
\begin{align*}
 -\frac{1}{2\pi i} \int_{\partial U_{\eps}(c_X)} \hskip-.3cm f(z) \partial \eta(X,z) +  \frac{1}{2\pi i} \int_{\partial U_{\eps}(c_X)} \hskip-.3cm  \bar{\partial} f(z) \eta(X,z) +
 \frac{1}{2\pi i} \int_{\G_X \back D-U_{\eps}(c_X)} \hskip-.3cm   \partial \bar{\partial} f(z) \eta(X,z).
\end{align*}
(Note $\partial U_{\eps}(c_X) = - \partial (\G_X \back D-U_{\eps}(c_X))$). Here we also used the very rapid decay of $\varphi_0$, $\eta$, and $\partial \eta$ at the boundary of the tube $\G_X \back D$. As $\eps \to 0$ the last term becomes $\int_{\G_X \back D} \left(dd^c f(z)\right) \eta(X,\tau,z) d\mu(z)$. The second term vanishes since $\eta$ is continuous. For the first term, we define $c_{X, \pm \eps} = \{ z \in \G_X \back D; \, - (X,X(z)= \frac{x}{y} = \pm \eps \}$.
Then by \eqref{del-eta-formula} we obtain
\[
 \int_{\partial U_{\eps}(c_X)} \hskip-.3cm f(z) \partial \eta(X,z) = \left[\int_{ c_{X,\eps} } f(z) dz_X + \int_{c_{X,-\eps} } f(z) dz_X \right]\erfc\left(\sqrt{\pi m v} \eps \right) e(m{\tau}).
\]
Taking the limit completes the proof.
\end{proof}

This holds in particular for $f$ a weak Maass form of weight $0$.

\begin{corollary}\label{big-current-hyperbolic1}
Let $f \in H_0^+(\G)$. Then
\[
\int_M f(z) \sum_{ \g \in \G_X \back \G} \varphi_0(X,\tau,\g z) d\mu(z)
\]
converges, and we have
\begin{align*}
\int_M f(z) \sum_{ \g \in \G_X \back \G} \varphi_0(X,\tau,\g z) d\mu(z) &=
\int_{\G_X \back D}  f(z) \varphi_0(X,\tau, z) d\mu(z) \\
&=  \frac12 e(m{\tau}) \int_{c(X)} f(z) dz_X.
\end{align*}
\end{corollary}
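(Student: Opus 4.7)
My plan is to deduce this corollary directly from Proposition~\ref{big-current-hyperbolic} via a standard unfolding argument. First I would justify the unfolding: since $\varphi_0(X,\tau,z)$ is a Schwartz function in $X$ with majorant Gaussian weight $e^{-2\pi v R(X,z)}$ and $R(X,\gamma z)$ is bounded below by a positive multiple of the hyperbolic distance squared of $\gamma z$ from the geodesic $c_X$, the sum $\sum_{\gamma \in \G_X \back \G} \varphi_0(X,\tau,\gamma z)$ converges absolutely and uniformly on compact subsets of $D$. Combined with the bound that $f$ has at most linear exponential growth at the cusps, this will allow me to interchange summation and integration to obtain
\[
\int_M f(z) \sum_{\gamma \in \G_X \back \G} \varphi_0(X,\tau,\gamma z)\, d\mu(z) = \int_{\G_X \back D} f(z)\, \varphi_0(X,\tau,z)\, d\mu(z),
\]
with absolute convergence, using that a fundamental domain for $\G_X \back D$ is (tiled by) $\G$-translates of a fundamental domain of $\G$.

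Next I would apply Proposition~\ref{big-current-hyperbolic} to the unfolded integral on the right. Since $f \in H_0^+(\G)$ is a harmonic weak Maass form of weight $0$, we have $\Delta f = 0$, so the second term in the statement of that proposition vanishes, leaving exactly
\[
\int_{\G_X \back D} f(z)\, \varphi_0(X,\tau,z)\, d\mu(z) = \frac12 e(m\tau) \int_{c(X)} f(z)\, dz_X.
\]
To legally invoke Proposition~\ref{big-current-hyperbolic} I need $f$ to be a $C^2$-function on $\G_X \back D$ of at most linear exponential growth, both of which hold for harmonic weak Maass forms (with the growth estimate applying in particular along the boundary components of the tube $\G_X \back D$, which are the cusps of the geodesic $c_X$).

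The main technical obstacle is verifying the convergence/absolute-convergence statement that underpins the unfolding. The subtlety is that $\G_X \back D$ is non-compact in \emph{two} qualitatively different ways: along the tube direction where $\eta$ and $\varphi_0$ decay square-exponentially (as noted in Lemma~\ref{eta-sing}(ii)), and along the geodesic $c(X)$ itself where the decay in $z$ is absent but $f$ can grow exponentially at the cusps of $\G$ which the geodesic may approach. The cycle integral $\int_{c(X)} f(z)\, dz_X$ on the right-hand side is already known to converge because $\bar{\G}_X$ is infinitely cyclic and $c(X)$ is therefore a closed geodesic inside $M$, keeping it at bounded distance from the cusps. This same compactness of $c(X)$ in $M$ controls the integrand away from the tube boundary, while the square-exponential decay of $\varphi_0$ in the transverse direction handles the rest. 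Once convergence is in hand, Proposition~\ref{big-current-hyperbolic} delivers the identity with no further computation.
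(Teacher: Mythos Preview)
Your proposal is correct and follows essentially the same approach as the paper: the paper's proof is the one-line remark that the corollary ``is immediate from Proposition~\ref{big-current-hyperbolic} and the linear exponential growth of weak Maass forms,'' and you have simply unpacked this by spelling out the unfolding step and observing that $\Delta f=0$ kills the $\eta$-term. Your discussion of why convergence holds (square-exponential transverse decay of $\varphi_0$ versus the fact that the closed geodesic $c(X)$ stays at bounded distance from the cusps of $M$) is more explicit than the paper but is exactly the content behind the phrase ``linear exponential growth.''
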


\begin{proof}
This is immediate from Proposition~\ref{big-current-hyperbolic} and the linear exponential growth of weak Maass forms.
\end{proof}

\begin{remark}[The split hyperbolic case]
Assume that $X \in V$ is a vector of positive length $Q(X) =m>0$ such that the stabilizer $\bar{\G}_X$ is trivial. Hence $\G_X \back D = D$. Then Proposition~\ref{big-current-hyperbolic} carries to the present situation over if one assumes that $f$ is a function of compact support on $D$. However, for a function $f$ not of sufficient decay,
\[
\int_M f(z) \sum_{ \g \in \bar\G} \varphi_0(X,\tau,\g z) d\mu(z)
\]
does {\it not} converge. In fact, exactly these terms require the theta lift to be regularized.
\end{remark}

\section{The Fourier expansion of the regularized theta lift}\label{sec:Fourier}

In this section, we give the proofs for the results stated in Section~\ref{sec:Main-results}. We set
\begin{equation}\label{Fourier1}
\theta_{m,h}(\tau,z) = \sum_{X \in {L}_{m,h}} \varphi_0(X,\tau,z),
\end{equation}
which defines a $\G$-invariant function on $D$. We then have
\begin{align}\label{Fourierexp}
I_{h}(\tau,f) = \sum_{m\in\Q} \int_{M}^{reg}  f(z) \theta_{m,h}(\tau,z)
d\mu(z),
\end{align}
which is the Fourier expansion of $I_{h}(\tau,f)$. (Since picking out
the $m$-th Fourier coefficient is achieved by integrating over a
circle, we can interchange the regularized integral with the `Fourier
integral'). More precisely, let $f \in H_0^+(\G)$ be a harmonic weak
Maass form for $\G$ with constant terms $a^+_{\ell}(0)$ at the cusp
$\ell$. Then by Proposition~\ref{prop:reg2} the $m$-th Fourier
coefficient of the regularized lift is given by
\begin{multline}\label{reg-int-m}
\int^{reg}_{M} f(z) \theta_{m,h}(\tau,z) d\mu(z) \\= \lim_{T \to \infty} \left[ \int_{M_T} f(z)  \theta_{m,h}(\tau,z)
d\mu(z) - \frac{\log(T)}{\sqrt{N}}\sum_{\ell \in \G \back \Iso(V)} a^+_{\ell}(0) \eps_{\ell} b_{\ell}(m,h) \right].
\end{multline}
Here $M_T$ is the truncated surface $M_T$ defined in \eqref{truncated} and $b_{\ell}(m,h)$ is the $(m,h)$-Fourier coefficient of the unary theta series $\tilde \Theta_{K_{\ell}}(\tau)$ as before. For $m \ne 0$, the set $\G \back {L}_{m,h}$ is finite. Therefore, for these $m$, we see
\begin{multline}\label{reg-int-m2}
\int^{reg}_{M} f(z) \theta_{m,h}(\tau,z) d\mu(z)  \\= \lim_{T \to \infty} \left[ \sum_{X\in \G \backslash {L}_{m,h} } \hskip-.15cm\int_{M_T}  \hskip-.15cm f(z) \hskip-.15cm \sum_{\g \in \G_X \backslash \G}  \varphi_0(X,\tau,\g z) d\mu(z) -  \frac{\log(T)}{\sqrt{N}} \hskip-.15cm\sum_{\ell \in \G \back \Iso(V)} a^+_{\ell}(0) \eps_{\ell} b_{\ell}(m,h)\right].
\end{multline}

For non-zero $m$ in the elliptic and the split hyperbolic situation we have seen in Section~\ref{current-equations} that
\[
\int_{M} \sum_{\g \in \G_X \backslash \G} f(z) \varphi_0(X,\tau,\g z) d\mu(z)
\]
actually converges, corresponding to the fact that  $b_{\ell}(m,h)=0$. Then the current equations in Section~\ref{current-equations}, Corollaries~
\ref{big-current-elliptic1} and \ref{big-current-hyperbolic1}, give the Fourier coefficients for Theorem~\ref{th:Main-Maass} for those $m$. We will consider the split hyperbolic periods below, as well as the $0$-th coefficient.

\subsection{The split hyperbolic Fourier coefficients}

We now consider the case $m/N$ is a square, when the associated cycles are infinite geodesics. Throughout $X \in V$ denotes a vector of length $Q(X) = m$ with $m/N$ is a square and $f \in H_0^+(\G)$ is a  harmonic weak Maass form.

In view of the characterization of the regularized integral in \eqref{reg-int-m2}, we need to consider the behavior of $\int_{M_T} \sum_{\g \in \bar\G} f(z) \varphi_0(X,\tau,\g z) d\mu(z)$ as $T \to \infty$. For this we have

\begin{proposition}\label{key-split-hyperbolic}
The asymptotic behavior of
\[
\int_{M_T} \sum_{\g \in \bar\G} f(z) \varphi_0(X,\tau,\g z) d\mu(z)
\]
as $T \to \infty$ is given by
\begin{align*}
& \frac{1}{2} \left( \int^{reg}_{c_X} f(z) dz_X \right)e(m\tau)\\
 &;-  \frac{1}{2\sqrt{m}} \Biggl[ \left( \log 2 \sqrt{\pi v m} + \tfrac12 \log2 + \tfrac14 \g- \sqrt{\pi} \int_0^{2\sqrt{\pi vm}} e^{w^2} \erfc(w)dw \right) (a^+_{\ell_X}(0)+ a^+_{\ell_{-X}}(0))  \\
& +2\pi \left( \int_0^{\sqrt{v}} e^{4\pi m w^2} dw \right) \left( \sum_{n<0} a^+_{\ell_X}(n)  e^{2\pi i Re(c(X))n } + a^+_{\ell_{-X}}(n)  e^{2\pi i \re(c(-X))n } \right) \\
& + (a^+_{\ell_X}(0)+ a^+_{\ell_{-X}}(0))\log(T) \Biggr] e(m\tau).
\end{align*}
\end{proposition}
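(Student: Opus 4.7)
Since $\bar\Gamma_X=1$, I would first unfold the sum to get
$$\int_{M_T}\sum_{\g\in\bar\Gamma} f(z)\,\varphi_0(X,\tau,\g z)\,d\mu(z) = \int_{D^*_T} f(z)\,\varphi_0(X,\tau,z)\,d\mu(z),$$
where $D^*_T\subset D$ maps isomorphically onto $M_T$. This domain meets $c_X$ in a finite segment whose endpoints lie on the horocycles bounding the small truncated neighborhoods of $\ell_X$ and $\ell_{-X}$. By Lemma~\ref{eta-sing} and Proposition~\ref{theta-decay}, the functions $\varphi_0(X,\tau,\cdot)$ and $\eta(X,\tau,\cdot)$ are square-exponentially decreasing at any cusp $\ell$ with $\ell\not\perp X$, so only the horocycles at $\ell_{\pm X}$ will contribute to the boundary integrals arising below.

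Next, I would apply identity \eqref{P-L-eq} together with Stokes' theorem exactly as in the proof of Proposition~\ref{big-current-hyperbolic}, handling the jump discontinuity of $\partial\eta$ across $c_X$. This yields
\begin{align*}
\int_{D^*_T} f(z)\,\varphi_0(X,\tau,z)\,d\mu(z) &= \tfrac{1}{2}\, e(m\tau) \int_{c_X\cap D^*_T} f(z)\,dz_X \\
&\qquad + B_T(\ell_X,f) + B_T(\ell_{-X},f),
\end{align*}
where $B_T(\ell_{\pm X},f)$ denotes the horocyclic integral at height $T$ in $\sigma_{\ell_{\pm X}}$-coordinates, involving both $f\,\partial\eta$ and $\bar\partial f \cdot \eta$. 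By Theorem~\ref{reg-def2}, the truncated cycle integral $\int_{c_X\cap D^*_T} f\,dz_X$ equals $\int^{reg}_{c_X} f\,dz_X$ plus explicit correction terms of size $O(\log T)$ coming from the digamma evaluations at $iT/\alpha_{\pm X}$.

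The remaining task is to compute $B_T(\ell_{\pm X},f)$. I would conjugate by $\sigma_{\ell_{\pm X}}$, apply \eqref{del-eta-formula} and Lemma~\ref{eta-sing} to write $\partial\eta$ and $\eta$ explicitly in terms of $\erfc$, and substitute the Fourier expansion $f=f^++f^-$ at the cusp. After the change of variables $w=\sqrt v\,|x-r_\pm|/y$, the resulting terms split by Fourier index: the positive coefficients $a^+_{\ell_{\pm X}}(n)$ ($n>0$) together with the $a^-_{\ell_{\pm X}}(n)$ produce the exponential integrals $\calE\calI(2\pi n c_\pm)$ and the remainder integrals appearing in Definition~\ref{reg-def1}; the negative holomorphic coefficients $a^+_{\ell_{\pm X}}(n)$ ($n<0$) interact with $\erfc$ to yield, via Gaussian-type integrals, the factor $2\pi\int_0^{\sqrt v} e^{4\pi m w^2}\,dw$ multiplying the complementary-trace terms; and the constant coefficient $a^+_{\ell_{\pm X}}(0)$ produces both the $\log T$-divergence and the finite combination $\log(2\sqrt{\pi mv})+\tfrac12\log 2+\tfrac14\g-\sqrt\pi\int_0^{2\sqrt{\pi mv}} e^{w^2}\erfc(w)\,dw$, arising from a known antiderivative of $\erfc$ together with its large-argument asymptotics.

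The main obstacle will be the bookkeeping in this last step: the $\log T$-divergences produced by the $a^+_{\ell_{\pm X}}(0)$ contributions in $B_T(\ell_{\pm X},f)$ must combine with the digamma-type divergences in the truncated cycle integral to leave precisely the single explicit $\log T$-term in the statement, while the convergent pieces must assemble exactly into $\int^{reg}_{c_X} f\,dz_X$ and the stated $\mathcal{F}$-like special function with no leftover finite constants.
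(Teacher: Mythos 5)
In outline you have reproduced the paper's own argument: the Green identity \eqref{P-L-eq} for $\eta(X)$ applied to the truncated surface, the jump of $\partial\eta$ across $c_X$ producing $\tfrac12 e(m\tau)\int_{c_X^T}f\,dz_X$ (Lemma~\ref{big-current-split-hyperbolic1}), the $\bar\partial f\cdot\eta$ boundary term disappearing in the limit (Lemma~\ref{partialF-behavior}), the reduction of the remaining boundary term to the two cusps $\ell_{\pm X}$, and the reassembly of $\int^{reg}_{c_X}f\,dz_X$ via Theorem~\ref{reg-def2}. Unfolding the $\Gamma$-sum first is only cosmetic: by periodicity of $f$ your full-line horocycle integrals $B_T(\ell_{\pm X},f)$ fold back into the paper's integrals of $f$ against $\sum_{\g\in\bar\G_{\ell_{\pm X}}}\partial\eta(X,\tau,\g z)$, so exactly the same Fourier/special-function evaluations (the paper's Lemma~\ref{l:key-formula} together with Lemmas~\ref{l:111} and \ref{l:112}) remain to be carried out.

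Two of your stated justifications would not go through as written. First, Lemma~\ref{eta-sing}(ii) gives square-exponential decay of $\eta(X)$ and $\partial\eta(X)$ only at the boundary of the tube $\G_X\backslash D$ and only under the hypothesis that $\bar\G_X$ is infinite cyclic, which is exactly what fails in the split case; and Proposition~\ref{theta-decay} concerns the $\Gamma$-summed theta kernel, not the single functions $\varphi_0(X,\cdot)$, $\eta(X,\cdot)$. What is actually needed is the explicit $\erfc$ formula \eqref{psi}/\eqref{del-eta-formula}, which decays away from $c_X$ but not near its endpoints, together with a summability estimate showing that the infinitely many boundary horocycles other than the two at the endpoints of $c_X$ (equivalently, in the folded picture, all $\g$ outside $\bar\G_{\ell_{\pm X}}$) contribute $o(1)$; the paper imports precisely this from \cite{BFCrelle}, Lemma~5.2. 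Second, the corrections relating $\int_{c_X\cap M_T}f\,dz_X$ to $\int^{reg}_{c_X}f\,dz_X$ are not of size $O(\log T)$: the coefficients $a^+_{\ell_{\pm X}}(n)$ with $n<0$ make the truncated cycle integral grow exponentially in $T$, and the content of Theorem~\ref{reg-def2} (and of the digamma kernel hidden in $\sum_{\g\in\bar\G_{\ell_X}}\partial\eta$, produced in Lemma~\ref{l:key-formula} by a Hurwitz-zeta regularization and Poisson summation of the $\erfc$-kernel) is that these exponentially large pieces cancel against matching pieces of the horocycle term, leaving only the $(a^+_{\ell_X}(0)+a^+_{\ell_{-X}}(0))\log T$ divergence, the constant block, and the factor $2\pi\int_0^{\sqrt v}e^{4\pi m w^2}dw$ on the $n<0$ coefficients. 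Relatedly, attributing the $\calE\calI(2\pi n c_\pm)$ terms to the $n>0$ and $f^-$ contributions alone is off: by Definition~\ref{reg-def1} the $n<0$ coefficients also enter $\int^{reg}$ through $-\Ei$, in addition to producing the complementary-trace factor. None of this changes the strategy, but these are exactly the delicate points of this Fourier coefficient, so they need to be made precise for the plan to become a proof.
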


Before we prove the proposition, we first show how this implies the formula for this Fourier coefficient given in Theorem~\ref{th:Main-Maass}. In view of \eqref{reg-int-m2} we only need to show

\begin{lemma}
\begin{equation}\label{logT-matching}
  \frac{1}{2 \sqrt{m}} \sum_{X\in \G \backslash {L}_{m,h} }  (a^+_{\ell_X}(0)+ a^+_{\ell_{-X}}(0)) =  \frac{1}{\sqrt{N}}\sum_{\ell \in \G \back \Iso(V)} a^+_{\ell}(0) \eps_{\ell} b_{\ell}(m,h).
\end{equation}

\end{lemma}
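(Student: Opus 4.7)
The plan is to interpret both sides as weighted counts on a common combinatorial set and then match them via a careful orbit computation.  I introduce
\[
\mathcal{S} := \{(X,\eps) : X \in L_{m,h},\ \eps \in \{+,-\}\}
\]
equipped with the $\Gamma$-equivariant map $\pi\colon \mathcal{S}\to \Iso(V)$ sending $(X,+)\mapsto \ell_X$ and $(X,-)\mapsto \ell_{-X}$.  Because $\bar\Gamma \subset \operatorname{PSL}_2(\R)$ preserves the orientation of the geodesic $c_X$, it never swaps $\ell_X$ and $\ell_{-X}$; combined with the triviality of $\bar\Gamma_X$ in the split hyperbolic case, each $\Gamma$-orbit $[X]$ lifts to exactly two distinct classes $[(X,+)],[(X,-)] \in \Gamma\bs \mathcal{S}$.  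This turns the numerator on the left-hand side into
\[
\sum_{[X]}\bigl(a^+_{\ell_X}(0)+a^+_{\ell_{-X}}(0)\bigr)\;=\;\sum_{[(X,\eps)]\in \Gamma\bs\mathcal{S}}a^+_{\pi(X,\eps)}(0).
\]

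Next I will reorganize this sum by fibering $\mathcal{S}$ over $\Iso(V)$ via $\pi$.  For each isotropic line $\ell$, a vector $X\in L_{m,h}$ satisfies $X\perp\ell$ if and only if $\ell\in\{\ell_X,\ell_{-X}\}$, and exactly one sign $\eps$ then realizes $\ell_\eps(X)=\ell$; thus the $\pi$-fiber over $\ell$ is in $\bar\Gamma_\ell$-equivariant bijection with $\{X\in L_{m,h}:X\perp\ell\}$.  The weighted sum therefore becomes $\sum_{[\ell]}a^+_\ell(0)\cdot M_\ell$ with
\[
M_\ell:= \bigl|\bar\Gamma_\ell\bs\{X\in L_{m,h}:X\perp\ell\}\bigr|,
\]
and the lemma is reduced to the local identity $M_\ell = \tfrac{2\sqrt{m}\,\eps_\ell}{\sqrt{N}}\,b_\ell(m,h)$ at each cusp.

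The main step is to establish this local identity.  After conjugating by $\sigma_\ell$ so that $\ell$ becomes the cusp $\ell_0=\infty$, a vector $X\in L+h$ with $X\perp\ell_0$ has the explicit shape $X=\kzxz{a}{b}{0}{-a}$ with $Na^2=m$, so $a=\pm\sqrt{m/N}$, while $b$ ranges over a coset of $\beta_\ell\Z$.  Under the projection $L\cap\ell_0^\perp\twoheadrightarrow K_\ell$ of \eqref{eq:cl2}, the image of $X$ depends only on $a$, and $b_\ell(m,h)$ by construction counts exactly the admissible $\bar X\in K_\ell'$ of norm $m$ in the class $\bar h$.  The conjugation action of $\sigma_\ell^{-1}\bar\Gamma_\ell\sigma_\ell=\langle\kzxz{1}{\alpha_\ell}{0}{1}\rangle$ sends $b\mapsto b-2a\alpha_\ell$, so the number of $\bar\Gamma_\ell$-orbits of lifts of a fixed $\bar X$ is the index $[\beta_\ell\Z : 2|a|\alpha_\ell\Z] = 2|a|\alpha_\ell/\beta_\ell = 2\sqrt{m/N}\,\eps_\ell$.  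Multiplying by the $b_\ell(m,h)$ possible $\bar X$ yields the claimed formula.

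The only technical point I anticipate is verifying that $2|a|\alpha_\ell$ is genuinely a positive integer multiple of $\beta_\ell$, so that the above index is meaningful.  This follows directly from the fact that $\bar\Gamma_\ell$ preserves the coset $L+h$: the shift $-2a\alpha_\ell u_0$ must lie in $L\cap\ell_0=\beta_\ell\Z\,u_0$ for every admissible $a$, forcing $2a\alpha_\ell\in\beta_\ell\Z$.  Once this compatibility is in hand, summing the local identity over $[\ell]\in\Gamma\bs\Iso(V)$ and dividing by $2\sqrt{m}$ reproduces exactly the right-hand side of the lemma.
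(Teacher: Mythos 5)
Your proof is correct, and it follows the same basic strategy as the paper's: both arguments fiber the sum over the $\G$-classes of cusps and reduce the identity to a per-cusp count of orbits of vectors perpendicular to $\ell$. The difference is one of self-containedness versus citation. The paper simply quotes \cite{FCompo}, Lemma~3.7, for the fact that the number of classes in $\G\bs L_{m,h}$ whose geodesic ends at $\ell$ is either $0$ or $2\sqrt{m/N}\,\eps_\ell$, and then identifies $\delta_\ell(m,h)+\delta_\ell(m,-h)$ with $b_\ell(m,h)$ using the earlier remark on the coefficients of $\tilde\Theta_{K_\ell}$. You instead re-derive that count from scratch: after conjugating $\ell$ to $\infty$, a vector $X=\kzxz{a}{b}{0}{-a}$ with $Na^2=m$ has $b$ running through a coset of $\beta_\ell\Z$, the generator of $\sigma_\ell^{-1}\bar\G_\ell\sigma_\ell$ acts by $b\mapsto b-2a\alpha_\ell$, and the index $2|a|\alpha_\ell/\beta_\ell=2\sqrt{m/N}\,\eps_\ell$ (an integer because $\G$ preserves $L+h$, as you note) gives the orbit count per admissible value of $a$, while the number of admissible $a$'s, i.e.\ of norm-$m$ vectors of $K_\ell'$ in the class $\bar h$ that lift to $(L+h)\cap\ell^\perp$, is exactly $b_\ell(m,h)$; this encodes the same $h\equiv -h$ dichotomy the paper expresses through $\delta_\ell(m,\pm h)$. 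So your argument buys a complete, elementary verification of the counting input at the cost of length, whereas the paper's is a four-line reduction to an external lemma; the bookkeeping with the set $\mathcal{S}$ of signed vectors is just a clean formalization of the paper's sorting of the ordered endpoints $\ell_X$, $\ell_{-X}$ and is equivalent to it.
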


\begin{proof}
We can sort the infinite geodesics by the cusps $\ell$ to which they go. We define $\delta_{\ell}(m,h)$ to be $1$ if there exists a $X \in L_{m,h}$ such that $c_X$ ends at the cusp $\ell$, that is, if $X$ is perpendicular to $\ell$. By \cite{FCompo}, Lemma~3.7, there are either no or $2 \sqrt{m/N} \eps_{\ell}$ many $X$ in $\G \backslash {L}_{m,h}$ such that the corresponding $c_X$ all end in $\ell$. Hence the left hand side of \eqref{logT-matching} is equal to
\[
\frac1{\sqrt{N}} \sum_{\ell \in \G \back \Iso(V)} \eps_{\ell}(\delta_{\ell}(m,h) + \delta_{\ell}(m,-h)) a^+_{\ell}(0)= \frac1{\sqrt{N}} \sum_{\ell \in \G \back \Iso(V)} \eps_{\ell} b_\ell(m,h)a^+_{\ell}(0).
\]
This proves the lemma.
\end{proof}

The remainder of the section will be devoted to the proof of Proposition~\ref{key-split-hyperbolic}. We begin with

\begin{lemma}\label{big-current-split-hyperbolic1}
Let $f \in H_0^+(\G)$. Then
\begin{align*}
\int_{M_T} f(z) \sum_{ \g \in \bar\G} \varphi_0(X,\tau,\g z) d\mu(z)& =  \frac12 e(m{\tau}) \int_{
c^T_X} f(z) dz_X \\
& \quad + \frac{1}{2\pi i} \int_{\partial M_T} f(z) \sum_{ \g \in  \bar\G} \partial \eta(X,\tau, \g z) \\
& \quad + \frac{1}{2\pi i} \int_{\partial M_T} \bar{\partial} f(z) \sum_{ \g \in  \bar\G} \eta(X,\tau, \g z).
\end{align*}
Here $c^T_X = c_X \cap M_T$.
\end{lemma}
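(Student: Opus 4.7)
The plan is to mimic the proof of Proposition~\ref{big-current-hyperbolic} but work on the preimage of $M_T$ in $D$, exploiting the fact that $\bar{\Gamma}_X = 1$. First I would unfold. Since the preimage $\widetilde M_T \subset D$ of $M_T$ is $\Gamma$-invariant and $f$ is $\Gamma$-invariant, a standard change of variable gives
\[
\int_{M_T} f(z) \sum_{\gamma \in \bar{\Gamma}} \varphi_0(X,\tau,\gamma z)\, d\mu(z) = \int_{\widetilde M_T} f(z)\, \varphi_0(X,\tau,z)\, d\mu(z).
\]
Next I would rewrite the integrand as an exact form. Since $f$ is harmonic of weight $0$ (so $\partial\bar{\partial} f = 0$) and $\partial\bar{\partial}\eta = -2\pi i\,\varphi_0\, d\mu$ off the singular set by Proposition~\ref{new-xi-prop}, a direct computation using $\partial f \wedge \partial \eta = 0$ yields the identity
\[
2\pi i\, f\,\varphi_0\, d\mu = d\bigl(f\,\partial\eta + \eta\,\bar{\partial} f\bigr)
\]
on $D \setminus c_X$.

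I would then apply Stokes' theorem to the domain $\widetilde M_T \setminus U_\eps(c_X)$, where $U_\eps(c_X) = \{z \in D;\ |(X,X(z))| < \eps\}$ is the same tubular neighborhood used in the proof of Proposition~\ref{big-current-hyperbolic}. The boundary $\partial(\widetilde M_T \setminus U_\eps)$ splits into $\partial\widetilde M_T$ and $-\partial U_\eps(c_X)$. Letting $\eps \to 0$, the contribution of $\int_{\partial U_\eps} \eta\,\bar{\partial} f$ vanishes because $\eta$ is continuous across $c_X$ (Lemma~\ref{eta-sing}(ii)) while the length of $\partial U_\eps$ shrinks, whereas $-\frac{1}{2\pi i}\int_{\partial U_\eps} f\,\partial\eta$ produces exactly the geodesic integral $\tfrac12 e(m\tau)\int_{c_X^T} f\, dz_X$ by the explicit formula for $\partial \eta$ in Lemma~\ref{eta-sing}(ii), verbatim as in the proof of Proposition~\ref{big-current-hyperbolic}.

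Finally I would re-fold the remaining boundary integral: decomposing $\partial \widetilde M_T$ as the disjoint union (up to measure zero) of $\gamma\,(\partial M_T \cap F)$ over $\gamma \in \bar{\Gamma}$ for a fundamental domain $F$ of $\Gamma$, and using the $\Gamma$-invariance of $f$ and $\bar{\partial} f$, one obtains
\[
\int_{\partial \widetilde M_T} f\,\partial\eta(X,z) = \int_{\partial M_T} f(z) \sum_{\gamma \in \bar{\Gamma}} \partial\eta(X,\gamma z),
\]
and similarly for the $\bar{\partial} f \cdot \eta$ term; this yields the claimed formula. The main technical point to verify is the absolute convergence of $\sum_\gamma \eta(X,\gamma z)$ and $\sum_\gamma \partial\eta(X,\gamma z)$ on each horocycle of $\partial M_T$. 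By Lemma~\ref{eta-sing}(ii) the summands decay square-exponentially unless $\gamma z$ lies near $c_X$, and on a fixed horocycle at a cusp $\ell$ only those finitely many $\gamma$ with $\gamma^{-1}\ell_X = \ell$ or $\gamma^{-1}\ell_{-X}= \ell$ give a non-negligible contribution, so the sum converges and the interchange of sum and integral is justified.
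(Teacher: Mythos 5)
Your proposal is correct and is in substance the paper's own argument: the paper proves the lemma by running the proof of Proposition~\ref{big-current-hyperbolic} on the truncated fundamental domain with the $\Gamma$-symmetrized kernel $\sum_{\gamma\in\bar\Gamma}\eta(X,\tau,\gamma z)$, whose only singularities in $M_T$ lie along $c_X^T$, and keeping the resulting boundary terms on $\partial M_T$; your unfold-to-$\widetilde M_T$, apply Stokes, then refold is the same computation carried out upstairs, and your exact-form identity $2\pi i\,f\varphi_0\,d\mu=d(f\partial\eta+\eta\bar\partial f)$ for harmonic $f$ is the correct form of \eqref{P-L-eq}. Two justifications should be repaired in a write-up: the term $\int_{\partial U_\eps}\eta\,\bar\partial f$ does not tend to zero because the boundary length shrinks --- $\partial U_\eps(c_X)$ consists of the two curves $c_{X,\pm\eps}$ flanking the geodesic, whose lengths stay bounded away from zero --- it vanishes because $\eta$ is continuous across $c_X$, so the two oppositely oriented limiting integrals cancel; and the set of $\gamma$ with $\gamma^{-1}\ell_{\pm X}=\ell$ is infinite (a coset of the parabolic $\Gamma_{\ell_{\pm X}}$), so convergence of $\sum_{\gamma}\partial\eta(X,\tau,\gamma z)$ on $\partial M_{T,\ell}$ rests on the fact that only boundedly many of those translates land near $c_X$ while the remaining ones decay square-exponentially along the horocycle --- exactly the parabolic sum that is evaluated in Lemma~\ref{l:key-formula}.
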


\begin{proof}
We proceed as in the proof of Proposition~\ref{big-current-hyperbolic}. Since in a truncated fundamental domain for $\G$, the only singularities of  $\sum_{ \g \in \bar\G} \partial \eta (X,\tau,\g z)$ are along $c^T_X$, everything goes through as before except that one obtains in addition the boundary terms above.
\end{proof}


As $f \in H_0^+(\G)$, we have that $\bar{\partial}f(z)$ is rapidly decreasing and hence

\begin{lemma}\label{partialF-behavior}
\[
\frac1{2\pi i} \lim_{T \to \infty}  \int_{\partial M_T} \bar{\partial} f(z) \sum_{ \g \in  \bar\G} \eta(X,\tau, \g z) =0.
\]
\end{lemma}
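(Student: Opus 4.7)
The plan is to exploit that $f \in H_0^+(\Gamma)$ forces $\bar\partial f$ to decay exponentially at every cusp, while the theta-like sum $\sum_{\gamma \in \bar\Gamma}\eta(X,\tau,\gamma z)$ has at most polynomial growth at the cusps. The product is then of rapid decay near the cusps, and so the integral over the horocyclic boundary $\partial M_T$ vanishes as $T \to \infty$.

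First I would observe that, since $\xi_0(f)$ is a cusp form by the definition of $H_0^+(\Gamma)$, one has $a^-_\ell(0) = 0$ at every cusp $\ell$. Writing $f(\sigma_\ell z) = f^+_\ell(z) + \sum_{n < 0} a^-_\ell(n)\,e(n\bar z)$ and noting that $f^+_\ell$ is holomorphic, we obtain
\[
\bar\partial f(\sigma_\ell z) = \sum_{n < 0} 2\pi i n\, a^-_\ell(n)\, e(n\bar z)\, d\bar z,
\]
which on the horocycle $y = T$ in $\sigma_\ell^{-1}$-coordinates is $O(e^{-2\pi T/\alpha_\ell})$, with constant depending on the leading nonzero $a^-_\ell(n)$.

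The main step, and principal technical obstacle, is to bound $\sum_{\gamma\in\bar\Gamma}\eta(X,\tau,\gamma z) = \sum_{X' \in \bar\Gamma X}\eta(X',\tau,z)$ near each cusp $\ell$. From the defining integral for $\eta$, the elementary estimate $E_1(u) \leq e^{-u}/u$ for $u > 0$, and the majorant formula $R(X',z) = 2m\cosh^2 d(z,c_{X'})$, one gets that away from the cycle $|\eta(X',\tau,z)| = O(e^{-4\pi m v\, \sinh^2 d(z,c_{X'})})$, and near the cycle $\eta$ stays bounded (since $\eta$ is continuous in the positive discriminant case). Orbit elements $X'$ whose cycle $c_{X'}$ does not end at $\ell$ sit at hyperbolic distance $\gtrsim \log T$ from a horocycle of height $T$ in $\ell$-coordinates and so contribute super-exponentially; the remaining $X'$ (those with $X' \perp \ell$) are parametrized, modulo $\Gamma_\ell$, by finitely many orbits, whose real parts form a family $\{r_j + k\alpha_\ell\}_{k\in\mathbb{Z}}$. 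On the horocycle at height $T$, the distance from $x + iT$ to the vertical cycle at real part $r$ satisfies $\sinh d \asymp |r-x|/T$; summing the resulting Gaussian-type bounds over the $\alpha_\ell$-spaced lattice yields
\[
\Bigl|\sum_{\gamma\in\bar\Gamma}\eta(X,\tau,\gamma z)\Bigr| = O(T)
\]
on the horocycle. Alternatively, the same polynomial (in fact $O(T\log T)$) bound follows by inverting $\Delta_z$ on the constant-in-$x$ Fourier mode at each cusp, using the current identity of Proposition \ref{big-current-hyperbolic} together with the linear growth of $\theta_{m,h}$ from Proposition \ref{theta-decay}.

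Combining the two estimates, the integrand on each horocyclic piece of $\partial M_T$ is bounded by $O(T\, e^{-cT})$ for some $c > 0$, and the horocycle has bounded Euclidean length $\alpha_\ell$. Summing over the finitely many cusps, the full boundary integral is $O(T\, e^{-cT})$, which tends to zero as $T \to \infty$, yielding the lemma.
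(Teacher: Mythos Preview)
Your proposal is correct and follows the same approach as the paper: the paper's justification is the single sentence ``As $f \in H_0^+(\G)$, we have that $\bar{\partial}f(z)$ is rapidly decreasing and hence'' the lemma holds. You have simply fleshed out this one-line argument by making explicit both the exponential decay of $\bar\partial f$ at the cusps (from $a_\ell^-(0)=0$) and the at-most-polynomial growth of the $\eta$-sum there, which together force the boundary integral to vanish in the limit.
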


The main work is to consider
\begin{equation}\label{hard-term}
\frac1{2\pi i}\int_{\partial M_T} f(z) \sum_{ \g \in  \bar\G} \partial \eta(X,\tau, \g z).
\end{equation}
By arguments exactly analogous to \cite{BFCrelle}, Lemma~5.2 (where the integral of $f$ against $\partial \xi(X)$ is considered), we see that the asymptotic behavior of \eqref{hard-term} as $T \to \infty$ is the same as the one of
\begin{align}\label{gisela}
 \frac{1}{2\pi i} \int_{\partial M_{T,\ell_X}  }
 f ( z) \sum_{ \g \in \bar\G_{\ell_X}} \partial \eta(X, \tau, \g  z) +
\frac{1}{2\pi i} \int_{\partial M_{T,{\ell}_{-X}}  }
 f(z)  \sum_{ \g \in \bar\G_{{\ell}_{-X}}} \partial \eta ( X,  \tau,  \g z).
\end{align}
Here $\partial M_{T,\ell_X},{\partial M_{T,{\ell}_{-X}} }$ are the boundary components of $M_T$ at the cusps $\ell_X,\tilde\ell_X= \ell_{-X}$ respectively. All other terms are rapidly decaying.

The key is the asymptotic behavior of $ \sum_{ \g \in \bar\G_{\ell_X}} \partial \eta(X, \tau, \g  z)$.
\begin{lemma}\label{l:key-formula}
Let $r \in \Q $ be the real part of the geodesic $c(X)$ and write $\alpha=\alpha_{\ell_X}$. Define for a nonzero integer $n$ the function $g(n,y)$ by
\[
g(n,y) = \int_{1}^{\infty} e^{4\pi vm/w^2}\left(
 e^{-\left(\tfrac{2\sqrt{\pi v m}}{w} + \tfrac{\pi n y w}{2 \sqrt{\pi v m} \alpha}\right)^2} -
\tfrac{2\pi \sqrt{vm}}{w} \erfc\left(\tfrac{2\sqrt{\pi v m}}{w} + \tfrac{\pi n y w}{2 \sqrt{\pi v m} \alpha} \right)
\right) \frac{dw}{w}.
\]
Then for $r < \Re(\sigma_{\ell_X}z) \leq r +\alpha$ we have
\begin{multline*}
\frac{1}{2\pi i} \sum_{ \g \in \bar\G_{{\ell}_X}} \partial \eta ( X,  \tau,  \g \sigma_{\ell_X} z) e( -m \tau) \\ =
 \frac{1}{2 \sqrt{m}\alpha}  \Biggl[ \sum_{n \ne 0}  g(n,y) e(-n(z-r)/\alpha)
-   \sqrt{\pi} \int_0^{2\sqrt{\pi vm}} e^{w^2} \erfc(w)dw + \log(2 \sqrt{\pi v m}/y)   \\
+ \tfrac12 \psi((z-r)/\alpha) + \tfrac12 \psi(1- (z-r)/\alpha)+ \log \alpha + \tfrac12 \log(2) + \tfrac14 \g\Biggr] dz.
\end{multline*}
\end{lemma}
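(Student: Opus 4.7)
My plan is to prove Lemma~\ref{l:key-formula} by a direct Poisson summation argument combined with digamma identities. First, using the $G$-equivariance of $\eta$, $X(z)$, $R(X,z)$, and $dz_X$, I would reduce to the case $\sigma_{\ell_X}=I$ with $X=\sqrt{m/N}\kzxz{1}{-2r}{0}{-1}$ and $\alpha_{\ell_X}=\alpha$, so that $c_X$ is the vertical line $\Re(z)=r$ and $\bar\Gamma_{\ell_X}$ acts by $z\mapsto z+k\alpha$ for $k\in\Z$. A direct computation from \eqref{psi}, using $(X,X(z+k\alpha))=\tfrac{2\sqrt m}{y}(r-x-k\alpha)$, $R(X,z+k\alpha)=\tfrac{2m}{y^2}|z+k\alpha-r|^2$, and $(X,X'(z+k\alpha))/R(X,z+k\alpha)=-i/(2\sqrt m(z+k\alpha-r))$, yields
\[
\frac{1}{2\pi i}\partial\eta(X,z+k\alpha)e(-m\tau)=-\frac{1}{4\sqrt m\,\alpha}\,\frac{\sgn(u+k)}{w+k}\erfc(B'|u+k|)\,dz,
\]
where $w=(z-r)/\alpha=u+iv'$, $v'=y/\alpha$, $a=2\sqrt{\pi vm}$, and $B'=a/v'$. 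The remaining task is thus to evaluate $S(w):=\sum_{k\in\Z}\frac{\sgn(u+k)}{w+k}\erfc(B'|u+k|)$ in closed form for $0<u\leq 1$.

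To extract the $\psi$-terms, I would use the identity $\sgn(u+k)\erfc(B'|u+k|)=\sgn(u+k)-\erf(B'(u+k))$ (since $\erf$ is odd) and write $S=S^{\mathrm{sgn}}-S^{\mathrm{erf}}$, where each piece individually diverges but their difference converges absolutely. For $0<u\leq 1$ one has $\sgn(u+k)=\sgn(k)$ for $k\neq 0$; telescoping $S^{\mathrm{sgn}}$ against the (antisymmetric, hence formally vanishing) series $\sum_{k\neq 0}\sgn(k)/k$ and applying the Mittag-Leffler expansion $\psi(w)=-\gamma+\sum_{k\geq 0}\bigl(\tfrac{1}{k+1}-\tfrac{1}{k+w}\bigr)$ gives
\[
S^{\mathrm{sgn}}\ \text{``}={\text ''}\ \frac{1}{w}-\psi(w+1)-\gamma-\psi(1-w)-\gamma=-\psi(w)-\psi(1-w)-2\gamma.
\]
This is where the $\psi(w)+\psi(1-w)$ contribution in the lemma originates; the same formal divergent sum must be added to $S^{\mathrm{erf}}$ to keep the identity consistent, which is where the careful regularization bookkeeping is essential.

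The remaining contribution $S^{\mathrm{erf}}$ (after cancellation of the spurious divergence) is handled by Poisson summation. Using $\erf(B'(u+k))=\frac{2B'(u+k)}{\sqrt\pi}\int_0^1 e^{-B'^2(u+k)^2\sigma^2}d\sigma$ and interchanging order of integration and summation, one reduces to sums of Gaussians and Gauss/Cauchy type sums in $k$, which Poisson-transform cleanly in $u$. The non-zero Fourier modes produce, after substituting $\sigma=|n|s$ and carrying out an integration by parts, exactly the integrals $g(n,y)e^{-2\pi inw}$ as defined in the lemma (this is a bookkeeping calculation using my identity $g(n,y)=e^{a^2}\int_1^\infty e^{-(a+bs)^2}s^{-1}ds$ with $b=\pi nv'/a$). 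The $n=0$ Fourier mode, which is $u$-independent, is computed via integration by parts and the key identity
\[
\frac{d}{da}\int_0^\infty\ln(a^2+t^2)\,e^{-t^2}\,dt=\pi e^{a^2}\erfc(a),
\]
together with $\int_0^\infty(\ln t)e^{-t^2}dt=-\tfrac{\sqrt\pi}{4}(\gamma+2\log 2)$, and produces exactly $-2\log(2a)-\gamma+2\sqrt\pi\int_0^a e^{s^2}\erfc(s)\,ds$, which matches $-2\log(a/y)-2\log\alpha-\log 2-\tfrac{1}{2}\gamma+2\sqrt\pi\int_0^a e^{s^2}\erfc(s)\,ds$ (since $a/B'=v'=y/\alpha$) up to the $\psi$-terms above.

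The main obstacle is the simultaneous control of the two formally divergent sums $S^{\mathrm{sgn}}$ and $S^{\mathrm{erf}}$: their common divergent part $\sum_{k\neq 0}\sgn(k)/k$ must be cancelled between them in a way compatible with Poisson summation in the convergent part. Concretely, I would introduce a cutoff (e.g.\ weight the sum by $e^{-\epsilon|k|}$) and show that in the limit $\epsilon\to 0$ the sum of the two regularized pieces equals the closed form stated, with the $-2\gamma$ constant combining into the $\tfrac{1}{4}\gamma$ and $\tfrac{1}{2}\log 2$ of the lemma once the overall factor of $-1/(4\sqrt m\alpha)$ versus $1/(2\sqrt m\alpha)$ is accounted for.
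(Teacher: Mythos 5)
Your reduction to a vertical geodesic and the explicit summand $-\tfrac{1}{4\sqrt m\,\alpha}\,\tfrac{\sgn(u+k)}{w+k}\,\erfc(B'|u+k|)\,dz$ are correct, and your organization is genuinely different from the paper's: you split $\sgn\cdot\erfc=\sgn-\erf$, get the digamma terms from the sign part via Mittag--Leffler, and Poisson-sum the erf part, regularizing the common divergence by a cutoff; the paper instead inserts an auxiliary $w^s$ into the integral representation of $\partial\eta$, splits $\int_1^\infty=\int_0^\infty-\int_0^1$, evaluates the full-line piece by Gamma factors and Hurwitz zeta functions (whence $\psi$), and Poisson-sums the inverted finite piece, so that the divergence appears as two explicit $1/s$ poles which cancel. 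Your route could in principle be carried out, but as written it has a genuine gap.

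The matching of the nonzero Fourier modes rests on your identity $g(n,y)=e^{a^2}\int_1^\infty e^{-(a+bs)^2}\,\tfrac{ds}{s}$ with $a=2\sqrt{\pi vm}$, $b=\pi n v'/a$. This is true for $n>0$ but false for $n<0$: writing the lemma's $g$ as $\int_1^\infty e^{-2ab-b^2w^2}\tfrac{dw}{w}-\sqrt\pi a\int_1^\infty e^{a^2/w^2}\erfc(a/w+bw)\tfrac{dw}{w^2}$, both it and your expression have the same $b$-derivative $-e^{-2ab-b^2}/b$ and both vanish as $b\to+\infty$, so they agree for $b>0$; but as $b\to-\infty$ your expression tends to $0$ while the lemma's $g$ tends to $-2\sqrt\pi\int_0^a e^{t^2}dt$ (since $\erfc\to 2$), so for $n<0$ the two differ by the constant $2\sqrt\pi\int_0^{2\sqrt{\pi vm}}e^{t^2}dt$. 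That constant is exactly what survives in the limit $y=T\to\infty$ and produces, via Lemma~\ref{l:112} and Proposition~\ref{key-split-hyperbolic}, the complementary-trace terms $2\pi\bigl(\int_0^{\sqrt v}e^{4\pi m w^2}dw\bigr)\sum_{n<0}a^+_{\ell_X}(n)e(nr/\alpha)$; so the discrepancy is not cosmetic, and your identification of the $n<0$ modes with the stated $g(n,y)$ is not established. In addition, the parenthetical ``antisymmetric, hence formally vanishing'' for $\sum_{k\neq0}\sgn(k)/k$ is wrong (that series is $\sum_{k\neq0}1/|k|$ and diverges), and the real crux of your plan --- that the $\epsilon$-cutoff regularization of the two individually divergent halves commutes with Poisson summation and that the divergences cancel leaving precisely the constants $\tfrac12\log 2+\tfrac14\gamma+\log(2\sqrt{\pi vm}\,\alpha/y)$ --- is only asserted, not carried out; this bookkeeping is exactly what the paper's $w^s$-regularization makes transparent through the cancelling $1/s$ poles from the Hurwitz zeta function and from the $n=0$ Poisson mode.
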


\begin{proof}

By applying $\sigma_{\ell_X}$ we can assume that $X =\sqrt{m/N}\kzxz{1}{-2r}{0}{-1}$ so that $c_X = \{ z \in D; \, \Re(z) = r \}$ is a vertical geodesic. Hence $\ell_X$ represents the cusp $\infty$ and $\bar\G_{\ell_X} = \left\{ \kzxz{1}{\alpha k}{0}{1}; \, k \in \Z \right\}$ with $\alpha = \alpha_{\ell_X}$. Then (see also \eqref{del-eta-formula})
\[
\frac{1}{2 \pi i} \partial \eta(X,z) = -\sqrt{v} \frac{x-r}{y} \frac{1}{z-r} \left(\int_1^{\infty} e^{-4 \pi vm \frac{(x+r)^2}{y^2} w^2}  dw\right)  e(m\tau) dz.
\]
Replacing $z$ by $z+r$, we can assume $r=0$. We set for $s \in \C$
\[
\omega(z,s) = -\sqrt{v} \frac{x}{y} \frac{1}{z} \int_1^{\infty} e^{-4 \pi vm \frac{x^2}{y^2} w^2} w^s dw,
\]
so that
\[
\frac{1}{2 \pi i} \partial \eta(X,z) = \omega(z,0) e(m\tau) dz.
\]
We also define
\[
\Omega(z,s) := \sum_{n \in \Z} \omega(z+\alpha n,s),
\]
so that
\[
 \frac{1}{2 \pi i} \sum_{ \g \in \bar\G_{{\ell}_X}} \partial \eta ( X,  \tau,  \g z) = \Omega(z,0) e(m\tau) dz.
\]
Note that $\Omega(z,s)$ is a holomorphic function in $s$. For $\Re(s) > - 1$, we write
\begin{align}
\Omega(z,s)& \notag \\ & = -\sqrt{v} \frac{1}{y} \sum_{n \in \Z} \frac{x+\alpha n}{z+ \alpha n} \int_0^{\infty} e^{-4 \pi vm \frac{(x+ \alpha n)^2}{y^2} w^2} w^s dw  \label{O1} \\
& \quad + \sqrt{v} \frac{1}{y} \sum_{n \in \Z} \frac{x+\alpha n}{z+ \alpha n} \int_0^1 e^{-4 \pi vm \frac{(x+ \alpha n)^2}{y^2} w^2} w^s dw. \label{O2}
\end{align}
For the term \eqref{O1}, we compute
\begin{equation}\label{O11}
-2^{-s-2} v^{-s/2} (\pi m)^{-(s+1)/2} \alpha^{-s-1}y^s \G\left( \tfrac{s+1}{2}\right) \sum_{n \in \Z} \frac{\sgn(x+\alpha n)}
{(\tfrac{z}{\alpha} + n) |\tfrac{x}{\alpha} + n|^s}
\end{equation}
Since $0<x \leq \alpha$, we see
\begin{multline}
\sum_{n \in \Z} \frac{\sgn(x+\alpha n)}{(\tfrac{z}{\alpha} + n) |\tfrac{x}{\alpha} + n|^s}
 = \sum_{n =0}^{\infty } \frac{1}{(\tfrac{z}{\alpha} + n) |\tfrac{x}{\alpha} + n|^s} + \sum_{n =0}^{\infty} \frac{1}{(n +(1-\tfrac{z}{\alpha}) |n+(1-\tfrac{x}{\alpha})|^s}.
\end{multline}
Now (using \eqref{digamma})
\[
\lim_{s \to 0^+} \sum_{n =0}^{\infty } \frac{1}{(w + n) |w'+n|^s} - \frac{1}{(w'+n)^{s+1}} = \sum_{n=0}^{\infty} \frac{1}{w+n} - \frac{1}{w'+n} = -\psi(w) + \psi(w').
\]
Since the constant term of the Laurent expansion of the Hurwitz zeta-function $H(w',s)$ at $s=1$ is $-\psi(w')$, we conclude
\[
\sum_{n =0}^{\infty } \frac{1}{(w + n) |w'+n|^s} = \frac{1}{s} - \psi(w) + O(s).
\]
Via \eqref{O11} we therefore easily see
\begin{multline}\label{O111}
 -\sqrt{v} \frac{1}{y} \sum_{n \in \Z} \frac{x+\alpha n}{z+ \alpha n} \int_0^{\infty} e^{-4 \pi vm \frac{(x+ \alpha n)^2}{y^2} w^2} w^s dw \\
 = \frac{1}{2 \sqrt{m}\alpha} \left( -\frac1{s} + \tfrac12 \psi\left(\tfrac{z}{\alpha}\right) +  \tfrac12 \psi\left(1-\tfrac{z}{\alpha}\right) -\frac{\G'\left(\tfrac12\right)}{4\sqrt{\pi}}+ \log\left(\tfrac{2 \sqrt{\pi v m} \alpha}{y}\right) \right) + O(s).
\end{multline}
Note $\G'(1/2)= -\sqrt{\pi}(2 \log(2) + \g)$.

For \eqref{O2}, we first substitute $w \to 1/w$ in the integral and obtain
\begin{equation}\label{O22}
 \sqrt{v} \frac{1}{y} \sum_{n \in \Z} \frac{x+\alpha n}{z+ \alpha n} \int_1^{\infty} e^{-4 \pi vm \frac{(x+ \alpha n)^2}{y^2 w^2}} w^{-2-s} dw.
\end{equation}
Now we apply Poisson summation. Using \cite{BFCrelle}, Lemma~5.1, we see that \eqref{O22} is equals
\[
 \frac{1}{2 \sqrt{m}\alpha} \sum_{n \in \Z}  g(n,y,s) e(-nz/\alpha)
\]
with
\begin{multline*}
g(n,y,s) \\ = \int_{1}^{\infty} e^{4\pi vm/w^2}\left(
 e^{-\left(\tfrac{2\sqrt{\pi v m}}{w} + \tfrac{\pi n y w}{2 \sqrt{\pi v m} \alpha}\right)^2} -
\tfrac{2\pi \sqrt{vm}}{w} \erfc\left(\tfrac{2\sqrt{\pi v m}}{w} + \tfrac{\pi n y w}{2 \sqrt{\pi v m} \alpha} \right)
\right) w^{-s} \frac{dw}{w}.
\end{multline*}
For $n \ne 0$, the function $g(n,z,s)$ is holomorphic at $s=0$, while for $n=0$ we have
\begin{align*}
g(0,z,s) &= \int_1^{\infty}w^{-s-1}dw -  2\pi \sqrt{vm} \int_{1}^{\infty} e^{4\pi vm/w^2}\erfc\left(\tfrac{2\sqrt{\pi v m}}{w} \right)w^{-s-2}dw \\
&= \frac{1}{s} - \sqrt{\pi} \int_0^{2\sqrt{\pi vm}} e^{w^2} \erfc(w)dw + O(s).
\end{align*}
Combining this with \eqref{O111} completes the proof of Lemma~\ref{l:key-formula}.
\end{proof}

Lemma~\ref{l:key-formula} now immediately gives

\begin{lemma}\label{l:111}
Let $r \in \Q $ be the real part of the geodesic $c(X)$ and write $\alpha=\alpha_{\ell_X}$. Let $f(\sigma_{\ell_X}z) = \sum_{n \in \Z} a^+_{\ell_X}(n) e^{2\pi i nz/\alpha} + \sum_{n <0} a^-_{\ell_X}(n) e^{2\pi i n\bar{z}/\alpha}$ be the Fourier expansion of $f$ at the cusp $\ell_X$. Then
\begin{align*}
&\frac{1}{2\pi i} \int_{\partial M_{T,\ell_X}  }
 f ( z) \sum_{ \g \in \bar\G_{\ell_X}} \partial \eta(X, \tau, \g  z) \notag  \\ &=
 - \frac{1}{2 \sqrt{m}} \sum_{n \ne 0}  g(n,T)  e(n r/\alpha) \left( a^+_{\ell_X}(n) + a^-_{\ell_X}(n) e^{4\pi n y} \right)
 \\
 & \quad -\frac{1}{4 \sqrt{m}\alpha} \int_{z=iT}^{iT+\alpha}
 \bigl[ \psi(z/\alpha) + \psi(1-z/\alpha) + 2\log \alpha \bigr] f(\sigma_{\ell_X}(z+r)) dz \\
  &\quad-\frac{1}{2 \sqrt{m}} \left( \tfrac12 \log(2) + \tfrac14 \g+ \log(2 \sqrt{\pi v m} /T)- \sqrt{\pi} \int_0^{2\sqrt{\pi vm}} e^{w^2} \erfc(w)dw \right) a^+_{\ell_X}(0).
\end{align*}
\end{lemma}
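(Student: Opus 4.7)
The plan is to substitute Lemma~\ref{l:key-formula} directly into the boundary integral and extract the Fourier coefficients of $f$ by orthogonality on a period. First I would use $\SL_2(\Z)$-equivariance of both sides to reduce to the case $\ell_X=\infty$ and $\sigma_{\ell_X}=I$, so that $c_X = \{\Re(z)=r\}$ and the boundary component $\partial M_{T,\ell_X}$ is the horizontal segment $\{u+iT : r\le u<r+\alpha\}$, traversed right-to-left as part of the outward-oriented boundary of $M_T$; this orientation accounts for the overall minus sign on the right hand side of the formula.

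Next, I would insert the explicit expression of Lemma~\ref{l:key-formula} for $\frac{1}{2\pi i}\sum_{\gamma\in \bar\G_{\ell_X}} \partial\eta(X,\tau,\gamma z)\cdot e(-m\tau)$ together with the Fourier expansion $f(z) = \sum_n a^+_{\ell_X}(n)e^{2\pi inz/\alpha} + \sum_{n<0} a^-_{\ell_X}(n)e^{2\pi in\bar z/\alpha}$ and integrate term by term in $u$ at height $T$. For each $n\neq 0$, the character $e(-n(z-r)/\alpha)$ appearing in the bracket picks out by orthogonality a unique Fourier mode of $f$. A short calculation at $z=u+iT$ yields the product $e(n\bar z/\alpha)\cdot e(-n(z-r)/\alpha) = e^{2\pi in r/\alpha}e^{4\pi nT/\alpha}$, so that after integration over $u\in[r,r+\alpha]$ the holomorphic part contributes $\alpha\cdot a^+_{\ell_X}(n)e(nr/\alpha)$ and (for $n<0$) the anti-holomorphic part contributes $\alpha\cdot a^-_{\ell_X}(n)e(nr/\alpha)$ multiplied by the appropriate decaying exponential, reproducing the first line of the lemma after absorbing the $\frac{1}{2\sqrt m\,\alpha}$ prefactor and the orientation sign.

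The remaining terms inside the bracket of Lemma~\ref{l:key-formula} split into two groups. The digamma pair $\tfrac12\psi((z-r)/\alpha)+\tfrac12\psi(1-(z-r)/\alpha)+\log\alpha$ does not decompose into finitely many Fourier modes, so it must be kept as an integral; the substitution $z\mapsto z+r$ then realigns the contour to $[iT,iT+\alpha]$ and leaves $f(\sigma_{\ell_X}(z+r))$ in the integrand, producing the middle line. The remaining terms $-\sqrt{\pi}\int_0^{2\sqrt{\pi vm}}e^{w^2}\erfc(w)\,dw + \log(2\sqrt{\pi vm}/T) + \tfrac12\log 2 + \tfrac14\gamma$ are independent of $u$, so their integrals against $f$ extract only $\alpha\cdot a^+_{\ell_X}(0)$; combined with the $\frac{1}{2\sqrt m\,\alpha}$ prefactor this yields the final line.

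The main obstacle will be purely bookkeeping: getting the orientation of $\partial M_{T,\ell_X}$ right (so as to produce the overall minus sign) and executing the shift $z\mapsto z+r$ that converts the $(z-r)$-normalization of Lemma~\ref{l:key-formula} into the cleaner $[iT,iT+\alpha]$ contour with the shift absorbed into the argument of $f$. Once these conventions are pinned down, everything reduces to routine Fourier integration.
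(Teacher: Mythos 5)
Your proposal is correct and is precisely the argument the paper intends (its proof of this lemma consists of the remark that Lemma~\ref{l:key-formula} ``immediately gives'' it): you substitute the key formula, integrate the Fourier expansion of $f$ against it over the period-$\alpha$ horizontal boundary segment at height $T$, let orthogonality extract the modes $n\neq 0$, pair the $u$-independent constants with $a^+_{\ell_X}(0)$, keep the digamma terms as the shifted integral via $z\mapsto z+r$, and let the induced (right-to-left) boundary orientation supply the overall minus sign. Your exponential $e^{4\pi nT/\alpha}$ for the anti-holomorphic contribution is indeed the correct one, the lemma's ``$e^{4\pi ny}$'' being understood at $y=T$ in the paper's normalization.
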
 \qed

For the first term in Lemma~\ref{l:111}, using
$\erfc(t)= O(e^{-t^2})$ as
$t \rightarrow \infty$ and
$\erfc(t)= 2$ as $t\rightarrow-\infty$,
 we easily see

\begin{lemma}\label{l:112}
\begin{multline*}
 - \frac{1}{2 \sqrt{m}} \lim_{T \to \infty }\sum_{n \ne 0}  g(n,T) e(n r/\alpha) \left( a^+_{\ell_X}(n) + a^-_{\ell_X}(n) e^{4\pi n y} \right) \\ = 2\pi \left( \int_0^{\sqrt{v}} e^{4\pi m w^2} dw \right) \left( \sum_{n <0} a^+_{\ell_X}(n) e(n r/\alpha)\right).
\end{multline*}
\end{lemma}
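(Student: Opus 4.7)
The plan is to compute the pointwise limit of $g(n,T)$ as $T\to\infty$ for each $n\neq 0$ and then to interchange limit and sum on the left-hand side. Abbreviate $A(w,T):=\tfrac{2\sqrt{\pi vm}}{w}+\tfrac{\pi n T w}{2\sqrt{\pi vm}\alpha}$ for the argument of the complementary error function appearing in the integrand of $g(n,T)$.

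For $n>0$ we have $A(w,T)\geq\tfrac{\pi n T}{2\sqrt{\pi vm}\alpha}\,w$ on $[1,\infty)$, and the classical estimate $\erfc(t)=O(e^{-t^2}/t)$ as $t\to+\infty$ forces both summands inside the bracket of $g(n,T)$ to be $O(e^{-c n^2 T^2 w^2})$. Therefore $g(n,T)=O(e^{-c n^2 T^2})$, and combined with the at most polynomial growth of $a^+_{\ell_X}(n)$, this contribution to the sum vanishes as $T\to\infty$. The antiholomorphic coefficients $a^-_{\ell_X}(n)$ (only present for $n<0$) are multiplied by the exponentially decaying factor coming from evaluating $e(n\bar z/\alpha)$ at the boundary height, so these terms vanish for the same reason.

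For $n<0$ write $n=-|n|$ and set $B:=\tfrac{\pi|n|T}{2\sqrt{\pi vm}\alpha}$. Once $T$ is so large that $B>2\sqrt{\pi vm}$, the quantity $A(w,T)=\tfrac{2\sqrt{\pi vm}}{w}-Bw$ is negative throughout $[1,\infty)$, and the identity $\erfc(A)=2-\erfc(-A)$ together with $\tfrac{4\pi vm}{w^2}-A^2=4\sqrt{\pi vm}B-B^2w^2$ decomposes the integrand of $g(n,T)$ as
\begin{align*}
\Bigl[\,e^{4\sqrt{\pi vm}B-B^2w^2}-\tfrac{4\pi\sqrt{vm}}{w}\,e^{4\pi vm/w^2}+\tfrac{2\pi\sqrt{vm}}{w}\,e^{4\pi vm/w^2}\,\erfc(-A)\,\Bigr]\tfrac{dw}{w}.
\end{align*}
The substitution $u=Bw$ turns the integral of the first summand into $e^{4\sqrt{\pi vm}B}\int_B^{\infty}e^{-u^2}\tfrac{du}{u}$, which is $O(e^{-B^2/2})$ as $T\to\infty$; the elementary bound $\erfc(-A)\leq e^{-A^2}$ controls the third summand in exactly the same way. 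The middle summand is $T$-independent, and hence
\[
\lim_{T\to\infty}g(n,T)\;=\;-4\pi\sqrt{vm}\int_1^{\infty}\frac{e^{4\pi vm/w^2}}{w^2}\,dw.
\]

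To finish, I will evaluate this integral by the successive substitutions $s=1/w$ and $w=s\sqrt{v}$, which transform it into $\int_0^1 e^{4\pi vm s^2}\,ds$ and then into $\tfrac{1}{\sqrt{v}}\int_0^{\sqrt{v}}e^{4\pi m w^2}\,dw$. Therefore for every $n<0$,
\[
\lim_{T\to\infty}g(n,T)\;=\;-4\pi\sqrt{m}\int_0^{\sqrt{v}}e^{4\pi m w^2}\,dw.
\]
Multiplying by $-\tfrac{1}{2\sqrt{m}}$ and summing against $a^+_{\ell_X}(n)\,e(nr/\alpha)$ over the (effectively finite) range of $n<0$, $a^+_{\ell_X}(n)=0$ for $n\ll 0$, yields the claimed identity. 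The main technical point is justifying the interchange of limit and sum; this reduces to the uniform-in-$T$ bounds assembled above, so no deeper obstacle arises.
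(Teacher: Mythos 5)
Your proof is correct and takes essentially the same route as the paper, which simply invokes $\erfc(t)=O(e^{-t^2})$ as $t\to+\infty$ and $\erfc(t)\to 2$ as $t\to-\infty$ to identify the pointwise limit of $g(n,T)$ and leaves the bookkeeping to the reader; your explicit decomposition for $n<0$ and the substitutions $s=1/w$, $w=s\sqrt{v}$ fill in exactly what the paper omits. One small slip: the coefficients $a^+_{\ell_X}(n)$ for $n>0$ of a harmonic weak Maass form grow subexponentially (of order $e^{C\sqrt{n}}$), not polynomially, but since your bound $g(n,T)=O(e^{-cn^2T^2})$ is Gaussian in $nT$ this does not affect the interchange of limit and sum.
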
 \qed

Summarizing, in view of \eqref{gisela}, using Lemmas~\ref{l:111}, \ref{l:112}, and Theorem~\ref{reg-def2} we finally obtain the asymptotic behavior of \eqref{hard-term}. Namely,

\begin{lemma}\label{partialeta-behavior}
The asymptotic behavior of
\[
\frac{1}{2\sqrt{m}} \int_{c_X^T} f(z) dz_X +  \frac{1}{2\pi i} \int_{\partial M_{T}  } f ( z) \sum_{ \g \in \bar\G} \partial \eta(X, \tau, \g  z) e(-m\tau)
\]
as $T \to \infty$ is given by
\begin{multline*}
\frac{1}{2} \int_{c_X}^{reg} f(z) dz_X \\
-\frac{1}{2 \sqrt{m}} \left( \log \tfrac{2 \sqrt{\pi v m}}{T} + \tfrac12 \log 2 + \tfrac14 \g- \sqrt{\pi} \int_0^{2\sqrt{\pi vm}} e^{w^2} \erfc(w)dw \right) (a^+_{\ell_X}(0)+ a^+_{\ell_{-X}}(0))  \\
+2\pi \left( \int_0^{\sqrt{v}} e^{4\pi m w^2} dw \right) \left( \sum_{n <0} a^+_{\ell_X}(n)e(nRe(c(X))/\alpha) + a^+_{\ell_{-X}}(n) e(nRe(c(-X))/\alpha)\right).
\end{multline*}
\end{lemma}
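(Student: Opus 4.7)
The plan is to combine \eqref{gisela}, Lemma \ref{l:111}, Lemma \ref{l:112}, and Theorem \ref{reg-def2}, whose outputs assemble exactly into the claimed asymptotic.

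First I would use \eqref{gisela} to replace $\frac{1}{2\pi i}\int_{\partial M_T} f(z)\sum_{\gamma\in\bar\Gamma}\partial\eta(X,\tau,\gamma z)$ by the sum of the two local cusp integrals at $\ell_X$ and $\ell_{-X}$, since all other boundary arcs contribute exponentially small amounts thanks to the rapid decay of $\partial\eta$ away from $c_X$ (Lemma \ref{eta-sing}(ii)). At each of the two cusps, Lemma \ref{l:111} then decomposes its contribution into three kinds of terms: (i) a Fourier series weighted by $g(n,T)$, (ii) a digamma integral along a horizontal segment at height $T$, and (iii) a $T$-dependent constant-plus-$\log T$ piece proportional to $a^+_{\ell_{\pm X}}(0)$.

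Next I would handle (i) using Lemma \ref{l:112}: because $\erfc(t)$ decays super-exponentially at $+\infty$ and tends to $2$ at $-\infty$, and because the $f^-$ Fourier coefficients contribute $a^-_{\ell_X}(n)e^{4\pi n y}$ which vanishes at $y=T$ as $T\to\infty$, only the negative-index holomorphic Fourier coefficients of $f$ at the two cusps survive. This reproduces exactly the term $2\pi\bigl(\int_0^{\sqrt v} e^{4\pi m w^2}\,dw\bigr)\bigl(\sum_{n<0} a^+_{\ell_X}(n)e(nr_+/\alpha) + a^+_{\ell_{-X}}(n)e(nr_-/\alpha)\bigr)$ appearing in the stated asymptotic.

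The decisive step is to recognize the combination of the truncated period $\frac{1}{2\sqrt m}\int_{c_X^T} f(z)\,dz_X$ with the two digamma integrals coming from (ii) as $\frac{1}{2}\int^{reg}_{c_X} f(z)\,dz_X$, up to errors vanishing as $T\to\infty$. This is precisely the content of Theorem \ref{reg-def2} applied with $T_\pm$ equal to the truncation height in the local coordinate at each cusp (and with any fixed base points $c_\pm$): the truncated period plus the two digamma integrals reconstruct the regularized period, while the remaining $f^-$ tail integrals $\int_{c_\pm}^\infty f^-_{\pm X}(iy+r_\pm)\tfrac{dy}{y}$ converge absolutely and are absorbed into the constants. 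The pieces from (iii), gathered at both cusps and multiplied by $-\tfrac{1}{2\sqrt m}$, then yield the claimed constant/$\log T$ contribution.

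The main obstacle will be the careful bookkeeping in this last step: matching the normalization of the digamma integral in Lemma \ref{l:111}, whose argument is the local coordinate at $\ell_X$ rescaled by $\alpha_{\ell_X}$, with the form in Theorem \ref{reg-def2} written in terms of $f(\alpha_X z+r_+)$; and verifying that the $\log(2\sqrt{\pi v m}/T)$ contribution from Lemma \ref{l:111} combines with the implicit $\log T$ coming from $\psi(iT/\alpha)+\psi(1-iT/\alpha)=2\log(T/\alpha)+O(1/T)$ to leave exactly the $T$-independent residue $\log(2\sqrt{\pi v m}) + \tfrac12\log 2 + \tfrac14\gamma - \sqrt\pi\int_0^{2\sqrt{\pi v m}} e^{w^2}\erfc(w)\,dw$ multiplying $(a^+_{\ell_X}(0)+a^+_{\ell_{-X}}(0))$ in the statement.
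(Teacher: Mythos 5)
Your proposal is correct and follows essentially the same route as the paper, which proves this lemma precisely by combining \eqref{gisela} with Lemmas~\ref{l:111} and \ref{l:112} and then invoking Theorem~\ref{reg-def2} to assemble the truncated period and the digamma boundary integrals into the regularized period. The bookkeeping issues you flag (matching the $\alpha_{\ell_X}$-normalizations and the cancellation of the $\log T$ terms against $\log(2\sqrt{\pi v m}/T)$) are exactly the checks implicit in the paper's summary argument.
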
\qed

Combining Lemma~\ref{partialeta-behavior} with Lemma~\ref{partialF-behavior} and using Lemma~\ref{big-current-split-hyperbolic1} completes the proof of Proposition~\ref{key-split-hyperbolic}!

\subsection{The parabolic Fourier coefficient}

Let $f \in H^+_0(\G)$. For $m=0$ we have
\begin{align*}
\int^{reg}_{M} f(z) \theta_{0,h}(\tau,z) d\mu(z)  = \int^{reg}_M f(z) d\mu(z)  +
 \int_M^{reg} \sum_{\substack{X \in L_{0,h} \\ X \ne 0}} f(z) \varphi_0(X,\tau,\g z) d\mu(z),
\end{align*}
where
\[
 \int^{reg}_M f(z) d\mu(z) = \lim_{T \to  \infty} \int_{M_T} f(z) d\mu(z)
\]
as in \cite{BFCrelle} and
\begin{multline}\label{reg-int-0}
\int_M^{reg} f(z)\sum_{\substack{X \in L_{0,h} \\ X \ne 0}} \varphi_0(X,\tau,\g z) d\mu(z)  \\ = \lim_{T \to \infty} \sum_{\ell \in \G \back \Iso(V)} \Big[ \int_{M_T} f(z)\sum_{\g \in \G/\G_{\ell}} \sum_{\substack{X \in \ell \cap L_{0,h} \\ X \ne 0}} \varphi_0(X,\tau,\g z) d\mu(z)  - \frac{a^+_{\ell}(0) \eps_{\ell}b_{\ell}(0,h)}{\sqrt{N}} \log T \Big].
\end{multline}
by \eqref{reg-int-m}. Note that $b_{\ell}(0,h) =1$ if and only if $\bar h =0$. Otherwise $b_{\ell}(0,h)=0$.

For $Q(X)=0$, the function $\eta(X,z)$ and its derivatives have no singularities in $D$ so that the equation $dd^c \eta(X,z) = \varphi_0(X,z)$ holds unequivocally. The following lemma is therefore immediate.

\begin{lemma}\label{current-parabolic}
Let $f \in H_0^+(\G)$. Fix a cusp $\ell$. Then for $T$ sufficiently large we have
\begin{multline*}
\int_{M_T}  f(z)  \sum_{\g \in \G/\G_{\ell}} \sum_{\substack{X \in \ell \cap L_{0,h} \\ x \ne 0}} \varphi_0(X,\tau,\g z) d\mu(z) \\ = \frac{1}{2\pi i} \Big[ \int_{\partial M_{T}} f(z) \sum_{\g \in \G/\G_{\ell}} \sum_{\substack{X \in \ell \cap L_{0,h} \\ X \ne 0}} \partial \eta (X,\tau,\g z)  + \int_{\partial M_{T}} \bar{\partial} f(z) \sum_{\g \in \G/\G_{\ell}} \sum_{\substack{X \in \ell \cap L_{0,h} \\ X \ne 0} }  \eta (X,\tau,\g z) \Big].
\end{multline*}

\end{lemma}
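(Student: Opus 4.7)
The plan is to apply the Poincar\'e-Lelong-style identity \eqref{P-L-eq} to each fixed nonzero isotropic vector $X\in\ell\cap L_{0,h}$ together with each $\gamma\in\G/\G_\ell$, and then to invoke Stokes' theorem on the truncated fundamental domain $M_T$. The crucial observation is that for $Q(X)=0$ with $X\neq 0$, the function $\eta(X,\tau,\cdot)$ is globally \emph{smooth} on $D$: by Lemma~\ref{eta-sing}, singularities of $\eta$ appear only when $Q(X)<0$ (at the point $D_X$) or $Q(X)>0$ (along the geodesic $c_X$). Consequently, Proposition~\ref{new-xi-prop} gives the identity $dd^c\eta(X,z)=\varphi_0(X,z)\,d\mu(z)$ on all of $D$, with no $\delta$-distribution corrections. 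This is exactly the feature that makes the parabolic Fourier coefficient much simpler than its elliptic and non-split hyperbolic counterparts.

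First I would invoke \eqref{P-L-eq} termwise. Since $f\in H_0^+(\G)$ is harmonic, $\Delta_0 f=0$; in view of $\Delta_0=-4y^2\partial_z\partial_{\bar z}$, this is equivalent to $\partial\bar\partial f=0$. Hence the interior term $\partial\bar\partial f\cdot\eta(X)$ on the right-hand side of \eqref{P-L-eq} vanishes identically, leaving an exact two-form identity of the form
\[
2\pi i\, f(z)\,\varphi_0(X,\tau,\gamma z)\,d\mu(z) = d\bigl(f(z)\,\partial\eta(X,\tau,\gamma z)\bigr) - d\bigl(\bar\partial f(z)\cdot\eta(X,\tau,\gamma z)\bigr)
\]
for every pair $(X,\gamma)$. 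Integration of this identity over $M_T$, together with Stokes' theorem, converts the right-hand side into a pair of boundary integrals over $\partial M_T$, and dividing by $2\pi i$ yields the lemma's formula for a single pair $(X,\gamma)$ (up to routine bookkeeping of signs and boundary orientations).

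Next, I would sum the resulting identity over $X\in\ell\cap L_{0,h}\setminus\{0\}$ and $\gamma\in\G/\G_\ell$, and interchange this summation with $\int_{M_T}$, with $d$, and with the boundary integration. Absolute convergence is driven by the Gaussian decay of $\varphi_0$, $\eta$, and $\partial\eta$: for $Q(X)=0$ the factors $e^{2\pi(X,X)t}\equiv 1$ and $e(Q(X)\tau)\equiv 1$ disappear from the integrand defining $\eta$, and the remaining integrand decays at the same rate as the corresponding piece of $\theta_{0,h}(\tau,z)$, whose behavior at the cusps is controlled by Proposition~\ref{theta-decay}(i). For $T$ sufficiently large the excised cuspidal disks in the construction of $M_T$ are disjoint, so these estimates apply uniformly on $M_T$ and on each boundary component of $\partial M_T$.

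The main (and relatively mild) obstacle will be confirming this uniform absolute convergence of $\sum_{\gamma,X}\eta(X,\tau,\gamma z)$ and $\sum_{\gamma,X}\partial\eta(X,\tau,\gamma z)$ on $\partial M_T$, together with legitimizing the termwise differentiation required to commute $d$ with the sum\,---\,essentially the same type of convergence statement that underpins the smoothness of $\Theta_L(\tau,z,\varphi_0)$ itself. Once this is in hand, assembling the individual $(X,\gamma)$ identities and applying Stokes' theorem termwise delivers the stated formula.
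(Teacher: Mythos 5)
Your proposal is correct and follows essentially the same route as the paper: since $\eta(X,\tau,\cdot)$ has no singularities on $D$ when $Q(X)=0$, $X\neq 0$, the identity \eqref{P-L-eq} holds everywhere, the interior term drops because $f$ is harmonic ($\partial\bar\partial f=0$), and Stokes' theorem on the fixed truncated domain $M_T$ gives the two boundary integrals, with the termwise summation over $\gamma$ and $X$ justified by locally uniform convergence on the compact set $M_T\cup\partial M_T$. This is exactly why the paper calls the lemma immediate, so no further comment is needed.
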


As before, the second term on the right hand side in Lemma~\ref{current-parabolic} vanishes in the limit. In view of \eqref{reg-int-0} the following proposition gives the constant coefficient in Theorem~\ref{th:Main-Maass}.

\begin{proposition}
We can write $ \ell \cap (L+h) = \Z \beta_{\ell} u_{\ell} + k_{\ell}u_{\ell}$ for some $0 \leq k_{\ell} < \beta_{\ell}$. Then the asymptotic behavior as $T\to\infty$ of
\[
\frac{1}{2\pi i}  \int_{\partial M_{T}} f(z) \sum_{\g \in \G/\G_{\ell}} \sum_{\substack{X \in \ell \cap L_{0,h} \\ X \ne 0}} \partial \eta (X,\tau,\g z)
\]
is given by
\[
-a^+_{\ell}(0) \frac{\eps_{\ell}}{2\sqrt{N}} \left[  \log(4 \beta_{\ell}^2 \pi v) +\g + \psi(k_{\ell}/\beta_{\ell})+\psi(1-k_{\ell}/\beta_{\ell}) - 2\log T \right].
\]
Here we (formally) set $\psi(0)=-\g$, which is justified since $-\g$ is the constant term of the Laurent expansion of $\psi$ at $0$.
\end{proposition}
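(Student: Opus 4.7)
I would follow the template of the non-split hyperbolic case (Lemmas~\ref{l:key-formula}--\ref{partialeta-behavior}), with the role of the closed geodesic $c_X$ replaced by the cusp $\ell$. First, using the $G$-equivariance of $\partial \eta$ the double sum can be rewritten as
\[
\sum_{Y \in L_{0,h}\cap(\Gamma\cdot\ell),\, Y\neq 0} \partial\eta(Y,\tau,z).
\]
An estimate based on the $\erfc$-factor in the formula for $\partial_z\eta$ shows that the only contribution of order larger than $O(1/T)$ as $T\to\infty$ comes from the horocycle at the cusp $\ell$ itself and from those $Y$ which actually lie in $\ell$; the remaining terms are exponentially small, exactly as in the reduction producing \eqref{gisela}. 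Thus it suffices to analyze
\[
\frac{1}{2\pi i}\int_{\partial M_{T,\ell}}f(z)\sum_{X\in \ell\cap L_{0,h},\, X\neq 0}\partial\eta(X,\tau,z).
\]

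After applying $\sigma_\ell$ to localize at $\ell_0$, I would parametrize the summation set as $X_n = (\beta_\ell n + k_\ell)u_0$ with $n\in\Z$ (and $n\neq 0$ when $k_\ell=0$). The direct computations $(X_n,X(z))=-\sqrt{N}(\beta_\ell n+k_\ell)/y$, $R(X_n,z)=N(\beta_\ell n+k_\ell)^2/(2y^2)$, and $(X_n,X'(z))=-i\sqrt{N}(\beta_\ell n+k_\ell)/(2y^2)$, substituted into the formula for $\partial_z\eta$ stated just before Lemma~\ref{eta-sing}, yield
\[
\frac{1}{2\pi i}\partial\eta(X_n,z)=-\frac{1}{2\sqrt{N}\,|\beta_\ell n+k_\ell|}\erfc\!\left(\frac{\sqrt{\pi vN}\,|\beta_\ell n+k_\ell|}{y}\right)dz.
\]
Integration along the horocycle $z=\sigma_\ell(x+iT)$, $0\leq x<\alpha_\ell$, with the orientation inherited from $\partial M_T$, singles out the Fourier coefficient $a_\ell^+(0)$ of $f$ (the non-constant modes integrate to zero in $x$); the complementary $\bar\partial f\cdot \eta$ boundary contribution arising from the Stokes identity in Lemma~\ref{big-current-split-hyperbolic1} vanishes in the limit because $\bar\partial f$ decays exponentially at the cusp while $\eta(X,z)$ grows only linearly in $y$.

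The heart of the proof is then the asymptotic analysis of
\[
S(T):=\sum_{\substack{n\in\Z\\ \beta_\ell n+k_\ell\neq 0}}\frac{\erfc(\sqrt{\pi vN}\,|\beta_\ell n+k_\ell|/T)}{|\beta_\ell n+k_\ell|}
\]
as $T\to\infty$. Setting $c=\sqrt{\pi vN}\beta_\ell/T$ and $a=k_\ell/\beta_\ell$, the Mellin transform $\int_0^\infty c^{s-1}\erfc(c)\,dc=\Gamma(\tfrac{s+1}{2})/(s\sqrt\pi)$, combined with the reindexing of $\ell\cap L_{0,h}$ as $\beta_\ell\bigl[(\Z_{\geq 0}+a)\sqcup(\Z_{\geq 0}+1-a)\bigr]$, yields
\[
\int_0^\infty c^{s-1}\beta_\ell\, S\,dc=\frac{\Gamma(\tfrac{s+1}{2})}{s\sqrt\pi}\bigl[\zeta(s+1,a)+\zeta(s+1,1-a)\bigr],
\]
with the convention that the bracket equals $2\zeta(s+1)$ in the case $k_\ell=0$. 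Using the Laurent expansions $\zeta(s+1,a)=1/s-\psi(a)+O(s)$ and $\Gamma(\tfrac{s+1}{2})/\sqrt\pi=1-(\gamma+2\log 2)s/2+O(s^2)$, the product has a double pole at $s=0$ (whose inverse Mellin is $-2\log c$) together with a simple pole of residue $-\psi(a)-\psi(1-a)-\gamma-2\log 2$. Re-inserting $c=\sqrt{\pi vN}\beta_\ell/T$ and multiplying by the prefactor $\alpha_\ell a_\ell^+(0)/(2\sqrt{N})$ produces the claimed closed form, the convention $\psi(0)=-\gamma$ correctly accommodating the limiting case $k_\ell=0$. The main technical obstacle is this last Mellin/Hurwitz-zeta calculation, where the $-2\log T$ divergence and the explicit constants $\log(4\beta_\ell^2\pi v)$, $\gamma$, and $\psi(k_\ell/\beta_\ell)+\psi(1-k_\ell/\beta_\ell)$ must be simultaneously extracted from the double-pole structure while keeping track of the normalization from the lattice data $\beta_\ell$, $\alpha_\ell$, $N$.
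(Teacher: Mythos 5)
Your argument is correct and reaches the same intermediate constants as the paper, but the central asymptotic is extracted by a different device. The paper's proof, like yours, reduces to the cusp $\ell$ (only the coset $\g=1$ and the constant Fourier coefficient $a^+_\ell(0)$ survive, since the summand depends only on $y$), and it also parametrizes $\ell\cap(L+h)$ by $(\beta_\ell n+k_\ell)u_0$; your closed $\erfc$-formula for $\tfrac{1}{2\pi i}\partial\eta(X_n,z)$ is exactly the paper's integral representation $-\tfrac{\sqrt v}{2y}\int_1^\infty e^{-\pi(n\beta+k)^2vNt/y^2}\,t^{-1/2}dt\,dz$ rewritten. The difference lies in how the divergent sum is handled: the paper inserts an auxiliary parameter $s$ into the $t$-integral, splits $\int_1^\infty=\int_0^\infty-\int_0^1$, evaluates the first piece in terms of $\Gamma(s+1/2)$ and Hurwitz zeta functions as in \eqref{Omega-01}, and uses the theta transformation formula (Poisson summation) on the second piece \eqref{Omega-02} to cancel the $1/s$ poles before setting $s=0$; you instead take a Mellin transform in $c\propto 1/T$ of the full $\erfc$-sum and read the $-2\log c$ and constant terms off the double and simple poles of $\Gamma(\tfrac{s+1}{2})\bigl[\zeta(s+1,a)+\zeta(s+1,1-a)\bigr]/(s\sqrt\pi)$ at $s=0$. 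The two routes are essentially zeta-regularizations of the same sum; yours avoids the theta-inversion step at the cost of having to justify Mellin inversion (polynomial growth of the Hurwitz zeta versus Gamma decay on vertical lines), which you should at least mention. Two minor points: you gloss over the orientation of $\partial M_{T,\ell}$, which fixes the overall sign (the paper is equally terse here); and your constant $\log(4\pi vN\beta_\ell^2)$ agrees with the paper's own expansion \eqref{Omega-01}, so the absence of the factor $N$ in the displayed statement is an inconsistency of the paper, not a defect of your argument.
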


\begin{proof}

We have
\[
\sum_{\substack{X \in \ell \cap L_{0,h} \\ X \ne 0}} \partial \eta (X,\tau,z) =  \sideset{}{'}{\sum}_{n=-\infty}^{\infty}
\partial \eta (nu_{\ell}+h_{\ell}),  z).
\]
Here $\sum'$ indicates that we omit $n=0$ in the sum in the case of the trivial coset. We can assume that $\ell$ corresponds to the cusp $\infty$ so that $u_{\ell} = \kzxz{0}{\beta}{0}{0}$ with $\beta= \beta_{\ell}$ and $h_{\ell} = \kzxz{0}{k}{0}{0}$ for some $0\leq k=k_{\ell} < \beta$. We easily see
\[
\frac{1}{2\pi i} \partial \eta (n X_{\ell} + h_{\ell},z) = - \frac{\sqrt{v}}{2y} \left(\int_1^{\infty} e^{-\pi(n\beta+k)^2 vNt/y^2} \frac{dt}{\sqrt{t}} \right) dz.
\]
Hence $\sum_{\g \in \G/\G_{\ell}} \sum_{\substack{X \in \ell \cap L_{0,h} \\ X \ne 0}} \partial \eta (X,\tau,\g z)$ is rapidly decaying at all cusps except $\infty$, and for that cusp in the limit all terms in the sum over $\G/\G_{\ell}$ vanish except $\g=1$. We set
\begin{align}
\Omega(s) &=  - \frac{\sqrt{v}}{2y} \sideset{}{'}{\sum}_{n=-\infty}^{\infty}   \int_1^{\infty} e^{-\pi(n\beta+k)^2 vNt/y^2} t^s\frac{dt}{\sqrt{t}} \\
 & = - \frac{\sqrt{v}}{2y} \sideset{}{'}{\sum}_{n=-\infty}^{\infty}   \int_0^{\infty} e^{-\pi(n\beta+k)^2 vNt/y^2} t^s\frac{dt}{\sqrt{t}} + \frac{\sqrt{v}}{2y} \sideset{}{'}{\sum}_{n=-\infty}^{\infty}   \int_0^{1} e^{-\pi(n\beta+k)^2 vNt/y^2} t^s\frac{dt}{\sqrt{t}}. \label{Omega-0}
\end{align}
For the first term in \eqref{Omega-0}, we have
\begin{multline}\label{Omega-01}
 - \frac{1}{2\beta\sqrt{\pi N}} (\pi \beta^2vN/y^2)^{-s} \G(s+1/2)\left(H(2s+1,k/\beta)+ H(2s+1,1-k/\beta) \right) \\
 =\frac{1}{2\beta\sqrt{N}}\left[ \frac{-1}{s} + \log(\pi \beta^2vN/y^2) +2\log 2+\g + \psi(k/\beta)+\psi(1-k/\beta) \right] + O(s).
  \end{multline}
 Here $H(s,w) = \sum_{n=0}^{\infty} (n+w)^{-s}$ is the Hurwitz zeta function, where for $w=0$ we set $H(s,w) = \zeta(s)$. Then $H(s,w)$ has a simple pole at $s=1$ with constant term $-\psi(w)$ in the Laurent expansion. With our convention for $\psi(0)$ above, \eqref{Omega-01} also holds for $k=0$.

 For the second term in \eqref{Omega-0}, we substitute $t \to 1/t$ and apply the theta transformation formula to obtain
\begin{align}
\label{Omega-02}
&\frac{1}{2\beta\sqrt{N}} \int_1^{\infty} \sum_{n \in \Z} e^{2\pi i nk/\beta} e^{-\pi y^2n^2t/\beta^2vN} t^{-s} \frac{dt}{t} - \delta_{k,0} \frac{\sqrt{v}}{y} \int_1^{\infty} t^{-s-3/2} dt \\
\nonumber
&= \frac{1}{2\beta\sqrt{N}}\frac{1}{s} + g(y) + O(s),
\end{align}
for a function $g$ with $\lim_{y \to \infty} g(y) =0$. Combining \eqref{Omega-01} and \eqref{Omega-02} gives an expression for $\Omega(0) dz = \frac{1}{2\pi i} \partial \eta (n X_{\ell} + h_{\ell},z)$ which we can easily integrate over $\partial M_{T,\ell}$ to obtain the lemma.
\end{proof}

\end{document}